\newcommand\cev[1]{\overset{{}_{\shortleftarrow}}{#1}}
\newcommand\undervec[1]{\underaccent{\vec}{#1}}
\newcommand{\op}[1]{\prescript{o}{}{#1}}
\newcommand{\pp}[1]{\prescript{p}{}{#1}}
\newcommand{\graph}[1]{(\mathrm {gph}\ #1)}
\newcommand{\midb}{\;\middle|\;}
\newcommand{\one}{\mathbbm 1}
\newcommand{\cadlag}{\mbox{c\`adl\`ag}} 
\newcommand{\cad}{\mbox{c\`ad}}
\newcommand{\lag}{\mbox{l\`ag}}
\newcommand{\caglad}{\mbox{c\`agl\`ad}} 
\newcommand{\cag}{\mbox{c\`ag}}
\newcommand{\lad}{\mbox{l\`ad}}
\def\reals{\mathbb{R}}
\def\rationals{\mathbb{Q}}
\def\naturals{\mathbb{N}}
\def\uball{\mathbb{B}}
\def\ereals{\overline{\mathbb{R}}}
\def\cone{\mathop{\rm cone}}
\def\interior{\mathop{\rm int}\nolimits}
\def\comp{\raise 1pt \hbox{$\scriptstyle\circ$}}
\def\argmin{\mathop{\rm argmin}\limits}
\def\dom{\mathop{\rm dom}\nolimits}
\def\rge{\mathop{\rm rge}}
\def\naturals{\mathbb{N}}
\def\upto{{\raise 1pt \hbox{$\scriptstyle \,\nearrow\,$}}}
\def\downto{{\raise 1pt \hbox{$\scriptstyle \,\searrow\,$}}}
\def\cl{\mathop{\rm cl}\nolimits}
\def\co{\mathop{\rm co}}
\def\epi{\mathop{\rm epi}}
\def\tos{\rightrightarrows}
\def\FF{(\F_t)_{t\ge 0}}
\def\A{{\cal A}}
\def\B{{\cal B}}
\def\C{{\cal C}}
\def\D{{\cal D}}
\def\F{{\cal F}}
\def\LL{{\mathbb L}}
\def\M{{\cal M}}
\def\O{{\cal O}}
\def\T{{\cal T}}
\def\X{{\cal X}}
\def\nn{\mathbb{N}}
\newcommand{\rpr}{\mathbb{P}} 
\newtheorem{theorem}{Theorem}
\newtheorem{lemma}[theorem]{Lemma}
\newtheorem{corollary}[theorem]{Corollary}
\newtheorem{example}{Example}
\theoremstyle{definition}
\newtheorem{assumption}{Assumption}
\theoremstyle{empty}
\begin{document}
\title{Convex integral functionals of $\cadlag$ processes}
\author{Ari-Pekka Perkki\"o\thanks{Department of Mathematics, Ludwig Maximilians Universit\"at M\"unchen, Theresienstr. 39, 80333 M\"unchen, Germany.}
\and
Erick Trevi\~no-Aguilar\thanks{Department of Economics and Finance, University of Guanajuato, Lascur\'ain de Retana 5,  36000, Guanajuato, M\'exico.}}

\maketitle

\begin{abstract}
This article characterizes conjugates and subdifferentials of convex integral functionals over linear spaces of $\cadlag$ stochastic processes. The approach is based on new measurability results on the Skorokhod space and  new interchange rules of integral functionals that are developed in the article. The main results provide a general approach to apply convex duality in a variety of optimization problems ranging from optimal stopping to singular stochastic control and mathematical finance.
\end{abstract}

\noindent\textbf{Keywords.} $\cadlag$ stochastic processes; convex conjugate; integral functional; normal integrand,  set-valued analysis;  
\newline
\newline
\noindent\textbf{AMS subject classification codes.} 46N10, 60G07

\section{Introduction}

We fix a complete stochastic base $(\Omega,\F,\FF,P)$ satisfying the usual hypotheses with a terminal time $T>0$ which we allow to be $+\infty$. For a space $\D$ of adapted processes with $\cadlag$  paths  (the french abbreviation for right continuous  with left limits), we study conjugates of convex integral functionals
\begin{equation}\label{eq:F}
F(y)= E\int_{[0,T]} h(y) d \mu+\delta_{\D(S)}(y)
\end{equation}
for a convex normal integrand $h:\Omega \times [0,T] \times \reals^d\to\ereals$, a nonnegative random measure $\mu$ and hard constraints $\D(S)$. Here $\D(S)$ is a subset of $\D$ consisting of almost sure selections of the image closure $S$ of $\dom h_t(\omega)=\{x\mid h_t(x,\omega)<+\infty\}$ and  $\delta_{\D(S)}(y)$ takes the value zero if $y$ belongs to $\D(S)$ and $+\infty$ otherwise. 

For a large class of Banach spaces of adapted $\cadlag$ processes, the dual space can be identified with pairs of random measures under the bilinear form
\[
\langle y,(u,\tilde u)\rangle=E\left[\int ydu + \int y_-d\tilde u\right],
\]
where $y_-$ is the left continuous version of $y$; see \cite{ara14,dm82,pp18b}. This leads us to analyze a larger class of functionals
\begin{equation}\label{eq:hatF}
\hat F(y)= E\left[ \int_{[0,T]} h(y) d \mu+ \int_{[0,T]} \tilde h(y_-) d \tilde\mu\right]+\delta_{\D(S)}(y)+\delta_{\D_l(\tilde S)}(y_-),
\end{equation}
where $\tilde h$ is another convex normal integrand, $\tilde\mu$  another non negative random measure, $\D_l$ a space of $\caglad$ (the french abbreviation for left continuous  with right limits) stochastic processes and $\tilde S=\cl \dom(\tilde h)$.

The main results of the paper, Theorems~\ref{thm:cifcp}  and  \ref{thm:cifcpcor}, characterize the conjugates and subdifferentials of $F$ and $\hat F$. Our motivation arises from various optimization problems in stochastic control and mathematical finance. Applications to stochastic singular control are already presented in \cite{pp18d} which dealt with integral functionals on regular processes (processes that are optional projections of continuous processes).  Examples in Section~\ref{sec:applications} illustrate how our results allow to model, e.g.,  bid-ask spreads and currency markets in mathematical finance while an  application of our results to optimal stopping is given in \cite{pp18c}. Applications to partial hedging of American options will be presented in a forthcoming article by the authors. Further applications to finance and singular stochastic control going beyond \cite{pp18d} will be given elsewhere.


In Section~\ref{sec:ni}, we recall basic facts from convex analysis and the theory of integral functionals. Since we work later on with the Skorokhod space where the functionals $F$ and $\hat F$ are not lower semicontinuous, we also present an extension of ``an interchange rule between minimization and integration'' to jointly measurable integrands on a non necessarily topological vector Suslin space. Another new interchange rule is provided in Section~\ref{sec:det} for integral functionals on the space $D$ of \cadlag\ functions. Such results go back to the seminal paper of \cite{Roc68} in decomposable spaces of $\reals^d$-valued measurable functions. Extensions to Suslin-valued functions are studied in  \cite{val75} and to non-necessarily decomposable spaces, e.g., in \cite{bv88,pp18a,roc71}. We build on their results.

Section~\ref{sec:graph} addresses the complication that measurable selection theorems behind the interchange rules require Suslin spaces, which $D$ under the supremum norm is not. However, $D$ becomes a Suslin space under the Skorokhod topology. A drawback is that the hard constraints are no longer closed in this topology. The main result of this section, which gives graph measurability of a set-valued mapping on $D$, and hence that of the hard constraints, has a novel method of the proof since we can not use standard characterizations of measurability based on closed-valuedness.

Before presenting the main results in Section~\ref{sec:main}, we develop, in Section~\ref{sec:ifcp},  the crucial interchange rule on a space of $\cadlag$ processes.  Stochastic settings for interchange rules and convex duality  has been recently developed in   \cite{pp18d} and \cite{pp18a} on spaces of regular processes and processes of bounded variation.  Our interchange rule can be seen as an extension of one in \cite{pp18d} from the class of regular processes  to classes of $\cadlag$ processes. 

One of the main assumptions in our main results is a sort of Michael representation (see \cite{rw98}) of a set-valued mapping $S$ consisting of \cadlag\ selections. We analyze this condition in terms of standard continuity properties of set-valued mappings in Section~\ref{sec:SVA}. This section is of independent interest in set-valued analysis. In Section~\ref{sec:applications}, we demonstrate how these results lead to straight-forward applications of the main theorems.

\section{Convex conjugates and normal integrands }\label{sec:ni}
In this section we recall some fundamentals from convex analysis and the theory of normal integrands. We also give a minor extension of an interchange rule for integral functionals on decomposable spaces that we will need in the sequel. 

When $X$ is in separating duality with another linear space $V$, the {\em conjugate} of an extended real-valued convex function $g$ on $X$ is the extended real-valued function $g^*$ on $V$ defined by
\[
g^*(v) = \sup_{x\in X}\{\langle x,v\rangle - g(x)\}.
\]
A vector $v\in V$ is a {\em subgradient} of $g$ at $x$ if
\[
g(x')\ge g(x) + \langle x'-x,v\rangle\quad\forall x'\in X.
\]
The {\em subdifferential} $\partial g(x)$ is the set of all subgradients of $g$ at $x$. We often use the property that $v\in\partial g(x)$ if and only if
\[
g(x)+g^*(v)=\langle x,v \rangle.
\]
The {\em recession function} of a closed proper convex function $g$ is defined by
\[
g^\infty(x)=\sup_{\alpha>0}\frac{g(\bar x+\alpha x)-g(\bar x)}{\alpha},
\]
where the supremum is independent of the choice of $\bar x\in\dom g=\{x \in X \mid g(x)<\infty \}$; see \cite[Corollary~3C]{roc66}. By \cite[Corollary 3D]{roc66},
\begin{equation}\label{eq:domrec}
\sigma_{\dom g^*}=g^\infty,
\end{equation}
where $\sigma_C:=\delta_C^*$ is the {\em support function} of $C$.  

Let $X$ be a topological space equipped with its Borel $\sigma $-algebra $\B(X)$ and $(\Xi,\A,m)$ a measure space.  A set-valued mapping $S:\Xi \tos X$ is {\em measurable} if the inverse image  $S^{-1}(O):=\{\xi \in \Xi \mid S(\xi) \cap O  \neq \emptyset\}$ of every open set $O$ is $\A$-measurable. A function $ h : \Xi \times X \to \ereals$ is a \textit{normal integrand} on $X$ if its epigraphical mapping
\[
\epi h (\xi) := \{(x,\alpha) \in   X \times \reals \mid  h (\xi,x) \leq \alpha\}
\]
is closed-valued and measurable. When this mapping is also convex-valued, $ h $ is  a \textit{convex normal integrand}. A general treatment of normal integrands on $\reals^d$ can be found from \cite[Chapter 14]{rw98} while integrands on a Suslin space are studied in \cite{cv77}. In particular, a normal integrand $h$ is jointly measurable so that the {\em integral functional}
\[
\int h(w(\xi),\xi)dm
\]
is well-defined for any measurable $w:\Xi\to X$. Throughout the article, an integral is defined as $+\infty$ unless the positive part is integrable.

When $\A$ is  $m$-complete, every jointly measurable $ h : \Xi \times X \to \ereals$, that is lower semicontinuous in the second argument almost everywhere, is a normal integrand whenever $X$ and $X^*$ are Suslin locally convex spaces; see e.g., \cite[Lemma VII-1]{cv77}  where this is actually taken as the definition of a normal integrand. Later on, we will work with non-complete $\sigma$-algebras (the predictable and optional $\sigma$-algebras) 
so we use the definition given in terms of the epigraphical mapping.


For a normal integrand $h$, $h^\infty$ is the integrand defined $\xi$-wise as the recession function of $h(\xi,\cdot)$.  The {\em conjugate integrand} $h^*$ is defined $\xi$-wise as the conjugate of $h(\xi,\cdot)$. When $X$ and $X^*$ are Suslin and locally convex spaces,  $h^*$ is a convex normal integrand due to \cite[Corollary VII-2]{cv77}. For our purposes, $h^\infty$ is a convex normal integrand when $X=\reals^d$, see \cite[Exercise 14.54]{rw98}. Given a measurable function $w$, we denote by $\partial h(w)$ the set-valued mapping $\xi\mapsto\partial h(w(\xi),\xi)$.

Interchange rules for integral functionals on decomposable spaces go back to \cite{Roc68} for normal integrands on $\reals^d$. Theorem~\ref{thm:ifs} below is an extension of the main theorem in \cite{val75} that was formulated for a locally convex Suslin topological vector space $X$. However, our formulation is closer to  \cite[Theorem~14.60]{rw98} stated for $X=\reals^d$. We need this extension later on since the space of $\cadlag$ functions equipped with the Skorokhod topology is not a topological vector space. The proof is almost identical, but we give it here for completeness. 

Recall that $\X\subset L^0(\Xi;X)$ is {\em decomposable} if, for every $x\in\X$, $\one_A \bar x+ \one_{A^C} x \in \X$ whenever $A\in\A$ and $\bar x\in L^0(\Xi;X)$ with $m(A)<\infty$ and $\cl(\rge(\bar x))$ is compact.  
\begin{theorem}\label{thm:ifs}
Assume that $(\Xi,\A,m)$ is a complete $\sigma$-finite measure space, and $X$ is a Suslin space.  Let $h:\Xi\times X \to \ereals$ be $\A \otimes \B(X)$-measurable. Then, for $\X \subset L^0(\Xi;X)$  decomposable 
\[
\inf_{x\in\X} \int h(x)dm=\int \inf_{x\in X} h(x)dm
\]
as soon as the left side is less than $+\infty$.
\end{theorem}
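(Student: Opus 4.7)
The plan is to prove the two inequalities separately, paralleling \cite[Theorem~14.60]{rw98} with measurable-selection tools suited to the Suslin setting. For the trivial direction $\inf_{x\in\X}\int h(x)\,dm\ge\int\inf_{x\in X}h(x)\,dm$, first observe that $g(\xi):=\inf_{x\in X}h(\xi,x)$ is $\A$-measurable: $\{g<\alpha\}$ is the projection onto $\Xi$ of the $\A\otimes\B(X)$-measurable set $\{(\xi,x):h(\xi,x)<\alpha\}$, which is analytic by the projection theorem for Suslin spaces and therefore belongs to the $m$-complete $\A$. The pointwise inequality $h(\xi,x(\xi))\ge g(\xi)$ for $x\in\X$ then integrates to give the easy direction.

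For the reverse inequality, fix $\alpha>\int g\,dm$ and aim to construct $x\in\X$ with $\int h(x)\,dm<\alpha$. Using finiteness of the left-hand side, pick a reference $x_0\in\X$ with $\int h(x_0)\,dm<\infty$. Choose a strictly positive measurable $\eta$ so that the finite-valued $\A$-measurable majorant $\hat g:=\max\{g+\eta,\,-1/\eta\}$ satisfies $\int\hat g\,dm<\alpha$; when $g\equiv-\infty$ on a positive-measure set, this forces $1/\eta$ to be large there, which is allowed since $\int(-1/\eta)\,dm$ can be kept below $\alpha$ by a pointwise choice of $\eta$. The multifunction $\xi\mapsto\{x\in X:h(\xi,x)\le\hat g(\xi)\}$ is nonempty-valued by the definition of $g$ and has $\A\otimes\B(X)$-measurable graph, so the Jankov--von Neumann measurable selection theorem, applicable because $X$ is Suslin and $\A$ is $m$-complete, provides an $\A$-measurable $\bar x:\Xi\to X$ with $h(\xi,\bar x(\xi))\le\hat g(\xi)$; hence $\int h(\bar x)\,dm<\alpha$.

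Such a $\bar x$ typically does not lie in $\X$, so we patch it with $x_0$ via decomposability. Since $X$ is Suslin, the pushforward of $m|_B$ under $\bar x$ is Radon on every $B\in\A$ with $m(B)<\infty$, hence inner regular on compact sets. Combined with $\sigma$-finiteness of $m$, this yields an increasing sequence $A_n\in\A$ with $\bigcup_n A_n=\Xi$ up to a null set, $m(A_n)<\infty$, and $\cl(\bar x(A_n))$ compact. Decomposability then delivers $x_n:=\one_{A_n}\bar x+\one_{A_n^c}x_0\in\X$. Splitting $\int h(x_n)\,dm=\int_{A_n}h(\bar x)\,dm+\int_{A_n^c}h(x_0)\,dm$, the first term converges to $\int h(\bar x)\,dm$ by monotone/dominated convergence applied separately to the positive and negative parts, and the second vanishes because $h(x_0)^+$ is $m$-integrable and $m(A_n^c\cap\{h(x_0)\ne 0\})\to 0$. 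For large $n$, $\int h(x_n)\,dm<\alpha$, completing the argument.

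The main obstacle is the interplay between Jankov--von Neumann selection and the compact-range constraint built into decomposability: one must ensure $\bar x$ can simultaneously be chosen $\A$-measurable, $\hat g$-close to the infimum, and concentrated on $\sigma$-compact slices, which is exactly what the Radon property of Borel measures on Suslin spaces provides. A secondary delicate point is the construction of $\hat g$ on $\{g=-\infty\}$, where the asymmetric ``$+\infty$-convention'' of the integral forces the lower truncation $-1/\eta$ to be chosen with care. These are the places where the Suslin (rather than Polish or $\reals^d$) hypothesis genuinely enters the argument of \cite[Theorem~14.60]{rw98}.
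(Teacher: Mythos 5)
Your proof is correct and follows essentially the same route as the paper's: measurability of $\xi\mapsto\inf_x h(\xi,x)$ via the projection theorem and completeness, a measurable selection (Jankov--von Neumann, equivalently the section theorems of \cite[III.44-45]{dm78}) below a strict measurable majorant with integral below $\alpha$, Radonness of the image measure on the Suslin space to obtain finite-measure sets with relatively compact image, and patching with a finite-integral element of $\X$ by decomposability before passing to the limit. Your explicit truncation on $\{g=-\infty\}$ is a slightly more careful variant of the paper's choice $\beta=\epsilon\bar\beta+p$, and the only blemish is that $\int_{A_n^c}h(x_0)\,dm$ need not vanish when $h(x_0)^-$ is non-integrable, but since only $\limsup_n\int_{A_n^c}h(x_0)\,dm\le 0$ is needed (exactly what dominated convergence for $h(x_0)^+$ gives), the argument stands.
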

\begin{proof}
We note first that
\[
p(\xi):=\inf_{x \in X} h(\xi,x)
\]	
is measurable. Indeed, for the projection  $\prod: \Xi \times X \to \Xi$ and any $b\in\reals$, we have $p^{-1}((\infty,b))=\prod h^{-1}((-\infty,b))$, where the right side is measurable due to \cite[III.44-45]{dm78}, since $\A$ is complete.

Let $\alpha>\int p dm$. There exists $\beta\in L^0(\Xi)$ such that $\beta>p $ and $\int \beta dm <  \alpha$. Indeed, by $\sigma$-finiteness, there exists strictly positive $\bar\beta\in L^1(\Xi,m)$ so that, for small enough $\epsilon$, we may choose $\beta:=\epsilon \bar\beta+p$. The set 
\[
A:=\{(\xi,x)\in \Xi \times X\mid h(\xi,x) < \beta(\xi)\}
\]
is $\A \otimes \B(X)$-measurable and $\Pi A =\Xi$, so there exists $x\in L^0(X)$ such that $h(x) < \beta$ by \cite[III.44-45]{dm78}.

By \cite[Theorem 7.4.3]{bog7}, the law of $m\circ x^{-1}$ is Radon on $X$, so there exists a nondecreasing sequence $(\Xi_\nu)$, $\bigcup \Xi_\nu=\Xi$  such that $\cl x(\Xi_\nu)$ are compact and $\int 1_{\Xi_\nu^C} xdm<1/\nu$. Let now $\bar x\in\X$ be such that $\int h(\bar x)dm<\infty$, and define
\[
\bar x^\nu=1_{\Xi_\nu}x+1_{\Xi_\nu^C} \bar x.
\]
 By construction, $\bar x^\nu\in\X$, and $h(x^\nu)\le \max\{h(x),h(\bar x)\}$ for all $\nu$, so, by Fatou's lemma,  $\int h(x^\nu) dm <\alpha$ for $\nu$ large enough. 
\end{proof}

\section{Integral functionals of \cadlag\ functions}\label{sec:det}
This section gives an interchange rule for integral functionals on the space of \cadlag\ functions. Our main results  build on the interchange rule.

The space $D$ of $\reals^d$-valued \cadlag\ functions  on $[0,T]$ is Banach for the norm
\begin{equation*}
\left\| y \right\|_{\infty}=\sup_{t \in [0,T]}\left| y_t\right|.  
\end{equation*}
We denote $y_{0-}:=0$ and note that $\lim_{t \nearrow T} y_t$ exists also in the case $T=\infty$. The  dual of $D$ can be identified with $\hat M:= M\times\tilde M$ under the bilinear form
\[
\langle y,\hat u\rangle := \int ydu+\int y_-d\tilde u,
\]
where $\hat u=(u,\tilde u) \in \hat M$, $M$ is the space of $\reals^d$-valued Radon measures on $[0,T]$, $\tilde M\subset M$ is the space of purely atomic measures, and $y_-$ denotes the left continuous version of $y$; this is a deterministic special case of  \cite[Theorem VII.65 and Remark VII.4 (a)]{dm82}. 

Given  a nonnegative  Radon measure $\mu$ on $[0,T]$ and a convex normal integrand $h:[0,T]\times\reals^d\to\ereals$ on $\reals^d$,  the associated integral functional on the space of measurable $\reals^d$-valued functions is
\[
I_h(y):=\int h(y)d\mu:=\int_{[0,T]}h_t(y_t)d\mu_t.
\]
For the {\em domain mapping} $\dom h_t:=\{y\in\reals^d\mid h_t(y)<\infty\}$, 
\begin{equation*}
S_t :=\{ y\in\reals^d \mid y\in\cl \dom h_t\}
\end{equation*}
is its  image closure and
\begin{equation*}
D(S):=\{y\in D\mid y_t\in S_t\ \forall t\}
\end{equation*}
is the set of {\em \cadlag\ selections} of $S$. 

Theorem~\ref{thm:ifcadlag} below is an interchange rule for integral functionals on the nondecomposable space $D$. The assumptions of the theorem are analogous to those of \cite[Theorem~5]{per18a} that was formulated on continuous functions.
\begin{assumption}\label{ass:clS}
We have
\begin{align}\label{ass:clS1}
S_t&= \cl \{y_t \mid y \in D(S)\}\quad\forall\ t,\\
D(S) &=\cl(\dom I_h\cap D(S)),\nonumber
\end{align}
where the latter closure is with respect to the supremum norm.
\end{assumption}

\begin{theorem}\label{thm:ifcadlag}
Under Assumption \ref{ass:clS},
\[
\inf_{y\in D(S)} I_h(y)  = \int \inf_{y\in\reals^d} h(y)d\mu
\]
as soon as the left side is less than $+\infty$.
\end{theorem}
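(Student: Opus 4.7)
The direction $\inf_{y\in D(S)}I_h(y)\ge\int p\,d\mu$, with $p(t):=\inf_{x\in\reals^d}h_t(x)$, is immediate from the pointwise bound $h_t(y_t)\ge p(t)$. For the nontrivial inequality, fix $\alpha>\int p\,d\mu$. The plan is to first obtain a measurable near-minimizer via Theorem~\ref{thm:ifs}, then approximate it by \cadlag\ selections of $S$, and finally use convexity to keep the integral below $\alpha$.

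Applying Theorem~\ref{thm:ifs} to $([0,T],\mu)$, the Suslin space $\reals^d$, and the decomposable set $\X=L^0([0,T],\mu;\reals^d)$ with integrand $h$, we obtain a measurable $y^*:[0,T]\to\reals^d$ with $\int h(y^*)\,d\mu<\alpha$; finiteness of this integral forces $y^*_t\in\dom h_t\subseteq S_t$ for $\mu$-a.e.\ $t$. Because the left side is finite, Assumption~\ref{ass:clS} (second part) provides $\bar y\in D(S)\cap\dom I_h$. For partitions $0=t_0^n<\cdots<t_{N_n}^n=T$ of mesh tending to $0$, Assumption~\ref{ass:clS} (first part) together with a measurable selection argument produces $y^{(n,i)}\in D(S)$ with $y^{(n,i)}_{t_{i-1}^n}$ close to $y^*_{t_{i-1}^n}$; the concatenation $z^n_t:=y^{(n,i)}_t$ for $t\in[t_{i-1}^n,t_i^n)$ is \cadlag\ (concatenations of \cadlag\ functions across a finite partition remain \cadlag) and takes values in $S$, hence $z^n\in D(S)$. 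The convex combination $w^{n,\lambda}:=\lambda\bar y+(1-\lambda)z^n$ remains in $D(S)$ by convexity of each $S_t$ and, by convexity of $h_t$, satisfies $I_h(w^{n,\lambda})\le\lambda I_h(\bar y)+(1-\lambda)I_h(z^n)$.

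The crucial remaining step, and the main obstacle, is to show $\limsup_n I_h(z^n)\le\int h(y^*)\,d\mu$; once this is in hand, taking $n$ large and $\lambda>0$ small yields $I_h(w^{n,\lambda})<\alpha$, as required. This integral-sense convergence is not immediate from pointwise density at the partition points: it requires refining the selection so that each $y^{(n,i)}$ is uniformly close to $y^*$ throughout $[t_{i-1}^n,t_i^n)$, not merely at $t_{i-1}^n$, combined with a Fatou-type limit argument in which $\bar y$ provides an integrable majorant and the lower semicontinuity of $h_t$ handles the values of the approximation. The role of the Michael-type density in Assumption~\ref{ass:clS} is precisely to replace the decomposability used in Theorem~\ref{thm:ifs}, which $D(S)$ itself lacks.
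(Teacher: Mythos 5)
There is a genuine gap, and it sits exactly where you flag it: the inequality $\limsup_n I_h(z^n)\le\int h(y^*)\,d\mu$ is never established, and the route you sketch for it does not work. Assumption~\ref{ass:clS} only gives, for each fixed $t$, density of $\{y_t\mid y\in D(S)\}$ in $S_t$; it does not let you choose $y^{(n,i)}\in D(S)$ uniformly close to $y^*$ on the whole interval $[t_{i-1}^n,t_i^n)$, and no such choice can exist in general because $y^*$ is merely a measurable near-minimizer with no path regularity. Even granting convergence of $z^n$ to $y^*$ in measure, the Fatou-type argument you invoke points the wrong way: $h_t$ is only lower semicontinuous in $x$, so limits of approximations give $\liminf_n I_h(z^n)\ge\int h(y^*)\,d\mu$, whereas you need an upper bound. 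Worse, away from the partition points the values $y^{(n,i)}_t$ are constrained only to lie in $S_t=\cl\dom h_t$, where $h_t$ may be $+\infty$ or arbitrarily large, so nothing rules out $I_h(z^n)=+\infty$ for every $n$; the convex combination with $\bar y$ cannot repair this, since $\lambda I_h(\bar y)+(1-\lambda)I_h(z^n)$ is still $+\infty$. The initial reduction via Theorem~\ref{thm:ifs} on $L^0([0,T],\mu;\reals^d)$ is fine, but the passage from the measurable near-minimizer to a \cadlag\ selection of $S$ with controlled integral is the entire difficulty of the theorem, and it is left open.

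For comparison, the paper does not attempt this approximation by hand. It observes that $D(S)$ is PCU-stable and that $D(S)=\cl(\dom I_h\cap D(S))$, so the interchange rule of Bouchitt\'e--Valadier \cite[Theorem~1]{bv88} applies directly to the nondecomposable class $D(S)$ and yields $\inf_{y\in D(S)}I_h(y)=\int\inf_{y\in\Gamma_t}h_t(y)\,d\mu_t$, where $\Gamma$ is the $\mu$-essential supremum of $D(S)$; the first part of Assumption~\ref{ass:clS} then gives $S_t\subseteq\Gamma_t$ $\mu$-a.e., so the inner infimum may be taken over all of $\reals^d$. The delicate construction you are trying to carry out is in effect encapsulated in that cited theorem; if you want a self-contained argument you would need to reproduce its machinery (or the analogous argument of \cite{per18a} for continuous functions), not a partition-and-concatenate scheme controlled only at the partition points.
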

\begin{proof}
We have $D(S)=\cl(\dom I_h\cap D(S))$ and $D(S)$ is PCU-stable in the sense of \cite{bv88}, so, by \cite[Theorem~1]{bv88},
\[
\inf_{y\in D(S)} I_h(y) = \int \inf_{y\in\Gamma_t} h_t(y)d\mu_t, 
\]
where $\Gamma$ is the essential supremum of $D(S)$, i.e., the smallest (up to a $\mu$-null set) closed-valued mapping for which every $y\in D(S)$ is a selection of $\Gamma$ $\mu$-almost everywhere. We have $S_t\subseteq\Gamma_t$ $\mu$-almost everywhere by \eqref{ass:clS1}, so the infimum over $\Gamma_t$ can be taken instead over all of $\reals^d$.
\end{proof}

The above result has a variant on the space $D_l$ of \caglad\ functions (the french abbreviation of left continuous  with right limits), which we equip likewise with the supremum norm.  Given  another nonnegative  Radon measure $\tilde \mu$ on $[0,T]$ and  convex normal integrand $\tilde h$, we define likewise 
\[
I_{\tilde h}(y):=\int \tilde h(y) d\tilde \mu:=\int  \tilde h_t(y_t)d \tilde \mu_t
\]
on the space of measurable $\reals^d$-valued functions, $\tilde S_t:=\cl \dom \tilde h_t$ and 
\begin{align*}
D_l(\tilde S)&:= \{y\in D_l \mid y_t\in \tilde S_t\ \forall t\}.
\end{align*}
We use the convention $\tilde S_{0-}=\{0\}$.

\begin{assumption}\label{ass:clSp}
We have
\begin{align*}
 \tilde S_t&=\cl \{y_t \mid y \in D_l(\tilde S)\}\quad\forall\ t,\\
 D_l(\tilde S) &=\cl(\dom I_{\tilde h}\cap D_l(\tilde S)),
\end{align*}
where the latter closure is with respect to the supremum norm.
\end{assumption}

\begin{theorem}\label{thm:ifcaglad}
Under Assumption \ref{ass:clSp}
\[
\inf_{y\in D_l(\tilde S)} I_{\tilde h}(y) = \int  \inf_{y\in\reals^d} \tilde h(y)d \tilde \mu
\]
as soon as the left side is less than $+\infty$.
\end{theorem}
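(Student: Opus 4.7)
The plan is to mirror, line for line, the argument used for Theorem~\ref{thm:ifcadlag}, since that proof relies only on two structural features shared by the c\`adl\`ag and \caglad\ settings: the convex-valuedness of the image-closure mapping and the stability of the selection class under continuous partitions of unity. Concretely, I would first verify that $D_l(\tilde S)$ is PCU-stable in the sense of \cite{bv88}. Note that $\tilde S_t=\cl\dom\tilde h_t$ is convex-valued since $\tilde h_t$ is convex, and that a continuous convex combination $\sum_i \phi_i y_i$ of \caglad\ selections of $\tilde S$ is again a \caglad\ selection of $\tilde S$: left-continuity and the existence of right limits are preserved under multiplication and addition by continuous deterministic weights, and the pointwise convex combination remains in $\tilde S_t$ because each $\tilde S_t$ is convex.

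Next, the second part of Assumption~\ref{ass:clSp} gives $D_l(\tilde S)=\cl(\dom I_{\tilde h}\cap D_l(\tilde S))$ with respect to the supremum norm. Combined with PCU-stability, \cite[Theorem~1]{bv88} then yields
\[
\inf_{y\in D_l(\tilde S)} I_{\tilde h}(y) = \int \inf_{y\in \tilde\Gamma_t} \tilde h_t(y)d\tilde\mu_t,
\]
where $\tilde\Gamma$ is the essential supremum of $D_l(\tilde S)$, i.e.\ the smallest (up to a $\tilde\mu$-null set) closed-valued mapping of which every $y\in D_l(\tilde S)$ is a $\tilde\mu$-a.e.\ selection.

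Finally, the first part of Assumption~\ref{ass:clSp}, namely $\tilde S_t=\cl\{y_t\mid y\in D_l(\tilde S)\}$ for all $t$, implies $\tilde S_t\subseteq\tilde\Gamma_t$ $\tilde\mu$-a.e., so the inner infimum over $\tilde\Gamma_t$ can be enlarged to an infimum over all of $\reals^d$ without changing the value, since $\tilde h_t=+\infty$ outside $\tilde S_t$ anyway. This gives the claimed equality.

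I expect the only nontrivial point to be the PCU-stability verification, and specifically the preservation of \caglad\ regularity under continuous partition-of-unity combinations; the endpoint convention $\tilde S_{0-}=\{0\}$ is handled automatically since every $y\in D_l$ satisfies $y_{0-}=0$ by definition. Once PCU-stability is in hand, the remainder of the argument is a direct transcription of the \cadlag\ proof.
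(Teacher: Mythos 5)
Your proposal is correct and follows exactly the route the paper intends: the paper states Theorem~\ref{thm:ifcaglad} without a separate proof precisely because it is the verbatim analogue of the proof of Theorem~\ref{thm:ifcadlag} (PCU-stability of the selection class, \cite[Theorem~1]{bv88}, then Assumption~\ref{ass:clSp} to pass from the essential supremum $\tilde\Gamma_t$ to all of $\reals^d$), which is what you carry out. Your extra verification that \caglad\ regularity and membership in the convex sets $\tilde S_t$ survive continuous partition-of-unity combinations is the right point to check and is handled correctly.
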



Next we state the deterministic special case of our main result, Theorem~\ref{thm:cifcp} below. That is, we give a pointwise expression for the conjugate of 
\[
\hat f(y)=I_h(y)+\delta_{D(S)}(y)+I_{\tilde h}(y_-) + \delta_{D_l(\tilde S)}(y_-)
\]
and for its subdifferentials. This result can be used to extend  duality results in deterministic singular control using the machinery in, e.g., \cite{roc78,pp14}.

It will turn out that the conjugate can be expressed in terms of $J_{h^*}$, where for a normal integrand $h$, the functional $J_h:M\to\ereals$ is defined by
\begin{equation}\label{eq:Jh}
J_h(\theta)=\int h(d\theta^a/d\mu)d\mu+\int h^\infty(d\theta^s/d|\theta^s|)d|\theta^s|.
\end{equation}
Here $\theta^a$ and $\theta^s$ are the absolutely continuous and the singular part, respectively, of $\theta$ with respect to $\mu$ and $|\theta^s|$ is the total variation of $\theta^s$. The formula \eqref{eq:domrec} makes it evident that the properties of the domain mapping $S$ play an important role for the validity of the representation in terms of $J_{h^*}$. For integral functionals on continuous functions,  such conjugate formulas go back to \cite{roc71}; see also \cite{per18a} for necessity and sufficiency. We use the notation $\partial^sh:=\partial\delta_{S}$.

\begin{theorem}

Assume that $\tilde\mu$ is purely atomic and, for every $\epsilon>0$, $ y  \in\dom I_{ h } \cap  D(S)$ and $\tilde y\in \dom I_{\tilde h}\cap D_l(\tilde S)$ and $(u,\tilde u)\in\hat M$, there exists $\check  y  \in D(S)$ and $\hat  y  \in D_l(\tilde S)$ with
\begin{enumerate}
\item  $I_{ h }(\check  y )+\int \check ydu \le I_{ h }( y ) + \int ydu +\epsilon$ and $\check y_-\in\dom I_{\tilde h}\cap D_l(\tilde S)$,
\item  $I_{\tilde h }(\hat  y ) +\int \hat yd\tilde u \le I_{ \tilde h }( \tilde y ) +\int \tilde yd\tilde u+\epsilon$ and $\hat y_+\in\dom I_{h}\cap D(S)$.
\end{enumerate}
Then, under Assumptions \ref{ass:clS} and \ref{ass:clSp},  $\hat f$ is a proper lower semicontinuous convex function on $D$,
\[
\hat f^*(u,\tilde u)=J_{h^*}(u)+J_{\tilde h^*}(\tilde u)
\]
and $(u,\tilde u)\in\partial \hat f(y)$ if and only if
\begin{align*}
du/d\mu &\in \partial h(y)\quad \mu\text{-a.e.},\\
du/d|u^s| &\in \partial^s h(y)\quad |u^s|\text{-a.e.},\\
d\tilde u/d\tilde\mu &\in \partial \tilde h(y_-)\quad \mu\text{-a.e.},\\
d\tilde u/d|\tilde u^s| &\in \partial^s \tilde h(y_-)\quad |\tilde u^s|\text{-a.e.}
\end{align*}
\end{theorem}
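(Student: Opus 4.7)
The plan is to establish the conjugate formula first, then read off the subdifferential characterization and the regularity of $\hat f$ as consequences. Throughout, decompose $u=u^a+u^s$ with respect to $\mu$ and $\tilde u=\tilde u^a+\tilde u^s$ with respect to $\tilde\mu$.

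For the upper bound $\hat f^*(u,\tilde u)\le J_{h^*}(u)+J_{\tilde h^*}(\tilde u)$, fix any $y\in D(S)$ with $y_-\in D_l(\tilde S)$ and write $\int y\,du=\int y\,(du/d\mu)\,d\mu+\int y\,(du/d|u^s|)\,d|u^s|$. Pointwise Fenchel--Young gives $y_t\cdot(du/d\mu)_t-h_t(y_t)\le h^*_t((du/d\mu)_t)$ $\mu$-a.e., while $y_t\in S_t$ together with \eqref{eq:domrec} applied to $h^*$ gives $y_t\cdot(du/d|u^s|)_t\le\sigma_{S_t}((du/d|u^s|)_t)=(h^*)^\infty((du/d|u^s|)_t)$ on the $|u^s|$-a.e.\ part. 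Integrating and adding the analogous estimate on the $\tilde h$-side yields the upper bound.

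For the lower bound, I first compute the two one-sided suprema by interchange. Put $\lambda=\mu+|u^s|$ and
\begin{equation*}
\check h_t(y):=h_t(y)(d\mu/d\lambda)_t-y\cdot(du/d\lambda)_t+\delta_{S_t}(y).
\end{equation*}
For $y\in D(S)$ one has $\int\check h_t(y_t)\,d\lambda_t=I_h(y)-\int y\,du$, and since $\{d\mu/d\lambda>0\}$ and $\{d|u^s|/d\lambda>0\}$ are $\lambda$-a.e.\ disjoint, a pointwise computation gives $\int\inf_y\check h_t\,d\lambda=-J_{h^*}(u)$. One checks that $\cl\dom\check h_t=S_t$ and that Assumption~\ref{ass:clS} for $h$ transfers to $\check h$ (using that $\int y\,du$ is finite on $D$ because $y$ is bounded and $u$ is a finite Radon measure), so Theorem~\ref{thm:ifcadlag} yields $\sup_{y\in D(S)}[\int y\,du-I_h(y)]=J_{h^*}(u)$. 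Symmetrically, Theorem~\ref{thm:ifcaglad} gives $\sup_{\tilde y\in D_l(\tilde S)}[\int\tilde y\,d\tilde u-I_{\tilde h}(\tilde y)]=J_{\tilde h^*}(\tilde u)$.

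The main obstacle is combining these two one-sided suprema into a single joint supremum over $y\in D$. This is where conditions (1), (2) and the pure atomicity of $\tilde\mu$ enter. Applied with $-u$ and $-\tilde u$, conditions (1) and (2) allow replacing any near-optimizer for either one-sided supremum by one in the common feasible set $\{y\in D(S)\cap\dom I_h:y_-\in D_l(\tilde S)\cap\dom I_{\tilde h}\}$. Since $\tilde\mu$ and $\tilde u$ are both purely atomic, the functional $y\mapsto\int y_-\,d\tilde u-I_{\tilde h}(y_-)$ depends on $y$ only through the values of $y_-$ at a countable set of atoms; one may therefore perturb an $h$-near-optimizer on disjoint left neighborhoods of those atoms so that its prescribed left limits match those of a chosen $\tilde h$-near-optimizer, while the jump freedom $y_t\neq y_{t-}$ at each atom keeps the first functional disturbed by at most $\epsilon$ and Assumption~\ref{ass:clS} (the Michael-type representation of $S$) keeps the perturbed path inside $D(S)$. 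This yields $\hat f^*(u,\tilde u)\ge J_{h^*}(u)+J_{\tilde h^*}(\tilde u)$ and hence equality.

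With the conjugate formula in hand, convexity of $\hat f$ is immediate, properness follows since $\dom\hat f\neq\emptyset$ by Assumption~\ref{ass:clS} combined with conditions (1) and (2), and lower semicontinuity on $(D,\|\cdot\|_\infty)$ follows from sup-closedness of $D(S)$, sup-continuity of $y\mapsto y_-$ (so the constraint $y_-\in D_l(\tilde S)$ is closed), and Fatou-type lsc of $I_h$ and $I_{\tilde h}(\cdot_-)$. Finally, $(u,\tilde u)\in\partial\hat f(y)$ is equivalent to the extremality equation $\hat f(y)+\hat f^*(u,\tilde u)=\langle y,u\rangle+\langle y_-,\tilde u\rangle$; substituting the conjugate formula forces equality in each of the four pointwise Fenchel--Young inequalities used in the upper bound, which is exactly the system of subdifferential inclusions in the statement.
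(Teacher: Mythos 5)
Your overall architecture --- Fenchel--Young plus the recession identity \eqref{eq:domrec} for the upper bound, interchange (Theorems~\ref{thm:ifcadlag} and \ref{thm:ifcaglad}) applied to shifted integrands for the two one-sided suprema, and a pasting at the atoms of $\tilde\mu$ to combine them --- is the same route the paper takes (the statement is proved there as the deterministic case of Theorem~\ref{thm:cifcp}, whose engine is the interchange rule of Theorem~\ref{thm:ircp}). The gap is in the combination step, which is exactly the step the hypotheses (1)--(2) are designed for. The mechanism you describe --- prescribe the left limits of an $h$-near-optimizer at the atoms, argue that ``jump freedom'' keeps $I_h(\cdot)+\int\cdot\,du$ disturbed by at most $\epsilon$, and that Assumption~\ref{ass:clS} keeps the perturbed path in $D(S)$ --- does not work as written. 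To force $y_{t-}$ to equal a prescribed value at an atom $t$ you must modify $y$ on a whole left neighborhood $[t-\delta,t)$, a set of possibly positive $\mu$-measure; Assumption~\ref{ass:clS} gives no guarantee that the prescribed value lies in $S_s$ for $s\in[t-\delta,t)$, nor that $h_s$ is finite (let alone integrable) at that value there, so the modified path may leave $D(S)$ and $I_h$ may become $+\infty$. The correct use of hypothesis (2) is that the tilde-side near-optimizer $\hat y$ can be chosen so that its right-continuous version $\hat y_+$ is itself an element of $\dom I_h\cap D(S)$ with $(\hat y_+)_-=\hat y$; one then pastes the \cadlag\ \emph{function} $\hat y_+$ (not prescribed values) onto shrinking left neighborhoods of the atoms. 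Membership in $D(S)$ is then automatic, $h$ of the pasted path is dominated by $\max\{h(y),h(\hat y_+)\}$, which is integrable, and the error disappears by Fatou/dominated convergence as the neighborhoods shrink --- not by an $\epsilon$-estimate from jump freedom. You invoke (1)--(2) earlier to place near-optimizers in the ``common feasible set'', but your pasting paragraph never actually uses $\hat y_+$, and without it the key inequality $\hat f^*\ge J_{h^*}+J_{\tilde h^*}$ is not established.

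Two further repairs are needed. The atoms of $\tilde\mu$ and $\tilde u$ form a countable, possibly dense set, so disjoint positive-length left neighborhoods of all of them do not exist; the argument must first truncate to the finitely many atoms of mass at least $1/\nu$, paste there (this also keeps the pasted path \cadlag), and then let $\nu\to\infty$ with a second limiting argument, using hypothesis (1) to ensure $\tilde h(y_-)$ is integrable off the large atoms. Finally, lower semicontinuity of $\hat f$ does not follow from ``Fatou-type lsc of $I_h$'' alone: Fatou along a uniformly convergent sequence needs an integrable minorant of $h(y^n)$, which amounts to finiteness of $J_{h^*}$ at some point (the deterministic analogue of condition 2 in Assumption~\ref{ass:ifocp}); the paper instead deduces lower semicontinuity from the dual representation as a supremum of continuous affine functionals, as in Lemma~\ref{lem:lsc}, once the conjugate formula is available.
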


\section{Graph measurability of integral functionals}\label{sec:graph}
In this section we endow $\Xi=\Omega  \times [0, T ]$ with the product $\sigma$-algebra $\A:=\F\otimes\B([0,T])$. From now on, we fix a convex normal integrand $ h :\Xi \times \reals^d \to \ereals$. For a measurable closed convex-valued mapping $\Gamma:\Omega \times [0,T] \rightrightarrows\reals^d$ we define $D(\Gamma):\Omega \tos D$  by
\[
D(\Gamma) (\omega):=\{y\in D \mid y_t \in \Gamma_t(\omega) \thinspace \forall\ t \}. 
\]
Similarly, we define, pathwise, $I_h$  for any $\reals^d$-valued measurable function $w$ on $[0,T]$ and $J_h$ for any $u\in M$ as
\begin{align*}
I_h(\omega,w)&:=I_{h(\omega,\cdot)}(w),\\
J_{h^*}(\omega,u) &:=J_{h^*(\omega,\cdot)}(u).
\end{align*}

We will consider \cadlag\ processes as $D$-valued random variables where the measurability is understood with respect to the Borel-$\sigma$-algebra $\B(D)$ generated by the Skorokhod topology $\tau_{S}$.  The reason is, that $\tau_{S}$ is Suslin whereas the topology $\tau_D$ generated by the supremum norm is not (since, e.g., $\tau_D$ is not separable). However, $\B(D)$ coincides with the Borel-$\sigma$-algebra generated by the continuous linear functionals on $\tau_D$; see \cite[Theorem~3]{pes95}. It is also generated by point-evaluations; see \cite[Theorem 12.5]{bil99}. For the definition of  the Skorokhod topology and its basic properties, we refer to  \cite{Jacod2002} and \cite{hwy92}.

\begin{lemma}\label{lem:ni1}
The function $I_h: \Omega\times D\rightarrow\ereals$ is $\F\otimes\B(D)$-measurable.
\end{lemma}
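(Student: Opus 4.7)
The plan is to factor $I_h$ through the jointly measurable map $(\omega,t,y)\mapsto h_t(\omega,y_t)$ and then invoke Fubini-type measurability for random measures. The essential point is that the Skorokhod Borel $\sigma$-algebra $\B(D)$ is generated by the point-evaluations $y\mapsto y_t$ (Theorem 12.5 of \cite{bil99}), which allows us to handle measurability in the $D$-variable through finite-dimensional projections, despite $(D,\tau_S)$ not being a topological vector space.

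The first step is to verify that the evaluation map $\mathrm{ev}:[0,T]\times D\to\reals^d$, $(t,y)\mapsto y_t$, is $\B([0,T])\otimes\B(D)$-measurable. For each $n$ and each dyadic grid $t^n_k:=kT/2^n$, define
\[
y^n_t:=\sum_{k=0}^{2^n-1} y_{t^n_k}\one_{[t^n_k,t^n_{k+1})}(t)+y_T\one_{\{T\}}(t).
\]
Each summand is a product of a $\B(D)$-measurable function of $y$ and a Borel function of $t$, so $(t,y)\mapsto y^n_t$ is jointly Borel measurable. By right-continuity, $y^n_t\to y_t$ for every $(t,y)$, hence $\mathrm{ev}$ is the pointwise limit of jointly measurable maps, so jointly measurable. (For $T=\infty$, one proceeds on each $[0,n]$ and takes the limit, using that $y_t$ is well-defined at $t=T$ thanks to the existence of left limits.)

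Next, since $h$ is a convex normal integrand, it is $\F\otimes\B([0,T])\otimes\B(\reals^d)$-measurable. Composing with $(\omega,t,y)\mapsto(\omega,t,y_t)$, we get that
\[
(\omega,t,y)\mapsto h_t(\omega,y_t)
\]
is $\F\otimes\B([0,T])\otimes\B(D)$-measurable. Decomposing into positive and negative parts $h^\pm$, both
\[
(\omega,y)\mapsto\int_{[0,T]} h^\pm_t(\omega,y_t)\,d\mu_t(\omega)
\]
are $\F\otimes\B(D)$-measurable by the standard Fubini/kernel argument, since $\mu$ is a random measure (a measurable kernel from $(\Omega,\F)$ to $[0,T]$) and the integrand is nonnegative and jointly measurable. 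With the convention that the integral equals $+\infty$ whenever $\int h^+_t(\omega,y_t)\,d\mu_t(\omega)=+\infty$, the function
\[
I_h(\omega,y)=\int_{[0,T]} h_t(\omega,y_t)\,d\mu_t(\omega)
\]
is therefore $\F\otimes\B(D)$-measurable.

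The only step requiring real care is the joint measurability of $\mathrm{ev}$; since $(D,\tau_S)$ is not a normed space in the usual sense for joint arguments, one cannot appeal to the continuity of evaluation, and the approximation by dyadic step functions together with the generation of $\B(D)$ by point-evaluations is the main ingredient.
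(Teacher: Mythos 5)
Your route is genuinely different from the paper's, and it is viable, but the key step as written fails. In the dyadic approximation you set $y^n_t=y_{t^n_k}$ on $[t^n_k,t^n_{k+1})$, i.e., you sample at the \emph{left} endpoint of each cell. Since $t^n_k\le t$ and $t^n_k\uparrow t$, right-continuity gives $y^n_t\to y_{t-}$, not $y_t$, at every non-dyadic $t$: for $y=\one_{[s,T]}$ with $s$ non-dyadic one has $y^n_s=0$ for all $n$ while $y_s=1$. So the claim ``by right-continuity, $y^n_t\to y_t$ for every $(t,y)$'' is false, and joint measurability of the evaluation map does not follow from your construction as stated. The repair is immediate: sample at the \emph{right} endpoint, $y^n_t:=\sum_{k=0}^{2^n-1} y_{t^n_{k+1}}\one_{[t^n_k,t^n_{k+1})}(t)+y_T\one_{\{T\}}(t)$; then the sampled point lies in $(t,t+T/2^n]$, so right-continuity does give $y^n_t\to y_t$ for all $(t,y)$, and each $y\mapsto y_{t^n_{k+1}}$ is $\B(D)$-measurable because $\B(D)$ is generated by point evaluations. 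With this one-line fix the rest of your argument goes through: $h$ is jointly measurable as a normal integrand, composition with $(\omega,t,y)\mapsto(\omega,t,y_t)$ preserves measurability, and the kernel/monotone-class argument for the random measure applied to $h^+$ and $h^-$, combined with the paper's convention that the integral is $+\infty$ unless the positive part is integrable, yields the $\F\otimes\B(D)$-measurability of $I_h$.

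For comparison, the paper proves the lemma by citation rather than construction: it uses the fact that sequential Skorokhod convergence implies pointwise convergence outside a countable set \cite{hwy92} and then invokes the measurability theorem for integral functionals from \cite{pp18d}. Your (corrected) argument is more elementary and self-contained, replacing the external theorem by an explicit discretization plus a Fubini-type argument for the random measure; it is also in the spirit of the paper's own Lemma~\ref{lem:jointmeas}, which obtains joint measurability of $(\omega,y)\mapsto y_{\tau(\omega)}$ by averaging over $[\tau,\tau+1/\nu]$, i.e., again by approximating the evaluation from the right, exactly the direction your dyadic scheme must use.
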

\begin{proof}
By \cite[Theorem 15.12]{hwy92}, the sequential convergence in the Skorohod topology implies pointwise convergence outside a countable set. Thus, by \cite[Theorem~20]{pp18d}, $I_{ h }:\Omega\times D\rightarrow\ereals$ is $\F\otimes\B(D)$-measurable.
\end{proof}

\begin{lemma}\label{lem:jointmeas}
For a random time $\tau$, the mappings $(\omega,y)\mapsto y_{\tau(\omega)}$ and $(\omega,y)\mapsto y_{\tau(\omega)-}$ are $\F\otimes\B(D)$-measurable.
\end{lemma}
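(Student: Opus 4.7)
The plan is to exploit the fact, recalled just above the lemma, that $\B(D)$ is generated by the point evaluations $\pi_t:y\mapsto y_t$ (\cite[Theorem 12.5]{bil99}). Consequently, for each fixed $t\in[0,T]$ the map $y\mapsto y_t$ is $\B(D)$-measurable, and since $y_{t-}=\lim_n y_{s_n}$ for any sequence of dyadic rationals $s_n\uparrow t$ with $s_n<t$ (with $y_{0-}=0$ by convention), $y\mapsto y_{t-}$ is $\B(D)$-measurable as well. I would then treat the case when $\tau$ takes only countably many values $\{t_n\}\subset[0,T]$:
\[
y_{\tau(\omega)}=\sum_n\one_{\{\tau=t_n\}}(\omega)\,y_{t_n},\qquad y_{\tau(\omega)-}=\sum_n\one_{\{\tau=t_n\}}(\omega)\,y_{t_n-},
\]
each summand being the product of an $\F$-measurable function of $\omega$ with a $\B(D)$-measurable function of $y$, hence both sums are $\F\otimes\B(D)$-measurable.

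For a general random time I would pass to dyadic approximations. Set $\tau_n(\omega):=(\lceil 2^n\tau(\omega)\rceil/2^n)\wedge T$; each $\tau_n$ is $\F$-measurable with countable range, and $\tau_n\downarrow\tau$. By right-continuity of \cadlag\ paths, $y_{\tau_n(\omega)}\to y_{\tau(\omega)}$ pointwise in $(\omega,y)$, and the pointwise limit of jointly measurable functions is jointly measurable. For $y_{\tau-}$ one needs approximation strictly from below: define $\sigma_n(\omega):=(\lceil 2^n\tau(\omega)\rceil-1)/2^n$ on $\{\tau>0\}$ and $\sigma_n(\omega):=0$ on $\{\tau=0\}$. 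Then $\sigma_n<\tau$ and $\sigma_n\uparrow\tau$ on $\{\tau>0\}$, so by the existence of left limits $y_{\sigma_n(\omega)}\to y_{\tau(\omega)-}$ there, while on $\{\tau=0\}$ the value $y_{0-}=0$ is assigned by convention; the resulting function is jointly measurable.

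The main obstacle is the left-limit case: the approximation must satisfy $\sigma_n<\tau$ strictly so as to pick up the left limit rather than the value itself at times where $\tau$ coincides with a jump of $y$, and $\{\tau=0\}$ must be treated separately to respect the convention $y_{0-}=0$.
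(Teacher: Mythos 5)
Your proof is correct, but it follows a genuinely different route from the paper. The paper uses a Carath\'eodory-type argument: it forms the right-averaged functionals $g^\nu(\omega,y)=\nu^{-1}\int_{\tau(\omega)}^{\tau(\omega)+1/\nu}y_s\,ds$, which are measurable in $\omega$ and \emph{continuous} in $y$ for the Skorokhod topology (via dominated convergence, uniform boundedness and pointwise convergence off a countable set), concludes joint measurability from a Carath\'eodory lemma \cite[Lemma 4.51]{Aliprantis2005}, and then lets $\nu\to\infty$ using right-continuity; the left-limit case is treated analogously. You instead rely on the fact, recalled just before the lemma, that $\B(D)$ is generated by point evaluations \cite[Theorem 12.5]{bil99}, so each $y\mapsto y_t$ (and hence $y\mapsto y_{t-}$ as a pointwise limit) is Borel; you then handle countably-valued $\tau$ by the indicator decomposition and pass to general $\tau$ by dyadic approximation from the right for $y_{\tau}$ and strictly from the left for $y_{\tau-}$, with $\{\tau=0\}$ treated separately to respect $y_{0-}=0$. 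Your discretization details check out: $\lceil 2^n\tau\rceil/2^n$ decreases to $\tau$ along the nested dyadic grids, and $\sigma_n=(\lceil 2^n\tau\rceil-1)/2^n$ satisfies $\sigma_n<\tau$ and $\sigma_n\to\tau$, which is exactly what the existence of left limits requires. What each approach buys: yours is more elementary, needing nothing about the Skorokhod topology beyond the identification of its Borel $\sigma$-algebra with the evaluation $\sigma$-algebra, and avoiding both the Carath\'eodory lemma and the continuity verification; the paper's avoids discretizing $\tau$ and exhibits the evaluation as a limit of Skorokhod-continuous maps, a device that adapts to other path functionals. Both arguments gloss the endpoint cases ($\tau=T$, and $T=\infty$) in the same way, so there is no gap on that score.
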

\begin{proof}
We assume that $0<\tau<T$ almost surely, the extension to general $\tau$ is evident. Defining $g^\nu:\Omega\times D\to\reals$ by
\[
g^\nu(\omega,y) := \frac{1}{\nu}\int_{\tau(\omega)}^{\tau(\omega)+1/\nu} y_sds,
\]
$g^\nu(\cdot,y)$ is measurable and $g^\nu(\omega,\cdot)$ is continuous. Here the continuity follows from the dominated convergence and the facts that convergence in the Skorokhod topology implies uniform boundedness and pointwise convergence outside a countable set. By \cite[Lemma 4.51]{Aliprantis2005}, $g^\nu$ is jointly measurable, so we get the first claim from $\lim_{\nu \to \infty} g^\nu(\omega,y) = y_{\tau(\omega)}$. The second claim can be proved similarly.
\end{proof}

The next theorem is of independent interest of the Skorokhod topology and it will be crucial for the main results of the paper. In general, $D(\Gamma)$ in the theorem is not closed-valued in the Skorokhod topology, since sequences therein need not converge pointwise.  By $\tau_r$ we denote the topology on $\reals$ generated by the right-open intervals $\{[s, t) \mid s<t\}$.
\begin{theorem}\label{thm:D(S)}
Assume that $\Gamma:\Omega \times [0,T] \tos \reals^d$ is a  $\F \otimes \B([0,T])$-measurable closed convex-valued mapping. Then, $D(\Gamma):\Omega \tos D$ has a measurable graph.
\end{theorem}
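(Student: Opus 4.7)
The plan is to express the graph of $D(\Gamma)$ as the complement of a projection of a jointly measurable set in $\Omega\times D\times[0,T]$, and then appeal to the Dellacherie--Meyer measurable projection theorem. First, I would apply Lemma~\ref{lem:jointmeas} on the enlarged parameter space $(\Omega\times[0,T],\F\otimes\B([0,T]))$ with the trivially measurable ``random time'' $\tau(\omega,t):=t$ (or, equivalently, repeat its proof verbatim in the deterministic variable $t$): since the argument there uses no structure of $\Omega$ beyond measurability of $\tau$, it yields joint measurability of the evaluation map $(\omega,t,y)\mapsto y_t$ from $\Omega\times[0,T]\times D$ into $\reals^d$ with respect to $\F\otimes\B([0,T])\otimes\B(D)$.

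Second, because $\Gamma$ is $\F\otimes\B([0,T])$-measurable and closed-valued, standard facts on measurable set-valued mappings yield that $\{(\omega,t,x):x\in\Gamma_t(\omega)\}\in\F\otimes\B([0,T])\otimes\B(\reals^d)$. Composing with the previous step, the function
\[
\phi(\omega,y,t):=d(y_t,\Gamma_t(\omega))
\]
is $\F\otimes\B(D)\otimes\B([0,T])$-measurable. Since $\Gamma_t(\omega)$ is closed, $y\in D(\Gamma)(\omega)$ if and only if $\phi(\omega,y,t)=0$ for every $t\in[0,T]$, so the complement of $\{(\omega,y):y\in D(\Gamma)(\omega)\}$ in $\Omega\times D$ equals the projection onto $\Omega\times D$ of $\{(\omega,y,t):\phi(\omega,y,t)>0\}$, a set in $\F\otimes\B(D)\otimes\B([0,T])$.

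Third, I would invoke the Dellacherie--Meyer measurable projection theorem \cite[III.44--45]{dm78} (the same tool used in the proof of Theorem~\ref{thm:ifs}): the fibered factor $[0,T]$ is Polish, and the base $\sigma$-algebra $\F\otimes\B(D)$ inherits the required completeness from the $P$-completeness of $\F$ under the usual hypotheses. This yields that the above projection lies in $\F\otimes\B(D)$, whence the graph of $D(\Gamma)$ is $\F\otimes\B(D)$-measurable.

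The main obstacle is exactly this last projection step. As the authors emphasize, $D(\Gamma)$ is not closed-valued in the Skorokhod topology, so the standard Castaing-style route of establishing graph measurability through countable families of measurable selections with closed graph is unavailable, and one is forced to work through the projection theorem. The delicate point is to verify that projecting over the Polish factor $[0,T]$ yields a set in the not automatically complete product $\sigma$-algebra $\F\otimes\B(D)$ rather than in some genuinely larger universal or probabilistic completion; reconciling the $P$-completeness of $\F$ with the Suslin Borel structure of $D$ is the technical heart of the argument.
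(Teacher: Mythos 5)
The projection step at the end of your argument is a genuine gap, and it is precisely the point where the strategy breaks down. The Dellacherie--Meyer projection theorem gives that the projection of a set in $\A\otimes\B([0,T])$ onto the base lies in $\A$ only when $\A$ is the $\sigma$-algebra of a \emph{complete} ($\sigma$-finite) measure space. Your base $\sigma$-algebra is $\F\otimes\B(D)$, and completeness is not inherited by product $\sigma$-algebras: even $\F\otimes\B(\reals^d)$ with $\F$ $P$-complete is not complete for any product measure, and here there is not even a distinguished measure on $D$ with respect to which one could complete. What the projection argument actually yields is that the complement of the graph of $D(\Gamma)$ is $\F\otimes\B(D)$-analytic, hence the graph is co-analytic and universally measurable relative to $\F\otimes\B(D)$ --- but projections of product-measurable sets over a Polish factor are in general \emph{not} product-measurable (this is the classical Suslin phenomenon: projections of Borel sets need not be Borel), so you cannot conclude membership in $\F\otimes\B(D)$ itself, which is what the theorem asserts and what is used later (e.g.\ the interchange rule, Theorem~\ref{thm:ifs}, is applied with $\Xi=\Omega$, $X=D$ and requires genuine $\F\otimes\B(D)$-measurability of $\delta_{D(S)}$). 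Your own closing sentence identifies this as the delicate point, but the assertion that ``$\F\otimes\B(D)$ inherits the required completeness from the $P$-completeness of $\F$'' is false, and there is no evident way to repair the projection route without weakening the conclusion.

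The paper's proof avoids projections altogether by reducing the uncountable constraint ``$y_t\in\Gamma_t(\omega)$ for all $t$'' to countably many jointly measurable constraints: writing $\Gamma_t$ as the intersection of the rational half-spaces $H_q$ containing it, one considers the measurable sets $A_q=\{(\omega,t)\mid\Gamma_t(\omega)\subset H_q\}$, builds random times $t^{\nu,j}_q$ whose values are $\tau_r$-dense in $A_q$ (via a time-reversed d\'ebut argument and measurable selection), and shows that a \cadlag\ $y$ satisfying $y_{t^{\nu,j}_q}\in\Gamma_{t^{\nu,j}_q}$ for all $q,\nu,j$ automatically satisfies $y_t\in\Gamma_t$ for every $t$, by right-continuity of $y$ and closed convexity of $\Gamma_t$. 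Each of the countably many conditions is $\F\otimes\B(D)$-measurable by Lemma~\ref{lem:jointmeas}, so the graph is a countable intersection of measurable sets --- landing exactly in $\F\otimes\B(D)$ with no completeness hypotheses. Your first two steps (joint measurability of the evaluation map and of $(\omega,y,t)\mapsto d(y_t,\Gamma_t(\omega))$) are fine; it is only the final projection that cannot deliver the stated conclusion.
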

\begin{proof}
Let $(t^\nu)_{\nu=1}^\infty$ be a dense sequence in $[0,T]$ containing $T$. For each rational vector $q\in\mathbb Q^{1+d}$, let $H_{q}=\{x\in\reals^d\mid (q_1,\dots,q_d)\cdot x\le q_0\}$ be the associated ``rational'' half-space. By \cite[Theorem 14.3(i)]{rw98}, the sets $A_{q}:=\{(\omega,t)\mid \Gamma_t(\omega)\subset H_{q}\}$, are measurable, so, by time-reversing the argument in the proof of \cite{hwy92}[Theorem 4.2], each
\[
t^{\nu}_q:=\sup\{t | t \le t^\nu, t\in A_{q}\}
\]
is a random time. By the measurable selection theorem, for each $t^{\nu}_q$, there exists random times $t^{\nu,j}_q\in A_q$ such that $t^{\nu}_q-1/j\le t^{\nu,j}_q \le t^{\nu}_q$. We redefine $t^{\nu,j}_q=t^{\nu}_q$ on $\{t^{\nu}_q\in A_q\}$. It suffices to show
\[
D(\Gamma) = \{y\in D \mid y_{t^{\nu,j}_q} \in  \Gamma_{t^{\nu,j}_q} \forall\ q,\nu,j\},
\]
since the right side has a measurable graph, by Lemma~\ref{lem:jointmeas}. In the rest of the proof, we may argue pathwise, so we fix $\omega$.
	
We show first that, for each $q$, $(t^{\nu,j}_q)_{j,\nu=1}^\infty$ is $\tau_r$-dense in $A_q$. Let $t\in A_q$. If there exists $t^\nu>t$ such that $(t,t^\nu)\cap A_q=\emptyset$, then $t=t^{\nu}_q$. Otherwise, there is a subsequence $t^{\nu'}\searrow t$ such that $(t,t^{\nu'})\cap A_q\ne\emptyset$ for each $\nu'$. Thus $t<t^{\nu'}_q\le t^{\nu'}$. For $j(\nu')$ large enough, $t \le t^{\nu',j(\nu')}_q \le t^{\nu'}_q \le t^{\nu'}$, so we have eventually $t^{\nu',j(\nu')}_q\searrow t$.
	
Assume now that $y\in D$ satisfies $y_{t^{\nu,j}_q}\in \Gamma_{t^{\nu,j}_q}$ for each $t^{\nu,j}_q$. Fix $t$ and choose $q$  such that $\Gamma_t\subset H_q$ (note that $H_q=\reals^d$ for $q=0$). Since $t \in A_{q}$ and $(t^{\nu,j}_q)_{\nu,j=1}^\infty$ is $\tau_r$-dense in $A_{q}$, there is a sequence $(t^{\nu_i,j_i}_q)$ converging to $t$  in $\tau_r$ such that $t^{\nu_i,j_i}_q\in A_{q}$. Thus
\[
y_t=\lim y_{t^{\nu_i,j_i}_q}\in H_q.
\]
We obtained
\[
y_t \in \bigcap_q\left\{ H_q \midb \Gamma_t\subset H_q\right\},
\]
where the right side equals $\Gamma_t$, since every closed convex set is an intersection of rational half-spaces containing the set. 
\end{proof}

The following corollary extends \cite[Lemma~5]{pp18d}, where pathwise inner semicontinuity of $S$ was assumed, with a completely different proof. The subspace $C\subset D$ of continuous functions is closed in $D$ and its relative topology w.r.t. the Skorokhod topology is generated by the supremum norm.
\begin{corollary}
Assume that $\Gamma$ is a measurable closed convex-valued  stochastic mapping. Then $C(\Gamma)$ is measurable and closed convex-valued.
\end{corollary}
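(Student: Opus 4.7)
The plan is to reduce the statement to Theorem~\ref{thm:D(S)} via the tautology
\[
C(\Gamma)(\omega)=D(\Gamma)(\omega)\cap C.
\]

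First I would verify that $C(\Gamma)$ is closed convex-valued pathwise. Convexity is immediate from the convexity of each $\Gamma_t(\omega)$. For closedness, the remark just before the corollary identifies the relative Skorokhod topology on $C$ with the supremum norm topology, so it suffices to take a uniformly convergent sequence $(y^n)$ in $C(\Gamma)(\omega)$ with limit $y\in C$; uniform convergence gives pointwise convergence, whence $y_t=\lim_n y^n_t\in\Gamma_t(\omega)$ for every $t$ by closedness of $\Gamma_t(\omega)$.

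Next I would deduce measurability from Theorem~\ref{thm:D(S)}. The graph of $C(\Gamma)$ equals $\mathrm{gph}\,D(\Gamma)\cap(\Omega\times C)$ and is therefore $\F\otimes\B(D)$-measurable, since $\mathrm{gph}\,D(\Gamma)$ is measurable by Theorem~\ref{thm:D(S)} and $C$ is closed, hence Borel, in $(D,\tau_S)$. Because $C$ is a Borel subset of the Suslin space $(D,\tau_S)$, it is itself Suslin with $\B(C)=\B(D)\cap C$, and the completeness of the stochastic base $(\Omega,\F,P)$ lets me invoke the projection theorem: for every open $O\subset C$,
\[
\{\omega\mid C(\Gamma)(\omega)\cap O\ne\emptyset\}=\mathrm{proj}_\Omega\bigl(\mathrm{gph}\,C(\Gamma)\cap(\Omega\times O)\bigr)
\]
is $\F$-measurable, which is precisely the required measurability of $C(\Gamma)$.

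I expect no genuine obstacle beyond Theorem~\ref{thm:D(S)} itself; the remaining work is routine bookkeeping that carries the Suslin/Borel structure of $(D,\tau_S)$ to its closed subspace $C$, together with a standard analytic-projection argument using completeness of $\F$. The only mild subtlety is being careful that the two topologies under which one can view $C$ (relative Skorokhod vs.\ supremum norm) generate the same Borel $\sigma$-algebra, which is exactly the content of the paper's remark preceding the corollary.
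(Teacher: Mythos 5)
Your proof is correct and follows the same overall structure as the paper's: pathwise closed convexity of $C(\Gamma)$, then graph measurability obtained by intersecting the measurable graph of $D(\Gamma)$ (Theorem~\ref{thm:D(S)}) with $\Omega\times C$, using that $C$ is Borel in $(D,\tau_S)$ and that the relative Skorokhod and supremum-norm topologies on $C$ coincide. The only divergence is the final step: the paper passes from graph measurability to measurability by citing \cite[Theorem III.30]{cv77}, which applies to graph-measurable, closed \emph{nonempty}-valued mappings into Polish spaces, and therefore needs an extra case distinction, replacing $\Omega$ by $\dom C(\Gamma)$ (itself $\F$-measurable by the projection theorem) when $C(\Gamma)$ may have empty values. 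You instead apply the projection theorem directly to $\mathrm{gph}\,C(\Gamma)\cap(\Omega\times O)$ for each open $O\subset C$; this uses completeness of $\F$ and the Suslin structure in exactly the same way, but requires neither closed-valuedness nor nonemptiness for the measurability part, so it dispenses with the paper's final patch. Both routes are sound; yours is marginally more self-contained, the paper's is shorter by delegating to the standard characterization of measurable closed-valued maps.
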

\begin{proof}
Since $\Gamma$ is closed convex-valued, $C(\Gamma)$ is closed convex-valued as well. By Theorem~\ref{thm:D(S)}, $D(\Gamma)$ has a measurable graph, so $C(\Gamma)$ has a measurable graph.  For complete $\F$, graph measurable and closed nonempty-valued mappings to Polish spaces are measurable \cite[Theorem III.30]{cv77}. If $C(\Gamma)$ is not nonempty-valued, we may apply the above with $\Omega$ replaced by $\dom C(\Gamma)$, which is $\F$-measurable by the projection theorem.
\end{proof}

\section{Integral functionals of \cadlag\ processes}\label{sec:ifcp}
We denote by $L^0_+(M)$ the space of nonnegative random Radon measures which are measurable in the sense of  \cite[VI.86]{dm82} and by $L^1_+(M)$ elements  $\mu \in L^0_+(M)$ such that the random variable $\mu(\cdot,[0,T])$ belongs to the space $L^1(P)$ of $P$ integrable functions. A random measure $\mu \in L^{1}_+(M)$ is  optional if $E \int v d \mu = E \int \op v d \mu$ for each bounded measurable process $v$, where $\op v$ is the optional projection of $v$. For any subset $A \subset M$ we denote by $L^{1}_+(A)$ elements $\mu$ of $L^1_+(M)$ such that $\mu(\omega,) \in A$ almost surely. We set $L^{1}(A)=L^{1}_+(A)-L^{1}_+(A)$. 

A normal integrand $h:\Omega\times[0,T]\times\reals^d\rightarrow\ereals$ on $\reals^d$ is {\em optional} (resp. {\em predictable}) if its epigraphical mapping is optional (resp. predictable). In what follows, we use the qualifier $a.s.e.$ (almost surely everywhere) for a property satisfied outside an evanescent set.

Let $\D$ be a linear subspace of adapted $\cadlag$ processes.
\begin{assumption}\label{ass:ifocp}
The convex normal integrand $h$ is optional and
\begin{enumerate}	
\item $(t,x)\to h_t(x,\omega)$ satisfies the Assumption \ref{ass:clS}  almost surely,
\item there exists an optional process  $x$ and  nonnegative measurable process $\alpha$ with $E\int |x|  | y |  d\mu<\infty$ for all $y\in\D$ and  $E\int\alpha d\mu<\infty$ such that
\[
h_t(y,\omega)\ge x_t(\omega)\cdot y-\alpha_t(\omega)\quad \text{a.s.e.}
\]
\end{enumerate}
\end{assumption}

Throughout, $ L^{0}(D)$ denotes the space of \cadlag-valued random variables $y:\Omega \to D$ which are $(\F,\B(D))$-measurable, $ L^{0}(D,S)$ the elements of $L^0(D)$ that are almost surely selections of $S$, and $L^{\infty}(D)$ the elements $y \in L^{0}(D)$ such that $\left\| y \right\|_{\infty}\in L^\infty$. We set $\D(S)=\D\cap L^0(D,S)$ and $L^{i}(D,S)=L^{i}(D) \cap L^0(D,S)$ for $i \in \{0,\infty\}$. Let $\D^\infty$ be the space of bounded adapted \cadlag\ processes and $\D^{\infty}(S):= \D^\infty\cap L^0(D,S)$.



\begin{theorem}\label{thm:ifcp}
Let $\D$ be a space of adapted \cadlag\ processes containing $\D^\infty$. Under Assumption~\ref{ass:ifocp},
\[
\inf_{ y  \in \D} \left[ E I_{ h }(y) + \delta_{\D(S)}(y) \right]=E\left[ \int \inf_{x\in\reals^d} h (x)d\mu\right]
\]
as soon as the left side is less than $+\infty$.
\end{theorem}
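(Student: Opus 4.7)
The direction $\ge$ is a direct consequence of the pathwise bound $I_h(\omega, y(\omega)) \ge \int \inf_x h_t(x, \omega) d\mu_t(\omega)$ for every $y \in \D(S)$; the right-hand side is $\F$-measurable by the projection argument at the start of the proof of Theorem~\ref{thm:ifs}, and the lower bound in Assumption~\ref{ass:ifocp}(2) keeps its negative part integrable (via $h_t(y,\omega) \ge x_t(\omega) \cdot y - \alpha_t(\omega)$ specialized at $y = 0$). Integrating and taking the infimum over $y \in \D(S)$ yields the inequality.

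For $\le$, define $\tilde h \colon \Omega \times D \to \ereals$ by
\[
\tilde h(\omega, y) := I_h(\omega, y) + \delta_{D(S(\omega))}(y).
\]
Joint $\F \otimes \B(D)$-measurability of $\tilde h$ follows from Lemma~\ref{lem:ni1} (for $I_h$) and Theorem~\ref{thm:D(S)} (for the graph of $D(S)$). The deterministic Theorem~\ref{thm:ifcadlag}, applicable pathwise under Assumption~\ref{ass:ifocp}(1), gives
\[
\inf_{y \in D} \tilde h(\omega, y) = \int \inf_{x \in \reals^d} h(x,\omega) d\mu(\omega)
\]
for a.e.\ $\omega$, once one knows the pathwise left side is finite a.s.\ (which follows from the assumed finiteness of the stochastic left side, since any $y_0 \in \D(S)$ with $EI_h(y_0) < \infty$ yields $I_h(\omega, y_0(\omega)) < \infty$ a.s.). Applying Theorem~\ref{thm:ifs} with the Suslin space $D$ under the Skorokhod topology, the $P$-completion of $\F$, and the decomposable space $\X = L^0(D)$, one obtains
\[
\inf_{y \in L^0(D,S)} E I_h(y) = E\!\int \inf_{x} h(x) d\mu.
\]

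The principal obstacle is to pass from this infimum over measurable $D$-valued random variables to the infimum over adapted \cadlag\ processes in $\D(S) \supset \D^\infty(S)$. The space $\D^\infty$ is not decomposable in the sense of Theorem~\ref{thm:ifs}, since patching an adapted process with a non-adapted bounded measurable $D$-valued function destroys adaptedness, so a direct appeal to Theorem~\ref{thm:ifs} over $\D^\infty$ is unavailable. Here the optionality of $h$ in Assumption~\ref{ass:ifocp} is decisive. Combined with the pathwise closure $D(S(\omega)) = \cl(\dom I_{h(\omega)} \cap D(S(\omega)))$ in the supremum norm provided by Assumption~\ref{ass:clS}(2), one should be able to extract a near-minimizing \emph{optional} $\reals^d$-valued selection via the optional section theorem for optional normal integrands, and then realize it by an adapted \cadlag\ process in $\D^\infty(S) \cap \dom I_h$ using the sup-norm approximation from Assumption~\ref{ass:clS}(2). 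The integrability lower bound in Assumption~\ref{ass:ifocp}(2), which controls $EI_h$ on $\D$ and produces a reference adapted process, is what ensures this approximation does not inflate the expected integral. Verifying that this construction lands in $\D^\infty(S)$ with cost within $\varepsilon$ of the measurable infimum, in the presence of the optional/pathwise interplay between $h$ and $\mu$, is the delicate step that completes the proof.
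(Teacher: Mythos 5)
Your reduction to the non-adapted problem is sound and is essentially the paper's first step: joint $\F\otimes\B(D)$-measurability of $I_h+\delta_{D(S)}$ via Lemma~\ref{lem:ni1} and Theorem~\ref{thm:D(S)}, the interchange rule of Theorem~\ref{thm:ifs} over a decomposable space of $D$-valued random variables, and the pathwise Theorem~\ref{thm:ifcadlag} together give $\inf_{y\in L^0(D,S)} EI_h(y)=E\int\inf_{x}h(x)d\mu$ (the paper works with $L^\infty(D)$ and removes the resulting boundedness restriction at the end by translating the integrand, $\bar h_t(y,\omega):=h_t(y+\bar y_t(\omega),\omega)$ for an arbitrary $\bar y\in\D(S)\cap\dom EI_h$; that difference is immaterial).

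The genuine gap is precisely the step you flag as ``delicate'' and leave unproved: passing from the infimum over non-adapted selections to the infimum over $\D(S)$ (or $\D^\infty(S)$). The route you sketch --- extract a near-minimizing optional $\reals^d$-valued selection by a section theorem and then ``realize'' it as an adapted \cadlag\ selection using the sup-norm closure in Assumption~\ref{ass:clS} --- does not work as stated: a measurable near-minimizer of $x\mapsto h_t(x,\omega)$ has no path regularity at all, and Assumption~\ref{ass:clS} only asserts that $D(S)$ is the sup-norm closure of $\dom I_h\cap D(S)$ and that $S_t$ is the closure of evaluations of \cadlag\ selections; it gives no mechanism for approximating an arbitrary optional process by an adapted \cadlag\ selection without increasing $EI_h$. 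The paper restores adaptedness by a completely different device: for $y\in L^\infty(D,S)$ it passes to the optional projection $\op y$, which is again a bounded adapted \cadlag\ process by \cite[Theorem VI.47]{dm82}, remains a selection of $S$ by the Jensen inequality for optional convex-valued mappings \cite[Theorem~9]{kp17} together with \cite[Corollary~4]{kp17}, and satisfies $EI_h(\op y)\le EI_h(y)$ by Jensen's inequality for the convex optional integrand $h$ (Lemma~\ref{lem:jin}); this is exactly where the optionality and convexity of $h$ and the lower bound in Assumption~\ref{ass:ifocp}(2) enter. This yields $\inf_{y\in\D^\infty(S)}EI_h(y)=\inf_{y\in L^\infty(D,S)}EI_h(y)$, which is the identity your proposal is missing; without an argument of comparable force the proof is incomplete.
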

\begin{proof}
Assume first that $E I_{ h }$ is finite for some $y\in\D^\infty(S)$. By Lemma~\ref{lem:ni1} $I_{ h }$ is $\F \otimes \B(D)$-measurable while $\delta_{D(S)}$ is $\F \otimes \B(D)$-measurable due to Theorem \ref{thm:D(S)}. Then, $I_{ h }+\delta_{D(S)}$ is $\F \otimes \B(D)$-measurable and 
\begin{equation*}
\begin{split}
\inf_{ y  \in L^\infty(D)} E[I_{ h }( y ) + \delta_{D(S)}(y)]
&= E\left[ \inf_{x\in D} \left\lbrace \int h (x)d\mu + \delta_{D(S)}(x)\right\rbrace \right] \\
&= E\left[ \inf_{x\in D(S)}  \int h (x)d\mu \right] \\
&= E\left[ \int \inf_{x\in\reals^d}  h (x)d\mu\right],
\end{split} 
\end{equation*}
where the first equality follows from Theorem~\ref{thm:ifs}, the second is clear,  and the third follows from Theorem~\ref{thm:ifcadlag}. On the other hand 
\begin{equation*}
\begin{split}
\inf_{ y \in\D^\infty(S)} EI_{ h }( y ) 
&=  \inf_{ y  \in L^\infty(D)}\left[  EI_{ h }(\op  y ) + \delta_{D(S)}(\op y) \right]\\
&\leq \inf_{  y  \in L^\infty(D,S)} EI_{ h }( y ) \\
&\leq  \inf_{ y \in\D^\infty(S)} EI_{ h }( y ),
\end{split}
\end{equation*}
where in the first equality we used the fact that optional projections of bounded $\cadlag$ processes are bounded and $\cadlag$ \cite[Theorem VI.47]{dm82} and that optional projections of selections are again selections; see \cite[Corollary~4]{kp17}. In the first inequality we  applied  Jensen's inequality Lemma \ref{lem:jin} and Jensen's inequality for optional set-valued mappings \cite[Theorem~9]{kp17}.

Assume now that $E I_{ h }$ is finite for arbitrary $\bar y\in\D(S)$. Then
\[
\bar h_t(y,\omega):=h_t(y+\bar y_t(\omega),\omega)
\]
satisfies Assumption~\ref{ass:ifocp} and the corresponding integral functional is finite at the origin. Thus the result follows from the first part.
\end{proof}

Next, we formulate a variant of the above result for adapted $\caglad$ processes.  A random measure $\tilde \mu \in L^{1}_+(M)$ is  predictable if $E \int v d \mu = E \int \pp v d \mu$ for each bounded measurable process $v$, where $\pp v$ is the predictable projection of $v$. Let $\tilde \mu  \in L^{1}_+(M)$ be a nonnegative predictable random Radon measure, $\tilde h:\Omega\times[0,T]\times\reals^d\to\ereals$ a convex normal integrand on $\reals^d$.  Let $\D_l$ be a linear subspace of adapted $\caglad$ processes.
\begin{assumption}\label{ass:ifpcp}
The convex normal integrand $h$ is predictable and
\begin{enumerate}	
\item $(t,x)\to h_t(x,\omega)$ satisfies the Assumption \ref{ass:clSp} almost surely,
\item there exists a predictable process $x$ and nonnegative measurable process $\alpha$ with $E\int |x|| y |  d\tilde \mu<\infty$ for all $y\in\D_l$ and  $E\int\alpha d\mu<\infty$ such that
\[
\tilde h_t(y,\omega)\ge x_t(\omega)\cdot y-\alpha_t(\omega)\quad \text{a.s.e.}
\]
\end{enumerate}
\end{assumption} 

We denote the space of  bounded adapted \caglad\ processes by $\D_l^\infty$ and by $\D_{l}(\tilde S)$ the elements of $\D_{l}$ which are almost surely selections of $\tilde S$.

\begin{theorem}\label{thm:ifpcp}
Let $\D_l$ be a space of adapted $\caglad$ processes containing $\D_l^\infty$. Under Assumption~\ref{ass:ifpcp},
\[
\inf_{ y  \in\D_l} [ E I_{ \tilde h }(y) + \delta_{\D_l(\tilde S)}(y)]=E\left[ \int \inf_{x\in\reals^d} \tilde h (x)d \tilde \mu\right]
\]
as soon as the left side is less than $+\infty$.
\end{theorem}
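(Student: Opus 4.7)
The plan is to follow the proof of Theorem~\ref{thm:ifcp} essentially line by line, replacing optional objects by their predictable counterparts, the space $D$ by $D_l$, and invoking Theorem~\ref{thm:ifcaglad} in place of Theorem~\ref{thm:ifcadlag}. As a first reduction, I would assume $E I_{\tilde h}$ is finite at some $\bar y\in \D_l^\infty(\tilde S)$ by translating the integrand: $\tilde h_t(y+\bar y_t(\omega),\omega)$ still satisfies Assumption~\ref{ass:ifpcp} (the coercivity term $\alpha$ absorbs the translation and Assumption~\ref{ass:clSp} is preserved for the translated $\tilde S$), after which the general case follows as in the last paragraph of the proof of Theorem~\ref{thm:ifcp}.

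Before invoking the abstract interchange rule Theorem~\ref{thm:ifs}, I would need \caglad\ analogs of Lemma~\ref{lem:ni1} and Theorem~\ref{thm:D(S)}: namely joint $\F\otimes\B(D_l)$-measurability of $I_{\tilde h}$ on $\Omega\times D_l$, and graph measurability of $D_l(\tilde S):\Omega\tos D_l$, where $D_l$ carries a Skorokhod-type topology under which it is Suslin. These should follow either by repeating the arguments of Section~\ref{sec:graph} verbatim (with the roles of left and right limits interchanged), or by transport through the time-reversal homeomorphism $y\mapsto (t\mapsto y_{(T-t)+})$ between $D_l$ and $D$ under their Skorokhod topologies. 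With these in hand, applying Theorem~\ref{thm:ifs} on the decomposable space $L^\infty(D_l)$ and then the pathwise interchange rule Theorem~\ref{thm:ifcaglad} gives
\[
\inf_{y\in L^\infty(D_l)} E[I_{\tilde h}(y)+\delta_{D_l(\tilde S)}(y)] = E\left[\int \inf_{x\in\reals^d}\tilde h(x)d\tilde\mu\right].
\]

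The final step descends from $L^\infty(D_l)$ to the adapted class $\D_l^\infty(\tilde S)$ via predictable projection $\pp y$, mirroring the optional-projection sandwich in Theorem~\ref{thm:ifcp}. This requires three predictable counterparts of the facts used there: that predictable projections of bounded \caglad\ processes remain bounded and \caglad\ (predictable analog of \cite[Theorem~VI.47]{dm82}); that predictable projections of selections of $\tilde S$ remain selections (predictable analog of \cite[Corollary~4]{kp17}, through Jensen's inequality for predictable set-valued mappings); and Jensen's inequality applied to the predictable random measure $\tilde\mu$, yielding $EI_{\tilde h}(\pp y)\le EI_{\tilde h}(y)$. The sandwich then reads
\[
\inf_{y\in\D_l^\infty(\tilde S)} EI_{\tilde h}(y)=\inf_{y\in L^\infty(D_l)} E[I_{\tilde h}(\pp y)+\delta_{D_l(\tilde S)}(\pp y)] \le \inf_{y\in L^\infty(D_l)\cap L^0(D_l,\tilde S)} EI_{\tilde h}(y) \le \inf_{y\in\D_l^\infty(\tilde S)} EI_{\tilde h}(y),
\]
which together with the identity in the previous paragraph (applied to the predictable projection, using $\tilde\mu\in L^1_+(M)$ predictable) closes the argument.

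The main obstacle is not the deterministic interchange, which is already supplied by Theorem~\ref{thm:ifcaglad}, but rather the careful verification of these predictable analogs. The statement that the predictable projection preserves the \caglad\ property (and boundedness) is less standard in the literature than its optional \cadlag\ counterpart, and transferring the proof of graph measurability in Theorem~\ref{thm:D(S)} to $D_l$ requires fixing a Skorokhod-type topology on $D_l$ under which it is Suslin and point evaluations generate $\B(D_l)$; this is where most of the technical care will be needed.
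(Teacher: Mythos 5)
Your proposal is correct and matches the paper's approach: the paper proves Theorem~\ref{thm:ifpcp} by simply noting that predictable projections of bounded \caglad\ processes are bounded and \caglad\ (citing \cite[Theorem VI.47]{dm82}) and repeating the argument of Theorem~\ref{thm:ifcp}, which is exactly the predictable-projection sandwich, translation reduction, and appeal to Theorems~\ref{thm:ifs} and \ref{thm:ifcaglad} that you spell out. The predictable analogs you flag as needing care (Jensen's inequality via Lemma~\ref{lem:jin}, preservation of selections, and the measurability statements of Section~\ref{sec:graph} transported to $D_l$) are precisely what the paper's ``proved similarly'' implicitly relies on.
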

\begin{proof}
Recalling that predictable projections of bounded $\caglad$ processes are bounded and $\caglad$ \cite[Theorem VI.47]{dm82}, the theorem is proved similarly to Theorem~\ref{thm:ifcp}.
\end{proof}

 The next assumption gives  a necessary compatibility condition so that we get an interchange rule for 
\begin{equation*}
\hat F(y)= E\left[I_h(y) + I_{\tilde h}(y_-)\right]+\delta_{\D(S)}(y)+\delta_{\D_l(\tilde S)}(y_-).
\end{equation*}
\begin{assumption}\label{ass:ifcp}
The predictable measure $\tilde\mu$ is purely atomic and, for every $\epsilon>0$, $ y  \in\dom EI_{ h } \cap  \D(S)$, and $\tilde y\in \dom EI_{\tilde h}\cap \D_l(\tilde S)$, there exists $\check  y  \in \D(S)$  and  $\hat  y  \in \D_l(\tilde S)$ with
\begin{enumerate}
\item $EI_{ h }(\check  y )\le EI_{ h }( y )+\epsilon$ and $\check y_-\in\dom EI_{\tilde h}\cap \D_l(\tilde S)$,
\item $EI_{\tilde h }(\hat  y )\le EI_{ \tilde h }( \tilde y )+\epsilon$ and $\hat y_+\in\dom EI_{h}\cap \D(S)$.
\end{enumerate}
\end{assumption}

A class of adapted $\cadlag$ processes $\D$ is called {\em solid} if $y\in\D$ whenever $|y|\le |\bar y|$ for some $\bar y\in\D$, and {\em max-stable} if $\max \{|y^1|,|y^2|\}\in\D$ whenever $y^1,y^2\in\D$.

\begin{theorem}\label{thm:ircp}
Let $\D$ be a solid max-stable space containing $\D^\infty$ and let $\D_l=\{y_-\mid y\in \D\}$.  Under Assumptions~\ref{ass:ifocp}, \ref{ass:ifpcp} and \ref{ass:ifcp},
\[
\inf_{ y \in\D} \hat F(y)=E\left[\int \inf_{x\in\reals^d} h(x)d\mu + \int \inf_{x\in\reals^d}\tilde h(x)d \tilde\mu\right]
\]
as soon as the left side is less than $+\infty$.
\end{theorem}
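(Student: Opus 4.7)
The lower bound $\inf_{y\in\D}\hat F(y)\ge E[\int\inf_x h\,d\mu+\int\inf_x\tilde h\,d\tilde\mu]$ follows by pointwise comparison: $h_t(y_t)\ge\inf_x h_t(x)$ and $\tilde h_t(y_{t-})\ge\inf_x\tilde h_t(x)$ for every $t$, integrated against the nonnegative measures $\mu$ and $\tilde\mu$, together with nonnegativity of the indicators $\delta_{\D(S)}$ and $\delta_{\D_l(\tilde S)}$.

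For the upper bound, I would fix $\epsilon>0$ and extract near-minimizers for the two integral terms separately. Theorem~\ref{thm:ifcp} applied to the $h$-part yields $y^1\in\D(S)$ with $EI_h(y^1)\le E\int\inf_x h\,d\mu+\epsilon$, and Theorem~\ref{thm:ifpcp} applied to the $\tilde h$-part yields $y^2\in\D_l(\tilde S)$ with $EI_{\tilde h}(y^2)\le E\int\inf_x\tilde h\,d\tilde\mu+\epsilon$. Invoking Assumption~\ref{ass:ifcp}(1) upgrades $y^1$ to $\check y\in\D(S)$ with $EI_h(\check y)\le EI_h(y^1)+\epsilon$ and $\check y_-\in\D_l(\tilde S)\cap\dom EI_{\tilde h}$, so $\check y$ is already $\hat F$-feasible; symmetrically, Assumption~\ref{ass:ifcp}(2) upgrades $y^2$ to $\hat y\in\D_l(\tilde S)$ with $EI_{\tilde h}(\hat y)\le EI_{\tilde h}(y^2)+\epsilon$ and $\hat y_+\in\D(S)\cap\dom EI_h$, so $\hat y_+\in\D$ is $\hat F$-feasible with $(\hat y_+)_-=\hat y$.

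The central task is then to splice $\check y$ and $\hat y_+$ into a single $y\in\D$ that is simultaneously near-optimal for both terms. I would enumerate the atoms $(T_k)$ of $\tilde\mu$, which are predictable stopping times since $\tilde\mu$ is predictable, and pick announcing sequences $T_k^\nu\nearrow T_k$ with $T_k^\nu<T_k$. Define $y^\nu$ to coincide with $\hat y_+$ on $\bigcup_k[T_k^\nu,T_k)$ and with $\check y$ elsewhere, so that $y^\nu_{T_k-}=(\hat y_+)_{T_k-}=\hat y_{T_k}$ and hence $EI_{\tilde h}(y^\nu_-)=EI_{\tilde h}(\hat y)$, which already sits within $2\epsilon$ of the target. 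Pointwise, $y^\nu\in\D(S)$ because both $\check y$ and $\hat y_+$ lie in $\D(S)$, and $y^\nu_-\in\D_l(\tilde S)$ by the analogous property for $\check y_-$ and $\hat y=(\hat y_+)_-$. Membership $y^\nu\in\D$ follows from max-stability applied to $\check y,\hat y_+\in\D$ (giving $\max\{|\check y|,|\hat y_+|\}\in\D$ as a dominating process) together with solidity. As $\nu\to\infty$ the modification region $\bigcup_k[T_k^\nu,T_k)$ shrinks to $\bigcup_k\bigcap_\nu[T_k^\nu,T_k)=\emptyset$, so a dominated-convergence argument—using the affine minorant from Assumption~\ref{ass:ifocp}(2) to produce an integrable bound for $h_t(y^\nu_t)^-$—yields $EI_h(y^\nu)\to EI_h(\check y)$, so that for $\nu$ large enough $\hat F(y^\nu)$ is within $O(\epsilon)$ of the right-hand side.

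The main obstacle is the splicing step: verifying $y^\nu\in\D$ without decomposability, handling overlapping announcing intervals $[T_k^\nu,T_k)$ and $[T_{k'}^\nu,T_{k'})$ (for which disjointness for large $\nu$ must be arranged by a diagonal extraction), and pushing through the dominated convergence for $EI_h(y^\nu)$. The delicate case is when $\mu$ and $\tilde\mu$ share an atom $T_k$: then the $\mu$-mass at $T_k$ is not absorbed by the shrinking intervals and must be treated individually, which is precisely where the one-process-at-a-time form of Assumption~\ref{ass:ifcp}(1)--(2) is needed to absorb the extra $I_h$ cost at that atom without spoiling the global near-optimality.
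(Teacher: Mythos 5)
Your overall strategy is the same as the paper's: the lower bound is the trivial pointwise comparison, and for the upper bound you take near-minimizers from Theorems~\ref{thm:ifcp} and \ref{thm:ifpcp}, upgrade them via Assumption~\ref{ass:ifcp} to cross-feasible processes $\check y\in\D(S)$ with $\check y_-\in\dom EI_{\tilde h}\cap\D_l(\tilde S)$ and $\hat y_+\in\dom EI_h\cap\D(S)$, splice them along predictable times carrying the atoms of $\tilde\mu$, and pass to the limit with a Fatou/dominated-convergence argument whose integrable majorant $\max\{h(\check y),h(\hat y_+)\}+\max\{\tilde h(\check y_-),\tilde h(\hat y)\}$ is exactly what the cross-domain conditions and the lower bounds in Assumptions~\ref{ass:ifocp} and \ref{ass:ifpcp} provide. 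Up to notation this is the paper's proof.

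The genuine gap is in the splicing step itself: you modify near \emph{all} atoms of $\tilde\mu$ simultaneously. Atoms of a Radon measure may accumulate (say masses $2^{-k}$ at times $t^*-1/k$), and then your $y^\nu$ switches between $\hat y_+$ and $\check y$ infinitely often in every left neighborhood of $t^*$, so it has no left limit at $t^*$ unless $(\hat y_+)_{t^*-}=\check y_{t^*-}$; hence $y^\nu$ need not be \cadlag, and both the exact identity $EI_{\tilde h}(y^\nu_-)=EI_{\tilde h}(\hat y)$ and the claim that the modification region $\bigcup_k[T^\nu_k,T_k)$ shrinks to the empty set rest on this flawed construction (with infinitely many intervals the intersection over $\nu$ need not be empty, so the dominated-convergence step for $EI_h(y^\nu)$ is also not secured). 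This is precisely why the paper uses a two-parameter scheme: it splices only at the atoms with $\Delta w\ge 1/\nu$, which are pathwise finitely many on each $(n,n+1]$ because $\tilde\mu(\cdot,[0,T])<\infty$ a.s., lets the announcing index $k\to\infty$ to recover $EI_h(y)$ and the correct left limits at those atoms, and only then lets $\nu\to\infty$ inside a single Fatou argument. A further point you do not address is that ordinary announcing sequences may be finite on the event where the announced predictable time is infinite, which would make the modification set huge; the paper needs the special announcing sequences of Lemma~\ref{lablemmaAlternativeannouncing} combined with a Borel--Cantelli argument to ensure that pathwise only finitely many intervals are nonempty, which is what yields \cadlag\ paths. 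Finally, your closing worry about $\mu$ and $\tilde\mu$ sharing an atom is not where the difficulty lies: the splice uses half-open intervals $[\sigma,\tau)$, so $y^\nu$ agrees with $\check y$ at the atom itself and the corresponding $\mu$-integral is untouched; the role of Assumption~\ref{ass:ifcp} is the cross-domain feasibility and integrability, which you already invoked correctly.
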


\begin{proof}
Let $\epsilon>0$. By Theorems~\ref{thm:ifcp} and \ref{thm:ifpcp} and Assumption~\ref{ass:ifcp}, there exist $y, \tilde y  \in\D(S)$  such that $\tilde  y_{-}  \in \D_l(\tilde S)$, $\tilde y  \in  \dom EI_{h}\cap\D(S)$, $y_-\in\dom EI_{\tilde h}\cap \D_l(\tilde S)$  and
\begin{equation}\label{eq:ifcp2p}
EI_{ h }( y )+ E I_{\tilde h }(\tilde  y _{-})\le E \left[ \int \inf_{x\in\reals^d} h _t(x)d\mu  + \int \inf_{x\in\reals^d}\tilde h _t(x)d\tilde\mu \right] + \epsilon.
	\end{equation}
	
In the next construction we take $T=\infty$, the case $T < \infty$ being more simple. Since $\tilde\mu$ is purely atomic and predictable, the process $w$ defined by
\[
w_t:=\int_{[0,t]} d{ \tilde\mu}
\]
is predictable and purely discontinuous whose jump times belong to the set $A :=\{\Delta w \ne 0\}$. 	For each $\nu=1,2,\dots$ let $A_\nu:=\{\Delta w \ge\nu^{-1}\}$. For each $n=0,1,\ldots,$ the predictable set $A_{\nu} \cap \llparenthesis n,n+1 \rrbracket$ is a union of graphs of the elements of an increasing sequence $(\tau^{\nu,n,j})_{j=1}^\infty$ of predictable times with graphs in $\llparenthesis n,n+1 \rrbracket$. Fix a decreasing sequence of positive real numbers converging  to zero  $\{\epsilon_n\}_{n \in  \nn}$.  Let $\{\sigma^{\nu,n,j,k}\}_{k \in \nn}$ be an announcing sequence for $\tau^{\nu,n,j}$ as in Lemma \ref{lablemmaAlternativeannouncing} below in the interval $(n,n+1]$ with sequence $\{\frac{1}{2^j} \epsilon_k\}_{k \in \nn}$.  We define a process $y^{\nu,k}$ by 
\begin{equation}\label{eq:pasting}
y^{\nu,k}_t(\omega)=\begin{cases}
\tilde  y _t(\omega)\ \ &\text{if } (\omega,t) \in \bigcup_{n=0}^{\infty} \left\{( n,n+1 ] \cap \bigcup_{j=1}^{\infty}[ \sigma^{\nu,n,j,k},\tau^{\nu,n,j} ) \right\} ,\\
y _t(\omega)\ \ &\text{otherwise}. 
\end{cases}
\end{equation}
Outside a null set,  the process $y^{\nu,k}$ has the following properties  
\begin{align}
y^{\nu,k}  & \mbox{ has $\cadlag$ paths}, \label{lim:1}\\
\lim_{k \to \infty} y^{\nu,k} &= y, \mbox{ a.s.e. stationarily}, \label{lim:2}\\ 
\lim_{k \to \infty} y^{\nu,k}_{-} &= y_{-}, \mbox{ a.s.e. stationarily on } A_{\nu}^c, \label{lim:3}\\
\lim_{k \to \infty} y^{\nu,k}_{-} &= \tilde y_{-}, \mbox{ a.s.e. stationarily on } A_{\nu} \label{lim:4},
\end{align}
which we verify next.
  
Fix $\nu$, $n$ and $k$. Let $D_j:=\{\sigma^{\nu,n,j,k}< \infty \}$ and  $B_j:=\{\tau^{\nu,n,j} =\infty \}$. We have $\rpr(D_j \cap B_j) \leq \frac{\epsilon_k}{2^j}$ and so $C:=\limsup_{j \to \infty} D_j \cap B_j$ is a null set due to Borel-Cantelli Lemma. Let $N$ be a null set such that $w_{T}(\omega)<\infty$ for $\omega \in \Omega/N$. Let $M$ be a null set such that on $\Omega /M $ the stopping times $\sigma^{\nu,n,j,k}$ converge to $\tau^{\nu,n,j}$. Let
\[
p^{\nu,n,k}(\omega):= \left\lbrace  t \in [n,n+1] \mid (\omega,t) \in  \bigcup_{j=1}^{\infty}  [\sigma^{\nu,n,j,k}, \tau^{\nu,n,j} ) \right\rbrace.
\]
We get \eqref{lim:1} once we  show that $p^{\nu,n,k}(\cdot)$ is a finite union of  semiopen sets $[a,b)$ on  $\Omega / (C \cup N \cup M)$, and it is actually included on $(n,n+1]$.  For  $\omega \in \Omega / (C \cup N \cup M)$ there exists $j_0$ such that for all $j \geq j_0$ we have $\omega \notin D_j \cap B_j$, since $\omega$ is not an element of $C$. There are two alternatives:
\[
\sigma^{\nu,n,j,k}(\omega) = \infty \mbox{ or } \tau^{\nu,n,j}(\omega) <\infty. 
\]
The second alternative can only happen for a finite number of indexes $j \geq j_0$, since $\omega$ is not an element of $N$.  In the first alternative we clearly have that the interval  $[\sigma^{\nu,n,j,k}(\omega), \tau^{\nu,n,j}(\omega))$ is empty. Thus  
\[
p^{\nu,n,k}(\omega)=\bigcup_{j=1}^{\tilde j}  [\sigma^{\nu,n,j,k}(\omega), \tau^{\nu,n,j}(\omega))
\]
for some $\tilde j$. The set $p^{\nu,n,k}(\omega)$ is included on $(n,n+1]$ since each $\sigma^{\nu,n,j,k}$ is strictly greater than $n$. Now \eqref{lim:1} is established. Other properties of $p^{\nu,n,k}(\cdot)$ on   $\Omega / (C \cup N \cup M)$ are  $p^{\nu,n,k+1} \subset p^{\nu,n,k}$ and $\cap_{k \in \nn} p^{\nu,n,k} = \emptyset$.  Thus \eqref{lim:2} holds.
	
Let 
\[
q^{\nu,n,k}(\omega) := \left\lbrace  t \in (n,n+1] \mid (\omega,t) \in ( n,n+1 ] \cap \bigcup_{j=1}^{\infty}  ( \sigma^{\nu,n,j,k}, \tau^{\nu,n,j} ] \right\rbrace.
\]
Since $p^{\nu,n,k}$ is almost surely a finite union, we have, for $t \in (n,n+1]$
\begin{equation}\label{eq:pastingl}
y^{\nu,k}_{t-}(\omega)=
\left\lbrace
\begin{array}{cl}
\tilde y_{t-}(\omega)  & \mbox{if } t \in q^{\nu,n,k}(\omega)\\
y_{t-}(\omega)  & \mbox{otherwise}.
\end{array}
\right.
\end{equation}
Since also  $q^{\nu,n,k+1} \subset q^{\nu,n,k}$ and $\bigcap_{k=1}^{\infty} q^{\nu,n,k}$ is equal to $\bigcup_{j=1}^{\infty} \llbracket \tau^{\nu,n,j} \rrbracket$ we get  \eqref{lim:3} and \eqref{lim:4}. Now \eqref{lim:2} gives 
\[
\lim_{k\rightarrow\infty} h ( y ^{\nu,k})=  h ( y ), \quad P\text{-a.s. for all } t \in [0, T ],
\]
and, for all $(\omega,t)\in A$,  \eqref{lim:3} and \eqref{lim:4} give 
\[
\lim_{k\rightarrow\infty} \tilde h ( y ^{\nu,k}_-) = \tilde h ( y _{-} 1_{A_\nu^C}+\tilde  y _{-} 1_{A_\nu}).
\]
By the definition of $A_{\nu}$ it follows that 
$\lim_{\nu\rightarrow\infty}\tilde h ( y _{-} 1_{A_\nu^C}+\tilde  y _{-} 1_{A_\nu}) = \tilde h (\tilde  y _{-})$. We have, by \eqref{eq:pasting} and \eqref{eq:pastingl},
\[
h ( y ^{\nu,k}) + \tilde h ( y ^{\nu,k}_-) \le\max\{h (\tilde  y ), h ( y )\}+\max\{\tilde h (\tilde  y _{-}),\tilde h ( y _{-})\},
\]
where the right side is integrable, by the existence of the lower bounds  in Assumptions~\ref{ass:ifocp} and \ref{ass:ifpcp} and by the fact that $E[I_{ h }( y )+I_{ h }(\tilde  y )+  I_{\tilde h }( y _{-})+  I_{\tilde h }(\tilde  y _{-})]<\infty$. By Fatou's lemma,
\[
\limsup_{\nu\rightarrow\infty}\limsup_{k\rightarrow\infty} \left\lbrace EI_{ h }( y ^{\nu,k})+E I_{\tilde h }( y ^{\nu,k}_-)\right\rbrace \le EI_{ h }( y )+E  I_{\tilde h}(\tilde  y _-)
\]
which in combination with \eqref{eq:ifcp2p} finishes the proof.
\end{proof}


\section{Conjugates of integral functionals}\label{sec:main}

This section presents the main results of the article. That is, we characterize the convex conjugates and the subdifferentials of  $F$ and  $\hat F$  defined respectively in \eqref{eq:F} and \eqref{eq:hatF}.  The results are based on the interchange rules developed in the previous sections. We start by specifying appropriate spaces $\D$ and $\hat \M$, of adapted \cadlag\ processes and random measures in duality.

From now on, we assume that $\D$ and $\D_l$ satisfy the conditions in Theorem \ref{thm:ircp} (i.e. $\D$ is a solid max-stable space containing $\D^\infty$ and $\D_l=\{y_-\mid y\in \D\}$), $\hat\M$ is a subspace of 
\[
\{(u,\tilde u)\in L^1(M) \times L^1(\tilde M)\mid u  \mbox{ optional, } \tilde u \mbox{ predictable}\}
\]
 containing $\hat\M^\infty:=\{(u,\tilde u)\in L^\infty(M)\times L^\infty(\tilde M)\mid u  \mbox{ optional, } \tilde u \mbox{ predictable}\}$ and that, for all $y\in\D$ and $(u,\tilde u)\in\hat\M$,
\begin{equation*}
E\left[ \int |y|d|u|+\int|y_-|d|\tilde u|\right] <\infty.
\end{equation*}
In particular, the bilinear form
\[
\langle y,(u,\tilde u)\rangle :=E\left[\int y du + \int y_- d\tilde u\right]
\]
is well-defined on $\D\times\hat\M$. We equip $\D$ and $\hat\M$ with topologies compatible with this bilinear form. The next example shows how many familiar Banach spaces of adapted $\cadlag$ processes together with their duals fit in our setting.
\begin{example}
For $p\in(1,\infty)$, let $\D^p$ be the space of adapted $\cadlag$ processes whose pathwise supremum belongs to $L^p$. When endowed with the norm $\|y\|_{\D^p}= (E\|y\|^p)^{1/p}$, $\D^p$ is Banach space whose dual may be identified with
\[
\hat \M ^q=\{(u,\tilde u)\in L^q(M) \times L^q(\tilde M)\mid u  \mbox{ optional, } \tilde u \mbox{ predictable}\}
\]
for $1/p+1/q=1$; see, e.g., \cite{dm82}. These dual pairs evidently satisfy our assumptions as $\D$ and $\hat\M$. In this setting, $L^p$ can be replaced by the Morse heart of an appropriate Orlicz space; see \cite{ara14}. 

Let $\D^1$ be the space of adapted $\cadlag$ processes of class $(D)$. When endowed with the norm $\sup_{\tau\in\T} E|y_\tau|$, $\D^1$ is a Banach space whose dual can be identified with $\hat \M^\infty$; see, e.g., \cite{pp18b}. This dual pair satisfies our assumptions as well. This setting extends to appropriate spaces with the so called ``Choquet property'' \cite{pp18b}.
\end{example}


We denote $\M=\{ u \mid (u,0)\in\hat\M\}$. Recall the definition of $J_{h^*}$ in \eqref{eq:Jh}.

\begin{lemma}\label{lem:lsc}
We have property 2 in Assumption~\ref{ass:ifocp}  if and only if $EJ_{h^*}(u)$ is finite for some $u\in\M$.  In this case, for every $y\in\D$
\begin{equation}
\label{eq:DualJhstar}
\sup_{u\in\M}\left\{E \int ydu - EJ_{h^*}(u)\right\}=F(y),
\end{equation}
and in particular, $F$ is lower semicontinuous on $\D$.  We have an analogous result for $EJ_{\tilde h^*}(\cdot)$ with respect to Property 2  in Assumption~\ref{ass:ifpcp} and the class $\tilde \M =\{ \tilde u \mid (0,\tilde u)\in\hat\M \}$.
\end{lemma}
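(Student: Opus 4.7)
The plan is to prove the equivalence first, then the duality formula in both directions, with lower semicontinuity and the predictable analogue as corollaries.

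For the equivalence: given $u\in\M$ with $EJ_{h^*}(u)<\infty$, I would take $x:=du^a/d\mu$, which is optional as the Radon--Nikodym derivative of an optional measure with respect to the optional $\mu$, and set $\alpha:=\max\{h^*(x),0\}$. The Fenchel--Young inequality delivers the minorant $h(y)\geq xy-h^*(x)\geq xy-\alpha$, while the standing pairing hypothesis on $\D\times\hat\M$ yields $E\int|x||y|d\mu\leq E\int|y|d|u|<\infty$ for every $y\in\D$. Conversely, given $x$ and $\alpha$ from Property~2, I set $u:=x\,d\mu$; optionality is inherited from $x$ and $\mu$, the test element $1\in\D^\infty\subset\D$ forces $u\in L^1(M)$, and the pointwise inequality $h^*(x)\leq\alpha$ (dual to the minorant) gives $EJ_{h^*}(u)=E\int h^*(x)d\mu\leq E\int\alpha\,d\mu<\infty$.

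For the duality formula, the $\leq$ inequality is pathwise Fenchel--Young. Splitting $u=u^a+u^s$ with densities $x=du^a/d\mu$ and $z=du^s/d|u^s|$, one has $y\cdot x\leq h(y)+h^*(x)$ $\mu$-a.e.; for $y\in\D(S)$, the identity $(h^*)^\infty=\sigma_S$ from \eqref{eq:domrec} yields $y\cdot z\leq\sigma_S(z)=(h^*)^\infty(z)$ $|u^s|$-a.e., so integrating gives $E\int y\,du-EJ_{h^*}(u)\leq EI_h(y)=F(y)$. For the $\geq$ direction with $y\in\D(S)$, I would restrict the supremum to absolutely continuous $u=x\,d\mu$ and apply Theorem~\ref{thm:ifs} to the convex normal integrand $\psi_t(x,\omega):=h^*_t(x,\omega)-y_t(\omega)\cdot x$ on the decomposable space $L^0(\Xi;\reals^d)$ with the $\sigma$-finite measure $m:=E[d\mu]$ on $\Xi=\Omega\times[0,T]$. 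Since $\inf_x\psi_t(x)=-h^{**}_t(y_t)=-h_t(y_t)$ and the finiteness hypothesis of Theorem~\ref{thm:ifs} follows from Property~2 via the reference $x_0$, the interchange produces, for each $\epsilon>0$, a measurable $x$ with $E\int[yx-h^*(x)]d\mu>EI_h(y)-\epsilon$. Replacing $x$ by its optional projection $\op x$ preserves $E\int yx\,d\mu$ (since $y$ and $\mu$ are optional) and, by Jensen's inequality for convex normal integrands under optional projections \cite[Theorem~9]{kp17}, does not increase $E\int h^*(x)\,d\mu$; a convex combination with $x_0$ then secures $\op{x}\,d\mu\in L^1(M)$, so $u:=\op{x}\,d\mu\in\M$ is as required. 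For $y\notin\D(S)$, the non-evanescent random set $\{y_t\notin S_t\}$ admits by measurable selection an optional time $\tau$ and $\F_\tau$-measurable direction $z$ with $y_\tau\cdot z>\sigma_{S_\tau}(z)$ on a set of positive probability, and the singular optional measures $u_n:=n\,z\,\delta_\tau$ then drive $E\int y\,du_n-EJ_{h^*}(u_n)\to+\infty=F(y)$.

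Lower semicontinuity of $F$ is then immediate, since the duality formula expresses $F$ as a supremum of continuous affine functions on $\D$. The main technical obstacle is upgrading the measurable minimizer delivered by Theorem~\ref{thm:ifs} to an optional density with $u=\op{x}\,d\mu\in L^1(M)$: this is handled by the optional-projection-and-Jensen step combined with a convex combination against the reference $u_0=x_0\,d\mu$ from Property~2 to ensure integrability. The analogous statement for $EJ_{\tilde h^*}$ on $\tilde\M$ is proved by the same scheme under Assumption~\ref{ass:ifpcp}, with ``optional'' replaced by ``predictable'', $\op$ by $\pp$, $\mu$ by $\tilde\mu$, and $\D$ by $\D_l$.
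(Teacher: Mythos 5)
Your treatment of the equivalence and of the ``$\le$'' half via Fenchel--Young is the same as the paper's and is fine, but your ``$\ge$'' argument takes a different route (interchange over the raw product $\sigma$-algebra followed by optional projection, in the spirit of the proof of Theorem~\ref{thm:ifcp}) and it has a genuine gap exactly where you convert the measurable maximizer $x$ into an element of $\M$. The $x$ delivered by Theorem~\ref{thm:ifs} is merely measurable and possibly non-integrable, so none of the following is automatic or verified: the existence of $\op x$; the identity $E\int y\,x\,d\mu=E\int y\,\op x\,d\mu$ (the defining property of an optional random measure is stated for bounded measurable integrands, and you have no bound like $E\int|y||x|\,d\mu<\infty$); and the hypotheses of the Jensen inequality for $EI_{h^*}$ (Lemma~\ref{lem:jin} needs a minorant $h^*(\cdot)\ge\cdot\, v-\alpha$ with optional $v$ and $E\int|x||v|\,d\mu<\infty$, which is not available for this $x$). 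Decisively, the claim that ``a convex combination with $x_0$ then secures $\op x\,d\mu\in L^1(M)$'' is false: since $|\lambda\,\op x+(1-\lambda)x_0|\ge\lambda|\op x|-(1-\lambda)|x_0|$, mixing with an integrable density cannot repair $E\int|\op x|\,d\mu=\infty$, nor does it yield membership in the abstract space $\hat\M$ (only assumed to contain $\hat\M^\infty$) or the pairing bound $E\int|z|\,d|u|<\infty$ for all $z\in\D$. So the dual elements over which you maximize are never shown to lie in $\M$, and the supremum you compute is not the one in \eqref{eq:DualJhstar}.

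The paper avoids all of this by never leaving the optional $\sigma$-algebra and never using unbounded densities: it first reduces to the case $EJ_{h^*}(0)<\infty$ by the translation $u\mapsto EJ_{h^*}(u+\bar u)$, and then applies \cite[Theorem~14.60]{rw98} on the decomposable space $L^\infty(\Xi,\O,\theta)$, where $\theta(A)=E\int\one_A\,d\mu$, so only bounded optional densities $w$ (hence measures $w\,d\mu$ pairing with $\D$) are ever needed and no projection or Jensen step arises; the constraint is detected, as in your last step, through bounded $\eta\in L^\infty(\F_\tau)$ at stopping times with $\theta(\llbracket\tau\rrbracket)=0$ together with the optional section theorem and \eqref{eq:domrec}. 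Two further points in your constraint branch need the same normalization: your $u_n=nz\delta_\tau$ need not be $\mu$-singular when $\mu(\{\tau\})>0$, and even when it is, $J_{h^*}(u_n)$ contains the term $\int h^*(0)\,d\mu$, which may be $+\infty$ unless you have first translated so that $EJ_{h^*}$ is finite at the origin (or added $x_0\,d\mu$ to $u_n$); without this the claimed divergence to $+\infty$ fails. Reworking your ``$\ge$'' step with bounded optional densities and the translation reduction closes the gap.
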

\begin{proof}
We only prove the claims for $EJ_{h^*}(u)$, since for $EJ_{\tilde h^*}(u)$ the proof is similar. Assume that Property 2 in Assumption~\ref{ass:ifocp} holds. The measure $u$  defined by  $du= x d\mu$ is an element of $\M$ and  $E J_{h^*}(u)= E  \int h^*(x) d \mu \leq E \int \alpha d \mu < \infty$. Conversely, let $u \in \M$ be such that $EJ_{h^*}(u)$ is finite. Then Property 2 in Assumption~\ref{ass:ifocp} holds  with $\alpha = (h^*(\frac{d u^a}{d\mu}))^+$ and $x= \frac{d u^a}{d\mu}$.

Take $y\in\D$. Assume first that $EJ_{h^*}$ is finite at the origin. Let $\theta$ be the optional measure on the optional $\sigma$-algebra $\O$ given by $\theta(A)=E\int 1_A d\mu$. Then, 
\begin{equation}\label{eq:DualJhstarA1}
\begin{split}
\sup_{u\in\M}\left\{E\int ydu - EJ_{h^*}(u)\right\} &\ge \sup_{w\in L^\infty(\Xi,\O,\theta)}\left\{\int y wd\theta - \int h^*(w)d\theta\right\}\\
&=\int h(y)d\theta\\
&= EI_h(y),
\end{split}
\end{equation}
where the first equality follows from \cite[Theorem 14.60]{rw98}, since the process  identically equal to zero belongs to $L^\infty(\Xi,\O,\theta)$. For a stopping time $\tau$, $h_{\tau}$ is a normal integrand; see the discussion after Theorem~2 in \cite{kp17}. Assume that $\theta( \llbracket \tau \rrbracket)=0$. By \cite[Theorem 14.60]{rw98} again, and by \eqref{eq:domrec}, we get
\begin{equation}\label{eq:DualJhstarA2}
\sup_{u\in\M}\{E \int ydu - EJ_{h^*}(u)\}\ge \sup_{\eta\in L^\infty(\F_\tau)} E[\eta\cdot y_\tau -(h^*_\tau)^\infty(\eta) ] = E\delta_{S_\tau}(y_\tau). 
\end{equation}
As a consequence, for $y$ such that the left hand side of \eqref{eq:DualJhstar} is finite, we get  $y\in\D(S)$ by \eqref{eq:DualJhstarA1} and \eqref{eq:DualJhstarA2}.  Conversely, if $y$ is not an element of $\D(S)$ then \eqref{eq:DualJhstarA2} yields that the left hand side of \eqref{eq:DualJhstar} is infinite. Indeed, the set $\{(\omega,t) \mid S_t(\omega) \cap \{y\} =\emptyset \}$ is optional and we conclude with the optional section theorem.

 We have shown that
\[
\sup_{u\in\M}\left\{ E \int ydu - EJ_{h^*}(u)\right\} \ge F(y)
\]
while the opposite inequality follows from Fenchel's inequality. If $EJ_{h^*}$ is not finite at the origin but at $\bar u$, we apply the above to  $u \to EJ_{h^*}(u + \bar u)$.
\end{proof}

\begin{assumption}\label{ass:ifcp2}
$\hat F$ is finite at some point. The predictable measure $\tilde\mu$ is purely atomic and, for every $\epsilon>0$, $ y  \in\dom EI_{ h } \cap  \D(S)$ and $\tilde y\in \dom EI_{\tilde h}\cap \D_l(\tilde S)$ and $(u,\tilde u)\in\hat\M$, there exists $\check  y  \in \D(S)$ and $\hat  y  \in \D_l(\tilde S)$ with
\begin{enumerate}
\item  $EI_{ h }(\check  y )+E\int \check ydu \le EI_{ h }( y ) + E\int ydu +\epsilon$ and $\check y_-\in\dom EI_{\tilde h}\cap \D_l(\tilde S)$,
\item  $EI_{\tilde h }(\hat  y ) +E\int \hat yd\tilde u \le EI_{ \tilde h }( \tilde y ) +E\int \tilde yd\tilde u+\epsilon$ and $\hat y_+\in\dom EI_{h}\cap \D(S)$.
\end{enumerate}
\end{assumption}

The following is our first main result.
\begin{theorem}\label{thm:cifcp}
Under Assumptions~\ref{ass:ifocp}, \ref{ass:ifpcp} and \ref{ass:ifcp2}, $\hat F$ is a proper lower semicontinuous convex function on $\D$,
\[
\hat F^*(u,\tilde u)=E\left[J_{h^*}(u)+J_{\tilde h^*}(\tilde u)\right]
\]
and $(u,\tilde u)\in\partial \hat F(y)$ if and only if almost surely,
\begin{align*}
du/d\mu &\in \partial h(y)\quad \mu\text{-a.e.},\\
du/d|u^s| &\in \partial^s h(y)\quad |u^s|\text{-a.e.},\\
d\tilde u/d\tilde\mu &\in \partial \tilde h(y_-)\quad \mu\text{-a.e.},\\
d\tilde u/d|\tilde u^s| &\in \partial^s \tilde h(y_-)\quad |\tilde u^s|\text{-a.e.}
\end{align*}
\end{theorem}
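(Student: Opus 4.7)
I would organize the proof around the conjugate formula; lower semicontinuity, properness, and the subdifferential characterization then follow as consequences. Fix $(u,\tilde u)\in\hat\M$ and apply the Radon--Nikodym decompositions $u=u^a+u^s$ with $u^a\ll\mu$, $u^s\perp\mu$, and $\tilde u=\tilde u^a+\tilde u^s$ with respect to $\tilde\mu$, writing $v=du^a/d\mu$, $w=du^s/d|u^s|$, $\tilde v=d\tilde u^a/d\tilde\mu$, $\tilde w=d\tilde u^s/d|\tilde u^s|$. By the duality hypotheses $v$ is optional and $\tilde v$ is predictable, and the integrability bounds in Assumptions~\ref{ass:ifocp} and~\ref{ass:ifpcp} transfer to the perturbed integrands $\bar h_t(x):=h_t(x)-x\cdot v_t$ and $\bar{\tilde h}_t(x):=\tilde h_t(x)-x\cdot\tilde v_t$.

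For the upper bound $\hat F^*(u,\tilde u)\le E[J_{h^*}(u)+J_{\tilde h^*}(\tilde u)]$, I would simply integrate two pointwise inequalities: Fenchel's inequality $y_t\cdot v_t-h_t(y_t)\le h_t^*(v_t)$ handles the $\mu$-absolutely continuous part, while $y_t\in S_t$ pathwise combined with~\eqref{eq:domrec} (giving $\sigma_{S_t}=(h_t^*)^\infty$) yields $y_t\cdot w_t\le(h_t^*)^\infty(w_t)$, with analogous estimates on the predictable side for $y_-$ and $\tilde S$. For the reverse inequality I would first apply Theorem~\ref{thm:ircp} to $\bar h$ and $\bar{\tilde h}$, using $\inf_{x\in\reals^d}[\bar h_t(x)+\delta_{S_t}(x)]=-h_t^*(v_t)$ since $\dom h_t\subset S_t$, to produce near-minimizers achieving the absolutely continuous components. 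A measurable selection on $\supp|u^s|$ (resp.\ $\supp|\tilde u^s|$)---using the graph measurability of Theorem~\ref{thm:D(S)} and the density property~\eqref{ass:clS1}---then produces $D(S)$-selections whose pairing with $w$ (resp.\ $\tilde w$) approximates $(h^*)^\infty(w)$ (resp.\ $(\tilde h^*)^\infty(\tilde w)$). To merge these two approximations into a single $y\in\D(S)$ with $y_-\in\D_l(\tilde S)$ I would invoke Assumption~\ref{ass:ifcp2} (applied to $(-u,-\tilde u)$) to obtain $\check y,\hat y$ that improve the shifted functionals $y\mapsto EI_h(y)-E\int y\,du$ and $y\mapsto EI_{\tilde h}(y)-E\int y\,d\tilde u$ respectively, and paste them along the predictable atoms of $\tilde\mu$ using the construction~\eqref{eq:pasting} from the proof of Theorem~\ref{thm:ircp}.

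Once the conjugate formula is in hand, properness of $\hat F$ follows from the first sentence of Assumption~\ref{ass:ifcp2}, and lower semicontinuity is immediate from the representation
\[
\hat F(y)=\sup_{(u,\tilde u)\in\hat\M}\left\{\langle y,(u,\tilde u)\rangle-E[J_{h^*}(u)+J_{\tilde h^*}(\tilde u)]\right\},
\]
which displays $\hat F$ as a pointwise supremum of continuous affine functionals on $\D$. For the subdifferential, I would start from the characterization that $(u,\tilde u)\in\partial\hat F(y)$ iff $\hat F(y)+\hat F^*(u,\tilde u)=\langle y,(u,\tilde u)\rangle$; expanding both sides with the integral representations and rearranging produces a sum of four nonnegative integrands---the Fenchel slacks $h(y)+h^*(v)-y\cdot v$ and $(h^*)^\infty(w)-y\cdot w$ together with their predictable counterparts---whose total expectation must vanish. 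Pointwise Fenchel equality then yields the stated inclusions, with $\partial^s h=\partial\delta_S$ arising from the equality case between $\delta_{S_t}$ and its support function $\sigma_{S_t}=(h_t^*)^\infty$ at $|u^s|$-a.e.\ $t$.

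The chief obstacle is the combined approximation step in the lower bound: neither the perturbed interchange from Theorem~\ref{thm:ircp} nor a singular-part measurable selection alone delivers a single process that is simultaneously near-optimal for both the absolutely continuous and singular contributions on both the optional and predictable sides. Assumption~\ref{ass:ifcp2} is tailor-made to bridge these, and the pasting along predictable atoms of $\tilde\mu$ is what carries out the merge rigorously.
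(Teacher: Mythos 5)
Your Fenchel upper bound and your subdifferential argument coincide with the paper's, but two steps of the proposal have genuine gaps. The first is the lower-semicontinuity claim, which is circular as written: the representation $\hat F(y)=\sup_{(u,\tilde u)\in\hat\M}\{\langle y,(u,\tilde u)\rangle-E[J_{h^*}(u)+J_{\tilde h^*}(\tilde u)]\}$ is the biconjugate identity, and deducing it from your formula for $\hat F^*$ already presupposes that $\hat F$ is proper, convex and lsc, which is what you are trying to prove. The paper establishes this representation directly in Lemma~\ref{lem:lsc}, via the interchange rule on decomposable $L^\infty$-spaces together with the optional section theorem to recover the constraint $\delta_{\D(S)}$; lsc and $\hat F>-\infty$ are read off from that lemma, not from the conjugate formula. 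You need such a direct argument (or to invoke that lemma) for the lsc part of the statement.

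The second, more serious, gap is in your reverse inequality. Applying Theorem~\ref{thm:ircp} to $\bar h_t(x)=h_t(x)-x\cdot v_t$ and $\bar{\tilde h}$ only yields near-minimizers for the $\mu$- and $\tilde\mu$-absolutely continuous contributions; the singular terms $-\int y\,du^s$ and $-\int y_-\,d\tilde u^s$ never enter that infimum, and the tools you propose for the merge do not close the gap. Assumption~\ref{ass:ifcp2} and the pasting \eqref{eq:pasting} bridge the optional/predictable compatibility along graphs of predictable times carrying the purely atomic $\tilde u$; neither merges, on the optional side, a near-minimizer of $EI_h(\cdot)-E\int\cdot\,du^a$ with a near-maximizer of $E\int\cdot\,du^s$: the support of $u^s$ is merely a $\mu$-null optional set with no atomic structure, a \cadlag\ modification near it perturbs a whole right-neighbourhood where $h$ of the singular-part selection need not even be finite (that selection only lies in $D(S)$). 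The paper's proof avoids this entirely by absorbing the singular parts into modified integrands: with Lebesgue-decomposition sets $A,\tilde A$ and dominating measures $\hat\mu=\mu+|u^s|$, $\check\mu=\tilde\mu+|\tilde u^s|$, it defines $\hat h$ (equal to $h(\cdot)-\cdot\,du^a/d\mu$ on $A$ and $\delta_S(\cdot)-\cdot\,du^s/d|u^s|$ on $A^c$) and $\check h$ analogously, so that $E[I_h(y)-\int y\,du]+\delta_{\D(S)}(y)=E\int\hat h(y)\,d\hat\mu+\delta_{\D(\hat S)}(y)$, and then applies Theorem~\ref{thm:ircp} once to $(\hat h,\check h)$; by \eqref{eq:domrec} the pointwise infima give exactly $-h^*(du/d\mu)$ on $A$ and $-(h^*)^\infty(du/d|u^s|)$ on $A^c$, i.e.\ $-J_{h^*}(u)$, and the extra linear terms in Assumption~\ref{ass:ifcp2} are precisely what verifies Assumption~\ref{ass:ifcp} for these modified data. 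Your plan should be repaired by this change of integrand and measure rather than by a separate singular-part selection followed by a merge.
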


\begin{proof}
We note first that $F$ is proper and lsc. Indeed, applying Lemma~\ref{lem:lsc} with $\M$ and $\tilde \M$, we see that $\hat F$ is lsc and it never takes the value $-\infty$, while finitess at some point is assumed explicitly in Assumption~\ref{ass:ifcp2}. We have
\begin{align*}
&\hat F^*(u,\tilde u)\\
&=\sup_{y\in\D}\{\langle y,(u,\tilde u) \rangle -\hat F(y)\}\\
&=-\inf_{y\in\D}  \left\lbrace  E \left[ I_h(y)+I_{\tilde h}(y_-)- \int ydu-\int y_- d\tilde u  \right] +\delta_{\D(S)}(y) +\delta_{\D_l(\tilde S)}(y_-)\right\rbrace .
\end{align*}

Let $\tilde u= \tilde u^a + \tilde u^s$ be the Lebesgue decomposition of $\tilde u$ with respect to $\tilde \mu$ and $\tilde A$ be a predictable set such that
\[
E\int 1_{\tilde A^C \cap B}  d\tilde\mu=E\int 1_{\tilde A \cap B} d|\tilde u^s|=0
\]
for any predictable set $B$; see  \cite[Theorem 5.15]{hwy92} and \cite[Theorem 2.1]{Delbaen1995}.
Defining $\check\mu :=|\tilde u^s|+ \tilde \mu$  
\[
\check h(y) := \begin{cases}
\tilde h(y) - y\cdot \frac{d \tilde u^a}{d \tilde  \mu}\quad&\text{ on } \tilde A,\\
\delta_{\tilde S}(y)- y\cdot \frac{d \tilde u^s}{d  |\tilde u^s|} \quad&\text{ on } \tilde A^c,
\end{cases}
\]
and $\check S_t:=\cl\dom \check h_t= \tilde S_t$, we have 
\[
E\left[I_{\tilde h}(y_-)-\int y_-d \tilde u \right]+\delta_{\D_l(\tilde S)}(y_-)=E\int \check h(y_-)d\check{\mu}+\delta_{\D_l(\check S)}(y_-).
\]
Likewise, let $u= u^a +  u^s$ be the Lebesgue decomposition of $u$ with respect to $\mu$ and $A$ an an optional  set such that
\[
E\int 1_{A^C \cap B}  d \mu=E\int 1_{A \cap B} d|u^s|=0
\]
for any optional set $B$.  Defining $\hat\mu :=|u^s|+\mu$
\[
\hat h(y) := \begin{cases}
 h(y) - y\cdot \frac{d  u^a}{d   \mu}\quad&\text{ on }  A,\\
\delta_{ S}(y)- y\cdot \frac{d u^s}{d  | u^s|} \quad&\text{ on }  A^c,
\end{cases}
\]
and $\hat S_t:=\cl\dom \hat h_t=S_t$, we have that
\[
E\left[I_{h}(y)-\int ydu \right]+\delta_{\D(S)}(y)=E\int \hat h(y)d\hat\mu+\delta_{\D(\hat S)}(y).
\]
Recalling \eqref{eq:domrec}, we have
\[
\inf_{y\in\reals^d}\check h_t(y,\omega)=\begin{cases}
 -\tilde h^*_t((d\tilde u/d\tilde\mu)_t(\omega),\omega)\quad&\text{if }(\omega,t)\in \tilde A,\\
 - (\tilde h_t^*)^\infty((d\tilde u/d|\tilde u^s|)_t(\omega),\omega)\quad&\text{otherwise}
\end{cases}
\]
and similarly for $\hat h$. It is straightforward (although slightly tedious) to verify the assumptions in Theorem~\ref{thm:ircp} for $\hat h$ and $\check h$, which then gives the conjugate formula.

To prove the subgradient formula, let $y\in\dom \hat F$ and $(u,\tilde u)\in\hat\M$. By Fenchel's inequality, almost surely,
\begin{align*}
h(y)+h^*(du/d\mu)&\ge y\cdot(du/d\mu)\quad\mu\text{-a.e.,}\\
(h^*)^\infty(du/d|u^s|) &\ge y\cdot (du/d|u^s|)\quad|u^s|\text{-a.e.,}\\
\tilde h(y_-)+\tilde h^*(d\tilde u/d\tilde\mu)&\ge y_-\cdot(d\tilde u/d\tilde \mu)\quad\tilde\mu\text{-a.e.,}\\
(\tilde h^*)^\infty(d\tilde u/d|\tilde u^s|) &\ge y_-\cdot (d\tilde u/d|\tilde u^s|)\quad|\tilde u^s|\text{-a.e.}
\end{align*}
We have $(u,\tilde u)\in\partial \hat F(y)$ if and only if $\hat F(y)+ \hat F^*(u,\tilde u)=\langle y,(u,\tilde u)\rangle$ which  by the conjugate formula, is equivalent to having the above inequalities satisfied as equalities which in turn is equivalent to the stated pointwise subdifferential conditions.
\end{proof}

The following is our second main result. Sufficient conditions for predictability of $\tilde S$ in the theorem will be given in Theorem~\ref{thm:vectilde} below. In addition to the conditions for $\D$ stated at the beginning of this section, we also assume  in the next result that $\D$ is a subspace  of class $(D)$ processes.  We denote the open unit ball in $\reals^d$ by $\uball$.
\begin{theorem}\label{thm:cifcpcor}
Assume that $EI_h$ is finite on $\D(S)$, Assumption~\ref{ass:ifocp} holds and that
\begin{equation*}
\tilde S_t(\omega) :=\cl\{y_{t-} \mid y \in D(S(\omega))\}
\end{equation*}
defines a predictable set-valued mapping. Then 
\[
F^*(u,\tilde u)=EJ_{h^*}(u)+EJ_{\sigma_{\tilde S}}(\tilde u).
\]
Moreover, $F^*$ is the lower semicontinuous hull of $(u,\tilde u)\to EJ_{h^*}(u)+\delta_{\{0\}}(\tilde u)$.
\end{theorem}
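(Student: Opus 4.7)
The plan is to bypass Theorem~\ref{thm:cifcp} entirely and deduce both assertions via a direct Fenchel--Moreau argument. Set
\[
H(u,\tilde u):=EJ_{h^*}(u)+EJ_{\sigma_{\tilde S}}(\tilde u),\qquad G(u,\tilde u):=EJ_{h^*}(u)+\delta_{\{0\}}(\tilde u).
\]
I will show $H^*=G^*=F$ on $\D$, from which Fenchel--Moreau gives $F^*=H^{**}=G^{**}$: the ``moreover'' assertion is that $F^*$ equals the lower semicontinuous hull $G^{**}$ of $G$, while $H^{**}=H$ once $H$ is known to be $\sigma(\hat\M,\D)$-lsc, yielding the explicit formula.

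The crux is computing $H^*$. Because the pairing $\langle y,(u,\tilde u)\rangle=E[\int y\,du+\int y_-\,d\tilde u]$ splits between the two coordinates and $\hat\M\supset\M+\tilde\M$,
\[
H^*(y)=\sup_{u\in\M}\{\langle y,u\rangle-EJ_{h^*}(u)\}+\sup_{\tilde u\in\tilde\M}\{\langle y_-,\tilde u\rangle-EJ_{\sigma_{\tilde S}}(\tilde u)\},
\]
while $G^*(y)$ differs only in that $\delta_{\{0\}}$ collapses the second supremum to~$0$. Lemma~\ref{lem:lsc} identifies the first supremum with $F(y)$. For the second, I apply the predictable-\caglad\ analog of Lemma~\ref{lem:lsc} to the convex normal integrand $\tilde h:=\delta_{\tilde S}$ with the zero measure $\tilde\mu:=0$. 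The required hypotheses are immediate: $\tilde h$ is predictable since $\tilde S$ is, by assumption; Assumption~\ref{ass:clSp} follows from the defining identity for $\tilde S$ combined with $\{y_-\mid y\in D(S(\omega))\}\subset D_l(\tilde S(\omega))$; and the linear lower bound holds trivially with $x=0$, $\alpha=0$ since $\tilde h\ge 0$. Because $\tilde h^*=\sigma_{\tilde S}$ is positively homogeneous, $J_{\tilde h^*}(\tilde u)=J_{\sigma_{\tilde S}}(\tilde u)$ independently of the reference measure, and $I_{\tilde h}(y_-)\equiv 0$ since $\tilde\mu=0$. The analog therefore yields $\sup_{\tilde u\in\tilde\M}\{\langle y_-,\tilde u\rangle-EJ_{\sigma_{\tilde S}}(\tilde u)\}=\delta_{\D_l(\tilde S)}(y_-)$. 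Combining, $H^*(y)=F(y)+\delta_{\D_l(\tilde S)}(y_-)$; and since $y\in\D(S)$ forces $y_-\in\D_l(\tilde S)$ by the defining identity for $\tilde S$, while $F=+\infty$ off $\D(S)$, I conclude $H^*=G^*=F$ on $\D$. Fenchel--Moreau then closes the argument; lower semicontinuity of $H$ is a byproduct of the same computation, as each summand is the conjugate, in one coordinate, of an lsc convex function.

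The main obstacle is the predictable-\caglad\ analog of Lemma~\ref{lem:lsc} itself, specifically the direction giving $\sup=+\infty$ when $y_-\notin\D_l(\tilde S)$. This requires the predictable section theorem to extract a predictable stopping time $\tau$ at which $y_\tau^-\notin\tilde S_\tau$ with positive probability, a measurable strict separation producing a bounded $\F_{\tau^-}$-measurable direction $v$ with $v\cdot y_\tau^->\sigma_{\tilde S_\tau}(v)+1$ on $\{\tau<\infty\}$, and scaling $\tilde u:=\lambda v\delta_\tau\in\hat\M^\infty$ with $\lambda\to\infty$ to push the pairing to $+\infty$ while keeping $EJ_{\sigma_{\tilde S}}(\tilde u)$ linear in $\lambda$.
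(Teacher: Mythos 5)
Your reduction of the ``moreover'' assertion is sound and is essentially the paper's own route: with $G(u,\tilde u)=EJ_{h^*}(u)+\delta_{\{0\}}(\tilde u)$, Lemma~\ref{lem:lsc} gives $G^*=F$ on $\D$ (the supremum over $(u,0)\in\hat\M$ is exactly the supremum over $\M$), and biconjugation then identifies $F^*$ with the lower semicontinuous hull of $G$. Likewise, the inequality $H^*\le F$ and the identification $H^*=F$ (after patching the splitting of the supremum over $\hat\M$, which need not be the product $\M\times\tilde\M$, by the pointwise Fenchel inequalities) are fine, and the predictable analogue of Lemma~\ref{lem:lsc} with $\tilde h=\delta_{\tilde S}$ is legitimately available.

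The gap is in the last step for the explicit formula. From $H^*=F$ you only get $F^*=H^{**}\le H$; to conclude $F^*=H$ you must show $H$ is proper convex and lower semicontinuous for $\sigma(\hat\M,\D)$, i.e.\ $H=H^{**}$. Your justification --- ``lower semicontinuity of $H$ is a byproduct of the same computation, as each summand is the conjugate, in one coordinate, of an lsc convex function'' --- is circular and gets Lemma~\ref{lem:lsc} backwards: that lemma exhibits $F$ as a conjugate of $EJ_{h^*}$ (hence $F$ is lsc), not $EJ_{h^*}$ as a conjugate of anything on $\D$. That $(u,\tilde u)\mapsto EJ_{h^*}(u)+EJ_{\sigma_{\tilde S}}(\tilde u)$ is weakly lsc against the space of \emph{adapted} \cadlag\ processes is precisely the substantive content of the theorem; even deterministically such lsc statements are theorems requiring conditions on the domain mapping, and here the adaptedness constraint is the whole difficulty. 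The paper closes exactly this inequality ($F^*\ge H$) by applying Theorem~\ref{thm:cifcp} with $\tilde h=\delta_{\tilde S}$, which in turn rests on the interchange rule of Theorem~\ref{thm:ircp}; the bulk of the paper's proof is the verification of part 2 of Assumption~\ref{ass:ifcp2}: given $\tilde y\in\D_l(\tilde S)$ and $(u,\tilde u)$, one constructs near-optimal adapted elements of $\D(S)$ whose left limits approximate $\tilde y$ at the predictable atoms of $\tilde u$, using the graph measurability of $D(S)$ (Theorem~\ref{thm:D(S)}), a measurable selection argument, optional projections, and announcing sequences for the predictable jump times. None of that work is replaced by anything in your proposal, so the explicit conjugate formula remains unproved; only the ``moreover'' statement survives.
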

\begin{proof}
Let $\tilde h=\delta_{\tilde S}$ so that $F=\hat F$. Assumption~\ref{ass:ifpcp} holds, so by Theorem~\ref{thm:cifcp}, it suffices to show that  Assumption~\ref{ass:ifcp2} is satisfied. Indeed, the last claim then holds as well, by Lemma~\ref{lem:lsc} and the biconjugate theorem. 
	
If $y\in \D(S)$, then $y_-\in\D_l(\tilde S)$, and property 1 in Assumption~\ref{ass:ifcp2} holds. To show property 2, take  $\tilde y \in\D_l(\tilde S)$, $\epsilon>0$ and $(u,\tilde u)\in\hat\M$. Choose $y\in \D(S)$.
	
Since $\tilde u$ is purely atomic and predictable, the process $w$ defined by
\[ 
w_t:=\int_{[0,t]} (|\tilde y|+|y_-|)d  |\tilde u|
\]
is predictable increasing and purely discontinuous whose jump times belong to the set $A :=\{\Delta w_t\ne 0\}$. Defining $A_\nu:=\{\Delta w_t \ge 1/\nu\}$ and fixing $\nu$ large enough, we get
\begin{equation*}
E\left[\int_{A_\nu^C} (|\tilde y|+|y_-|)d |\tilde u| \right]<\epsilon/2.
\end{equation*}
Here $A_\nu$ is supported on a union of graphs of an increasing disjoint sequence $(\tau^{j})$ of predictable times.  Take $\alpha>0$ such that $E[| \tilde u |(A)] <\alpha$. Let 
\[
\begin{split}
\hat S_t(\omega)&=\{x\in\reals^d\mid |x|\le |y_t(\omega)|+|\tilde y_{t+}(\omega)|+1\}\\
\Gamma^j(\omega)&=\{ z\in D\mid z_{\tau^j(\omega)-} \in \tilde y_{\tau^j(\omega)}(\omega) + \frac{\epsilon}{\alpha 2^{j+1}} \uball \}.
\end{split}
\]
We have that $D(S)$ and $D(\hat S)$ are graph-measurable by Theorem \ref{thm:D(S)}.  Let $T_j:\Omega  \times \B(D) \to \reals$ be defined by $T_j(\omega,z)= z_{\tau^{j}(\omega)-} - \tilde y_{\tau^{j}(\omega)}(\omega)$. Then $T_j$ is $\F \otimes \B(D)$-measurable by Lemma \ref{lem:jointmeas}. Hence, $\Gamma^j$ is graph-measurable since  $\graph {\Gamma^j}=T^{-1}_j( \frac{\epsilon}{\alpha 2^{j+1}} \uball)$. As a consequence, we see the graph-measurability of  the mapping
\[
\Gamma(\omega):=D(S) \cap D(\hat S) \cap \bigcap_{j} \Gamma^j.
\]

Now we check $\Gamma$ is nonempty-valued. We fix $\omega$. For all $j$ there exists $z^j \in D(S(\omega))$ with $z^j_{\tau^j(\omega)-}\in \tilde y_{\tau^j(\omega)}(\omega) + \frac{\epsilon}{\alpha 2^{j+1}} \uball$ by the definition of $\tilde S$.  For  $\delta>0$ let
\[
z:=\sum_{j} z^j 1_{[\tau^j(\omega)-\delta,\tau^j(\omega))} + y(\omega) (1- 1_{\bigcup_{j} [\tau^j(\omega)-\delta,\tau^j(\omega))}).
\]
The series defining $z$ is a finite sum since $A_{\nu}$ is $\omega$-wise finite. Thus, $z$ is a $\cadlag$ function and it is clear that it is a selection of $S(\omega)$.  We choose $\delta$ in  such a way that $z^j_{t} \in z^{j}_{\tau^j(\omega)-} + \frac{\epsilon}{2^{j+1}} \uball$ and  $\tilde y_{t}(\omega) \in \tilde y_{\tau^j(\omega)}(\omega) + \frac{\epsilon}{2^{j+1}} \uball$ for $t \in [\tau^j(\omega)-\delta,\tau^j(\omega))$. Then $z \in D(\hat S(\omega))$. It is also clear that $z \in \Gamma^j(\omega)$. Thus,  $\Gamma(\omega)$ is nonempty.\\


By \cite[Theorem III.22]{cv77}, there exists a $\cadlag$ selection $z$ of $\Gamma$ which seen as a process is measurable although possibly non-adapted. The bound
\[
|z|\le |y(\omega)|+|\tilde y_{+}(\omega)|+1
\]
implies that $\op z$ exists and belongs to $\D$. The process $z$ satisfies $(\op z)_-=\pp(z_-)$, by \cite[Lemma~4]{pp18b}. Moreover, by \cite[Corollary~4]{kp17}, $\op z\in\D(S)$. Let $(\sigma^{j,\nu})$ be an  announcing sequence for $\tau^j$, where we may assume that $\sigma^{j+1,\nu}\ge \tau^{j}$ for every $j$ and $\nu$. Defining
\[
\hat y^\nu = \sum_j \op z\one_{[\sigma^{j,\nu},\tau^j)} + y\one_{(\bigcup_j[\sigma^{j,\nu},\tau^j))^C},
\]
we have that $\hat y^\nu\in \D(S)$ and hence $\hat y^\nu\in\dom EI_h$. For $\nu$ large enough,
\[
E\int \hat y^\nu_{-} d\tilde u \le E\int \tilde y d\tilde u +\epsilon
\]
which shows property 2 in Assumption~\ref{ass:ifcp2}.
\end{proof}

The following result is an immediate corollary of Theorem~\ref{thm:cifcpcor}.
\begin{corollary}\label{cor:S}
Assume that $S$ is an optional set-valued mapping with
\[
S_t(\omega)=\cl\{y_t\mid y\in D(S)(\omega)\}
\]
and that $\tilde S$ defined by 
\[
\tilde S_t(\omega):=\cl\{y_{t-}\mid y\in D(S)(\omega)\}
\]
is predictable. Then $\D(S)$ is closed and
\[
\sigma_{\D(S)}(u,\tilde u)=EJ_{\sigma_S}(u)+EJ_{\sigma_{\tilde S}}(\tilde u)
\]
as soon as $\D(S)\ne\emptyset$. Moreover, $\sigma_{\D(S)}$ is the lower semicontinuous hull of 
\[
(u,\tilde u)\to EJ_{\sigma_S}(u)+\delta_{\{0\}}(\tilde u).
\]
\end{corollary}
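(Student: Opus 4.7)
The plan is to apply Theorem~\ref{thm:cifcpcor} to the degenerate convex normal integrand $h_t(x,\omega):=\delta_{S_t(\omega)}(x)$, together with any fixed optional random measure $\mu\in L^1_+(M)$. Since $S$ is optional and closed convex-valued, $h$ is a convex optional normal integrand with $h^*_t(\cdot,\omega)=\sigma_{S_t(\omega)}$. With this choice $I_h\equiv 0$ on $D(S)$ and $+\infty$ otherwise, so the functional $F$ of \eqref{eq:F} becomes $F=\delta_{\D(S)}$ and hence $F^*=\sigma_{\D(S)}$.

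Next I would verify the hypotheses of Theorem~\ref{thm:cifcpcor}. First, $EI_h\equiv 0$ on $\D(S)$, so it is finite there. Property~2 of Assumption~\ref{ass:ifocp} is satisfied with $x\equiv 0$ and $\alpha\equiv 0$. For Property~1, the pathwise Assumption~\ref{ass:clS} holds because the image closure of $\dom h_t(\cdot,\omega)$ equals $S_t(\omega)$ itself, so the first identity in \eqref{ass:clS1} coincides with the first hypothesis of the corollary, while the second identity degenerates to $D(S(\omega))=D(S(\omega))$ since $\dom I_h(\omega,\cdot)\cap D(S(\omega))=D(S(\omega))$. Predictability of $\tilde S$ is directly assumed. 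Theorem~\ref{thm:cifcpcor} then yields
\[
\sigma_{\D(S)}(u,\tilde u)=F^*(u,\tilde u)=EJ_{h^*}(u)+EJ_{\sigma_{\tilde S}}(\tilde u)=EJ_{\sigma_S}(u)+EJ_{\sigma_{\tilde S}}(\tilde u),
\]
together with the lower semicontinuous hull representation for $\sigma_{\D(S)}$. Closedness of $\D(S)$ follows because Theorem~\ref{thm:cifcp}, invoked inside the proof of Theorem~\ref{thm:cifcpcor}, shows that $F=\delta_{\D(S)}$ is proper lower semicontinuous, whence $\D(S)=\{F\leq 0\}$ is closed.

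The only mild point worth tracking, rather than a genuine obstacle, is the apparent dependence of $J_{\sigma_S}$ on the auxiliary measure $\mu$. By positive homogeneity of $\sigma_S$ we have $(\sigma_S)^\infty=\sigma_S$, so the two summands in the definition of $J_{\sigma_S}(u)$ coalesce into $\int\sigma_S(du/d\lambda)d\lambda$ for any $\lambda$ dominating $|u|$; the resulting quantity is therefore intrinsic to $u$, consistent with the absence of $\mu$ in the statement of the corollary.
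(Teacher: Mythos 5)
Your proposal is correct and follows exactly the paper's route: the paper obtains Corollary~\ref{cor:S} as an immediate consequence of Theorem~\ref{thm:cifcpcor} applied to $h=\delta_S$ (so $h^*=\sigma_S$, $F=\delta_{\D(S)}$), which is precisely your argument, including the observation that positive homogeneity of $\sigma_S$ makes $J_{\sigma_S}$ independent of the auxiliary measure $\mu$. Your verification of Assumptions~\ref{ass:ifocp} and \ref{ass:clS} and the closedness of $\D(S)$ via lower semicontinuity of $F$ fills in the details the paper leaves implicit.
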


\section{C\'adl\'ag selections of set-valued mappings}\label{sec:SVA}
One of the conditions in our main results above is  
\begin{equation*}
S_t=\cl \{y_t \mid y \in D(S)\}
\end{equation*}
which is a sort of Michael representation (see \cite{rw98}) of $S$ consisting of \cadlag\ selections. In this section we analyze this condition in terms of standard continuity properties of $S_t$ as a function of $t$. These results are of independent interest in set-valued analysis. Our results also lead to sufficient conditions for the predictability of $\tilde S$ in Theorem~\ref{thm:cifcpcor}.

Recall that a function is right-continuous ($\cad$) in the usual sense if and only if it is continuous with respect to the topology $\tau_r$ generated by the right-open intervals $\{[s, t) \mid s<t\}$. A set-valued mapping $\Gamma:[0,T]\tos\reals^d$ is said to be {\em right-inner semicontinuous} (right-isc) if  $\Gamma^{-1}(O)$ is $\tau_r$-open for any open $O\subseteq\reals^d$. Left-inner semicontinuity (left-isc) is defined analogously using the topology $\tau_l$  generated by the left-open intervals $\{(s, t] \mid s<t\}$.

The mapping $\Gamma$ is said to be {\em right-outer semicontinuous} (right-osc) if  its graph is closed in the product topology of $\tau_r$ and the  usual topology on $\reals^d$. The mapping $\Gamma$ is {\em right-continuous} ($\cad$) if it is both right-isc and right-osc. Left-outer semicontinuous (left-osc) and left-continuous ($\cag$) mappings are defined analogously.
We say that $\Gamma$ {\em has limits from the left} ($\lag$) if, for all $t$,
\[
\liminf_{s\upuparrows t} \Gamma_s=\limsup_{s\upuparrows t} \Gamma_s,
\]
where the limits are in the sense of \cite[Section 5.B]{rw98} and are taken along strictly increasing sequences. {\em Having limits from the right} ($\lad$) is defined analogously. A mapping $\Gamma$ is {\em \cadlag\ } (resp. {\em \caglad\ }) if it is both $\cad$ and $\lag$ (both $\cag$ and $\lad$). Recall that a convex-valued $\Gamma$ is {\em solid} if $\interior \Gamma_t\ne\emptyset$ for all $t$.  For any mapping $\Gamma$ we let $\vec \Gamma_{0}:= \{0\}$ and for $t>0$
\begin{equation*}
\vec\Gamma_t:= \liminf_{s \upuparrows t} \Gamma_s.
\end{equation*}

In the following theorem, the distance of $x$ to $\Gamma_t$ is defined, as usual, by  
\[
d(x, \Gamma_t) =\inf_{ x' \in \Gamma_t} d(x,x'),
\]
where the distance of two points is given by the  euclidean metric.

\begin{theorem}\label{thm:cadlagGamma}
Let $\Gamma:[0,T]  \rightrightarrows \reals^d$ be  a \cadlag\ nonempty convex-valued mapping. For every $x\in\reals^d$, the function $y$ defined by
\[
y_t=\argmin_{x'\in\Gamma_t} d(x,x')
\]
satisfies $y\in D(\Gamma)$ and
\begin{equation*}
y_{t-}=\argmin_{x'\in\vec\Gamma_t} d(x,x').
\end{equation*}
In particular,
\[
\Gamma_t=\cl \{y_t \mid y \in D(\Gamma)\}
\]
and $\vec \Gamma$ is \caglad\ nonempty convex-valued with
\[
\vec \Gamma_t=\cl \{y_{t-}\mid y\in D(\Gamma)\}.
\]
\end{theorem}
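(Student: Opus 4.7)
The central tool is the continuity of the metric projection onto closed convex sets under Painlev\'e--Kuratowski convergence: if $C_n\subset\reals^d$ are closed convex sets converging in the Painlev\'e--Kuratowski sense to a nonempty closed convex $C$, then $\pi_{C_n}(x)\to\pi_C(x)$ for every $x$, which is a standard consequence of Mosco convergence theory in finite dimensions (see, e.g., \cite{rw98}). Since each $\Gamma_t$ is nonempty closed convex in $\reals^d$, $y_t:=\pi_{\Gamma_t}(x)$ is the single-valued argmin, so $y$ is well-defined pointwise.

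To establish right-continuity of $y$, I would unpack the $\cad$ property of $\Gamma$ as $\Gamma_{s_n}\to\Gamma_t$ in the Painlev\'e--Kuratowski sense for every $s_n\downdownarrows t$ and apply projection-continuity to obtain $y_{s_n}\to y_t$. For left limits, the $\lag$ hypothesis gives $\Gamma_{s_n}\to\vec\Gamma_t$ in the same sense for every $s_n\upuparrows t$; I would then check that $\vec\Gamma_t$ is closed (automatic for a Painlev\'e--Kuratowski limit), convex (a $\liminf$ of convex sets is convex), and nonempty (otherwise $d(x,\Gamma_{s_n})\to\infty$, which is incompatible with $\Gamma$ admitting nonempty left limits in the sense built into the standing hypothesis). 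Applying projection-continuity once more yields $y_{s_n}\to\pi_{\vec\Gamma_t}(x)$, whence $y\in D(\Gamma)$ and $y_{t-}=\pi_{\vec\Gamma_t}(x)$.

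The two Michael-type representations then follow immediately: for any $x_0\in\Gamma_t$ one has $\pi_{\Gamma_t}(x_0)=x_0$, so the selection $y=\pi_{\Gamma_\cdot}(x_0)$ just constructed is a \cadlag\ element of $D(\Gamma)$ with $y_t=x_0$, giving the first equality (in fact without the closure); the same trick with $x_0\in\vec\Gamma_t$ produces $y\in D(\Gamma)$ with $y_{t-}=x_0$. For the last assertion, convexity and nonemptiness of $\vec\Gamma_t$ are already in hand; the $\cag$ property of $\vec\Gamma$ is a standard diagonal argument from its definition as a left Painlev\'e--Kuratowski limit, and the $\lad$ property follows by combining $\cad$ of $\Gamma$ with a diagonalization, the right limit of $\vec\Gamma$ at $t$ in fact coinciding with $\Gamma_t$. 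The main obstacle I anticipate is the clean handling of nonemptiness of $\vec\Gamma_t$ and the precise invocation of projection-continuity at this level of generality; the remaining steps are essentially bookkeeping.
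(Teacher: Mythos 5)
Your proposal is correct and takes essentially the same route as the paper: both rest on continuity of the metric projection under Painlev\'e--Kuratowski set convergence (the paper invokes \cite[Proposition~4.9]{rw98}) to conclude that $y$ is \cadlag\ with $y_{t-}$ the projection of $x$ onto $\vec\Gamma_t$, after which the Michael-type representations follow by choosing the base point in $\Gamma_t$ (resp.\ $\vec\Gamma_t$) and the \caglad\ property of $\vec\Gamma$ follows by the standard set-convergence argument (the paper cites \cite[Exercise~4.2]{rw98} where you sketch the diagonalization). The nonemptiness of $\vec\Gamma_t$ that you flag is handled at the same implicit level in the paper's proof, so this is not a point of divergence.
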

\begin{proof}
By strict convexity of the distance mapping, the argmin in the definition of $y$ is single-valued. By \cite[Proposition 4.9]{rw98}, $y$ is $\cad$. On the other hand, for every strictly increasing $t^\nu\nearrow t$, $\Gamma_{t^\nu}\to \vec \Gamma_t$, so $y$ is $\lad$, by \cite[Proposition 4.9]{rw98} again. 

Next we show
\begin{equation*}
y_{t-}=\argmin_{x'\in\vec\Gamma_t} d(x,x').
\end{equation*}
 Since $\Gamma$ is \lag, we get $y_-\in D_l(\vec\Gamma)$, so the inequality $d(x, \vec \Gamma_{\bar t})\le d(x, y_{t-}) $ is trivial. For the other direction, assume for a contradiction that  $d(x, \vec \Gamma_{\bar t})<d(x, y_{\bar t-}) $ for some $\bar t \in (0,T]$. There is $s<\bar t$ such that  $d(x, \vec \Gamma_{\bar t})<d(x, y_{s'}) $ for all $s'\in(s,\bar t)$. By the definition of $\vec\Gamma$, this means that 
\[
y_{s'}\notin\argmin_{x'\in\Gamma_{s'}} d(x,x')
\]
for some $s'\in(s,\bar t)$, which is a contradiction. 

The claims $\Gamma_t=\cl \{y_t \mid y \in D(\Gamma)\}$ and $\vec \Gamma_t=\cl \{y_{t-}\mid y\in D(\Gamma)\}$ are now immediate while $\vec\Gamma$ is \caglad\ due to \cite[Exercise 4.2]{rw98}.
\end{proof}

\begin{theorem}\label{thm:solidGamma}
Let $\Gamma:[0,T]  \rightrightarrows \reals^d$ be a closed convex-valued solid mapping such that $\vec \Gamma$ is solid. We have
\begin{equation*}
\Gamma_t =\cl \{y_t \mid y \in D(\Gamma)\}
\end{equation*}
if and only if $\Gamma$ is  right-isc. In this case, $\vec \Gamma$ is left-isc and
\[
\vec \Gamma_t =\cl \{ y_{t-} \mid y \in D(\Gamma)\}.
\]
\end{theorem}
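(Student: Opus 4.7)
The forward implication is immediate: if $O$ is open with $O \cap \Gamma_{\bar t} \ne \emptyset$, choose $y \in D(\Gamma)$ with $y_{\bar t} \in O$ by the density hypothesis, and invoke right-continuity of $y$ at $\bar t$ to find a right-neighborhood $[\bar t, \bar t + \delta)$ on which $y_s \in O$, hence on which $\Gamma_s \cap O \ne \emptyset$.

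For the reverse implication, the substance of the argument is a \emph{persistence lemma}: under closedness, convexity, solidity and right-isc, for every $\bar t$ and every $\bar x \in \interior \Gamma_{\bar t}$ there exists $\delta > 0$ with $\bar x \in \Gamma_s$ for all $s \in [\bar t, \bar t + \delta)$. I would prove this by picking finitely many ``vertices'' $x_0,\dots,x_N \in \interior \Gamma_{\bar t}$ whose convex hull contains a ball $\bar x + 2r\uball$, invoking right-isc at each $x_i$ to obtain points $z_i^s \in \Gamma_s \cap (x_i + r\uball)$ on a common right-neighborhood of $\bar t$, and noting that $\co\{z_0^s,\dots,z_N^s\} \subseteq \Gamma_s$, while Hausdorff-continuity of $\co$ ensures $\bar x \in \co\{z_i^s\}$. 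A symmetric ``left persistence'' lemma uses the solidity of $\vec\Gamma$ in place of $\Gamma$: each $\bar y \in \interior \vec\Gamma_{\bar t}$ lies in $\Gamma_s$ on some interval $(\bar t-\delta, \bar t)$, because the definition of $\vec\Gamma$ already supplies the approximating points needed to run the same convex-hull averaging.

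With these two lemmas in hand, a base \cadlag\ selection $\tilde y \in D(\Gamma)$ is built by a pasting/compactness argument: cover $[0,T]$ by finitely many right-intervals $[s_i, s_i + \delta_i)$ supplied by the persistence lemma, pick $x_i \in \interior \Gamma_{s_i}$ for each piece, and set $\tilde y \equiv x_i$ on $[s_i, s_{i+1})$. The resulting step function is automatically \cadlag\ and belongs to $D(\Gamma)$. Given any target $\bar x \in \interior \Gamma_{\bar t}$, overwriting $\tilde y$ by the constant value $\bar x$ on the interval $[\bar t, \bar t + \delta)$ (with $\delta$ from the persistence lemma at $\bar x$) yields a \cadlag\ selection $y$ with $y_{\bar t} = \bar x$. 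Boundary points of $\Gamma_{\bar t}$ are handled by convex combinations with an interior reference point, so $\Gamma_{\bar t} = \cl\{y_{\bar t} \mid y \in D(\Gamma)\}$. The left-isc of $\vec\Gamma$ and the identity $\vec\Gamma_{\bar t} = \cl\{y_{\bar t-} \mid y \in D(\Gamma)\}$ are obtained by the mirror-image construction: paste the constant $\bar y \in \interior \vec\Gamma_{\bar t}$ on $(\bar t - \delta, \bar t)$ via the left persistence lemma, producing $y_{\bar t-} = \bar y$.

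I expect the main obstacle to be the persistence lemma itself: right-isc alone delivers only approximate inclusions $\Gamma_s \cap O \ne \emptyset$, and upgrading these to an \emph{exact} inclusion of a fixed interior point throughout a full right-neighborhood is precisely where solidity is essential, through the convex-hull averaging step. A secondary issue is the extraction of a finite partition of $[0,T]$ from the cover by right-intervals, which can be handled by ordinary compactness (enlarging each $[s,s+\delta_s)$ to a usual-open neighborhood) combined with a supremum/continuation argument.
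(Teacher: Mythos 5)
Your core argument is exactly the paper's: the right persistence lemma is the paper's step \eqref{eq:kp} (finitely many points $\bar y^i\in\interior\Gamma_{\bar t}$ with $y^r$ interior to $\co\{\bar y^i\}$, right-isc to hit each ball $\bar y^i+\epsilon\uball$ on a common right-interval, convexity to conclude), the left persistence lemma is the paper's \eqref{eq:kpleft} (proved there by contradiction from the liminf definition of $\vec\Gamma$, using solidity of $\vec\Gamma$), and the forward implication, the passage from interior points to all of $\Gamma_t$, and the mirror construction for $\vec\Gamma_t=\cl\{y_{t-}\mid y\in D(\Gamma)\}$ are the same.

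The one place where you diverge, and where your sketch as written would not go through, is the construction of the base selection $\tilde y\in D(\Gamma)$. The intervals $[s,s+\delta_s)$ are basic open sets of the lower-limit topology, in which $[0,T]$ is \emph{not} compact, so there is no finite subcover; and enlarging $[s,s+\delta_s)$ to a two-sided neighborhood does not repair this, because the constant $x_s\in\interior\Gamma_s$ is only known to lie in $\Gamma_u$ for $u\ge s$, so the enlarged interval is not one on which that constant is a selection. Your fallback, the supremum/continuation argument, does work, but only if at the critical time $t^*$ (the supremum of times $t$ such that a \cadlag\ selection exists on $[0,t]$) you explicitly invoke the \emph{left} persistence lemma to bridge from some $t_n<t^*$ up to $t^*$ with a constant value $y^l\in\interior\vec\Gamma_{t^*}$; right persistence alone cannot yield $D(\Gamma)\ne\emptyset$ (for instance $\Gamma_t=[1/(1-t),\infty)$ for $t<1$ and $\Gamma_t=[0,\infty)$ for $t\ge1$ is closed convex-valued, solid and right-isc, yet admits no \cadlag\ selection on $[0,1]$ --- precisely because $\vec\Gamma_1=\emptyset$, i.e.\ the solidity hypothesis on $\vec\Gamma$ fails there). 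So the left lemma must enter the pasting step itself, not only the representation of $\vec\Gamma$. The paper sidesteps the covering issue by building two-sided local step selections $y^l 1_{[(t-\delta)^+,t)}+y^r 1_{[t,t+\delta]}$ from both persistence lemmas and pasting them with a partition of unity as in Michael's theorem, convexity of the values making the pasted function again a selection; with the above amendment your continuation argument is an acceptable, slightly more elementary substitute.
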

\begin{proof}
Necessity is obvious. To prove sufficiency we first show that $D(\Gamma)\ne\emptyset$.

For $\bar t \in [0,T)$ and $y^r  \in \interior \Gamma_{\bar t}$, there exists $\delta>0$ such that $y^r \in \Gamma_u$ for $u \in [\bar t,\bar t+\delta]$ since $\Gamma$ is right-isc and solid. Indeed, let $\bar y^i$ be a finite set of points in $\interior \Gamma_{\bar t}$ such that $y^r$ belongs to the interior of the convex hull $\co\{\bar y^i\}$. Let $\epsilon>0$ be small enough so that $y^r \in \co\{ v^i\}$ whenever, for every $i$, $v^i\in (\bar y^i+\epsilon \uball)$. Since $\Gamma$ is right-isc, there is, for every $i$, $u^i>\bar t$ such that $\Gamma_u\cap(\bar y^i+\epsilon \uball)\ne\emptyset$ for every $u\in[\bar t,u^i)$. Denoting $\bar u=\min u^i$, we have, by convexity of $\Gamma$, that 
\begin{equation}\label{eq:kp}
y^r\in \Gamma_u \mbox{ for every } u\in[\bar t,\bar u).
\end{equation}
Now assume $\bar t>0$ and  take $y^l  \in \interior \vec \Gamma_{\bar t}$.  We now show the existence of $s<\bar t$ such that
\begin{equation}\label{eq:kpleft}
y^l \in \Gamma_u \mbox{ for every } u\in(s,\bar t].
\end{equation}
Assume for a contradiction the existence of $t^\nu \nearrow \bar t$ such that $y^l \notin \Gamma_{t^\nu}$.  Let $\bar y^i \in \interior \vec \Gamma_{\bar t}$ be $d+1$ points and $\epsilon>0$  such that for any points $\tilde y^i \in \bar y^i  + \epsilon \uball$, $\tilde y^i \in  \vec \Gamma_{\bar t}$ and $y^l \in \co\{\tilde y^i \}$.  By the definition of $\vec \Gamma$ as a left-limit, there exists $\nu_0 \in \naturals$ such that for all $\nu> \nu_0$ there exists $y^i_{\nu} \in \Gamma_{t^\nu}$ with $y^i_{\nu} \in \bar y^i  + \epsilon \uball$ for $i=1,\ldots,d+1$. Then,  $y^{l} \in \co\{y^i_{\nu}\}$ and this last set is included in $\Gamma_{t^\nu}$ by convexity. Then, $y^{l} \in \Gamma_{t^\nu}$, a contradiction. 

We have proved that for any $t \in [0,T]$ there exists $\delta >0$ and  $y^l,y^r \in \reals^d$ such that $y_u = y^l 1_{[ (t-\delta)^+,  t)}(u) + y^r 1_{[t,t+\delta]}(u)$ is  a $\cadlag$ selection of $\Gamma$ in the interval $[t-\delta,t+\delta] \cap [0,T]$.  Now we can paste together these local selections using partitions of unity as in \cite[Theorem 3.2'']{mic56}. We have proved that  $D(\Gamma) \neq \emptyset$. Now $\Gamma_t =\cl \{y_t \mid y \in D(\Gamma)\}$ is an easy consequence to \eqref{eq:kp} and $D(\Gamma) \neq \emptyset$.  We have shown the sufficiency.

To prove $\vec \Gamma_t =\cl \{ y_{t-} \mid y \in D(\Gamma)\}$, the inclusion $\supseteq$ is clear. Now take $y^l \in \interior \vec \Gamma_{\bar t}$ and $s<\bar t$ as in  \eqref{eq:kpleft} and $y \in D(\Gamma)$. Defining
\[
\begin{cases}
y^l \quad & t \in [s, \bar t)\\
y_t \quad & otherwise,
\end{cases}
\]
we get the inclusion $\subseteq$. From the representation $\vec \Gamma_t =\cl \{ y_{t-} \mid y \in D(\Gamma)\}$ it follows that $\vec \Gamma$ is left-isc.
\end{proof}

The following preparatory lemma characterizes preimages of $\vec\Gamma$. For a set $A$, $\vec\cl A$ denotes the limit points from the left of $A$. Thus, for $t \in \vec \cl  A$ and $s<t$,  $(s,t) \cap A \neq \emptyset$.
\begin{lemma}\label{lem:vecCl}
For any closed set $A\subset\reals^d$, 
\[
\vec\Gamma^{-1}(A)= \bigcup_{N \in \naturals} \bigcap_{\nu \in \naturals} \bigcap_{n \in \naturals}  \bigcup_{a \in \rationals^d \cap N \uball} \left[ \vec\cl C_{\nu,n,a}   / \vec\cl C_{\nu,n,a}^c\right],
\]
where $C_{\nu,n,a}:=\Gamma^{-1}(A\cap (a+n^{-1}\uball)+\nu^{-1}\uball)$.
\end{lemma}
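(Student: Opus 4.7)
The plan is to first simplify each atomic set $\vec\cl C_{\nu,n,a} \setminus \vec\cl C_{\nu,n,a}^c$ appearing on the right-hand side. Directly from the definition given before the lemma, $t \in \vec\cl C_{\nu,n,a}^c$ means that every left punctured neighborhood of $t$ meets the complement of $C_{\nu,n,a}$, so its negation is the existence of some $s<t$ with $(s,t)\subset C_{\nu,n,a}$; and this in turn forces $t\in\vec\cl C_{\nu,n,a}$. Thus $t\in\vec\cl C_{\nu,n,a}\setminus\vec\cl C_{\nu,n,a}^c$ is equivalent to: there exists $s<t$ such that for every $s'\in(s,t)$, the set $\Gamma_{s'}$ meets $A\cap(a+n^{-1}\uball)+\nu^{-1}\uball$. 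This turns the right-hand side into the condition that some $N\in\naturals$ exists such that, for every $\nu,n\in\naturals$, one can find a rational $a\in N\uball$ and $s<t$ as above.

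Next, I would reformulate the left-hand side using the standard characterization of the set-valued liminf: $x\in\vec\Gamma_t$ iff for every $\epsilon>0$ there exists $\delta>0$ with $\Gamma_{s'}\cap(x+\epsilon\uball)\ne\emptyset$ for all $s'\in(t-\delta,t)$.

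For the forward inclusion, I take $x\in\vec\Gamma_t\cap A$, pick $N$ with $|x|<N$, and for each pair $\nu,n$ select a rational $a\in N\uball$ with $|a-x|<\min(1/n,1/\nu)$ so that $x\in A\cap(a+n^{-1}\uball)$. Choosing $\delta>0$ adapted to the tolerance $\nu^{-1}$, every $s'\in(t-\delta,t)$ satisfies $\Gamma_{s'}\cap(x+\nu^{-1}\uball)\ne\emptyset$, and since $x+\nu^{-1}\uball\subset A\cap(a+n^{-1}\uball)+\nu^{-1}\uball$, this $a$ and $s=t-\delta$ witness the inner condition.

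For the reverse inclusion, I would run a diagonal argument. Given $t$ on the right-hand side with witness $N$, for each $k$ apply the inner condition with $\nu=n=k$ to obtain $a_k\in\rationals^d\cap N\uball$ and $s_k<t$ such that $\Gamma_{s'}\cap(A\cap(a_k+k^{-1}\uball)+k^{-1}\uball)\ne\emptyset$ for all $s'\in(s_k,t)$. The sequence $(a_k)$ is bounded, hence has a subsequence $a_{k_j}\to a^*$; picking for each $j$ a witness $y_{k_j}\in A\cap(a_{k_j}+k_j^{-1}\uball)$ gives $y_{k_j}\to a^*$, and closedness of $A$ yields $a^*\in A$. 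To see $a^*\in\vec\Gamma_t$, fix $\epsilon>0$, choose $k$ with $|a_k-a^*|<\epsilon/2$ and $2/k<\epsilon/2$, and observe that any $s'\in(s_k,t)$ yields an element of $\Gamma_{s'}$ within distance $2/k$ of $a_k$, hence within $\epsilon$ of $a^*$; so $\Gamma_{s'}\cap(a^*+\epsilon\uball)\ne\emptyset$ for $s'\in(s_k,t)$, which is the liminf characterization. The main delicate point is bookkeeping the quantifiers during this diagonal extraction, in particular aligning the tolerance balls so that both the closedness of $A$ and the liminf characterization of $\vec\Gamma_t$ can be applied to the same accumulation point $a^*$.
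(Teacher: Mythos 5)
Your proposal is correct and takes essentially the same route as the paper's proof: the forward inclusion by choosing a rational $a$ near a point of $A\cap\vec\Gamma_t$ inside $N\uball$ and using the $\epsilon$--$\delta$ characterization of the left liminf, and the reverse inclusion by the diagonal choice $\nu=n=k$, compactness of $N\uball$ to extract $a_{k_j}\to a^*$, closedness of $A$ to get $a^*\in A$, and the same liminf characterization to conclude $a^*\in\vec\Gamma_t$. The only imprecision (the simplification ``$\exists s<t$ with $(s,t)\subset C$'' forcing $t\in\vec\cl C$ tacitly assumes $t>0$, relevant because of the convention $\vec\Gamma_0=\{0\}$) is shared with the paper's own proof, which also argues only for $t>0$.
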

\begin{proof}
To prove the inclusion $\supseteq$, let 
\[
t \in \bigcap_{\nu \in \naturals} \bigcap_{n \in \naturals}  \bigcup_{a \in \rationals^d \cap N \uball} \left[ \vec\cl C_{\nu,n,a}   / \vec\cl C_{\nu,n,a}^c\right]
\]
 for some fixed $N$. For all $\nu,n$ there exists $a^{\nu,n} \in  \rationals^d \cap N \uball$ such that $t \in \vec\cl C_{\nu,n,a^{\nu,n}}   / \vec\cl C_{\nu,n,a^{\nu,n}}^c$.  By compactness, there exists a subsequence $n_j \to \infty$ and $a^* \in \reals^d$ such that $a_j:=a^{n_j,n_j}\to a^*$.  There exists $s_j<t$ such that $(s_j,t] \subset C_{n_j,n_j,a_j}$ since $t \in \vec\cl C_{n_j,n_j,a_j}   / \vec\cl C_{n_j,n_j,a_j}^c$. As a consequence, for any sequence $t_k \nearrow t$, there exists $y_k \in  \Gamma_{t_k}$ with $y_k \in A \cap (a_{j_k}+n^{-1}_{j_k}\uball) + n^{-1}_{j_k}\uball$ for an appropriate subsequence $j_k \to \infty$. Then $y_k \to a^*$ and $a^* \in \bar A=A$.  Thus $t \in \vec \Gamma^{-1}(A)$.

Now we show $\subseteq$. Take  $t>0$ in  $\vec \Gamma^{-1}(A)$ and $a^* \in A \cap \vec \Gamma_t \cap N \uball$.  Fix $n$ and $\nu$. There exists $a \in \rationals^d \cap N\uball$ such that $a^* \in A \cap (a+n^{-1}\uball)$. Then $t \in  \vec\Gamma^{-1}(A \cap (a+n^{-1}\uball))$ and this yields the existence of $s<t$ such that $(s,t) \subset C_{\nu,n,a}$. Thus $t \in \vec\cl C_{\nu,n,a}   / \vec\cl C_{\nu,n,a}^c$.
\end{proof}






We finish the section with the stochastic setting. For $\Gamma:\Omega\times[0,T]\tos\reals^d$, $\vec\Gamma$ is defined pathwise as above, and for $C\subset \Omega\times [0,T]$, $\vec\cl C$ denotes the pathwise limit points from the left. 

\begin{lemma}\label{lem:variantIV32}
Let $A$ be a $\F \otimes \B([0,T])$-measurable set. Then $\vec \cl A$ is measurable.
\end{lemma}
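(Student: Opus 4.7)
The plan is to realize $\vec\cl A$ as a countable intersection of product-measurable sets built from débuts of $A$ after each rational time. For each $r \in \rationals \cap [0, T)$, define
\[
\sigma_r(\omega) := \inf\{s > r : (\omega, s) \in A\},
\]
with the convention $\inf \emptyset = +\infty$. This is the début of the $\F \otimes \B([0, T])$-measurable set $A \cap (\Omega \times (r, T])$, and since $\F$ is complete, the measurable projection theorem \cite[III.44-45]{dm78} gives
\[
\{\sigma_r < c\} = \pi_\Omega\bigl(A \cap (\Omega \times (r, c))\bigr) \in \F
\]
for every $c > r$, so $\sigma_r$ is $\F$-measurable. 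Consequently the set
\[
H_r := \{(\omega, t) \in \Omega \times [0, T] : t > \sigma_r(\omega)\}
\]
is $\F \otimes \B([0, T])$-measurable.

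I would then establish the identity
\[
\vec\cl A = \bigcap_{r \in \rationals \cap [0, T)} \Bigl( \{(\omega, t) : t \le r\} \cup H_r \Bigr),
\]
from which measurability of $\vec\cl A$ follows at once. For $\subseteq$: if $(\omega, t) \in \vec\cl A$ and $r \in \rationals$ is below $t$, then by definition of $\vec\cl$ the intersection $A_\omega \cap (r, t)$ is nonempty, so $\sigma_r(\omega) < t$. For $\supseteq$: given $(\omega, t)$ in the right-hand side with $t > 0$ and any $s < t$, pick a rational $r \in (s, t)$; since $t > r$ we have $\sigma_r(\omega) < t$, yielding some $s' \in A_\omega$ with $r < s' < t$, and in particular $s' \in A_\omega \cap (s, t)$, so $(\omega, t) \in \vec\cl A$.

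The one non-trivial point is the $\F$-measurability of $\sigma_r$, which is where one genuinely needs the completeness of $\F$ together with the measurable projection theorem. A purely combinatorial or monotone-class argument is ruled out because $\vec\cl$ is not stable under countable monotone unions; for example, if $(A_n)$ enumerates $\rationals \cap [0, 1]$ then $\vec\cl A_n = \emptyset$ for each $n$ while $\vec\cl(\bigcup_n A_n) = (0, 1]$. Hence some appeal to the projection theorem is unavoidable, but once the above countable decomposition along rational thresholds is in place, the rest is elementary.
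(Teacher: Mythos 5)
Your argument is correct and essentially follows the paper's own route: the paper likewise reduces the claim to measurability of the first-entry time of $A$ after a threshold, using the process $D_s(\omega)=\inf\{t>s \mid (t,\omega)\in A\}$ (measurable by the proof of \cite[Theorem IV.32]{dm78}, which again rests on completeness and the projection/section theorems) together with the identity $\vec\cl A=\bigcap_{\nu}\{(s,\omega)\mid D_{(s-\frac{1}{\nu})^+}(\omega)<s\}$. Your variant replaces this two-parameter debut process by the countable family $\sigma_r$, $r\in\rationals\cap[0,T)$, each made $\F$-measurable directly by the projection theorem, so no joint measurability in $(s,\omega)$ is needed; this is a slightly more self-contained packaging of the same idea. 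One small correction: your right-hand side contains the entire slice $\Omega\times\{0\}$ (for every $r\ge 0$ the point $t=0$ lies in $\{t\le r\}$), while $t=0$ is never a left limit point, so the identity should be intersected with $\Omega\times(0,T]$; this is harmless for the measurability conclusion, and your $\supseteq$ argument already implicitly restricts to $t>0$.
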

\begin{proof}
The process $D_s(\omega)=\inf \{t>s \mid (t,\omega) \in A \}$ is measurable; see the proof of \cite[Theorem IV.32]{dm78}. Since
\[
\vec \cl A = \bigcap_{\nu \in  \naturals} \{ (s,\omega) \mid D_{(s - \frac{1}{\nu})^+}(\omega) <s \},
\]
 $\vec \cl A$ is thus measurable.
\end{proof}

\begin{lemma}\label{lem:vecGpredictability}
Let $\Gamma:\Omega\times[0,T]\tos\reals^d$ be a measurable mapping. Then $\vec\Gamma$ is measurable.  If $\Gamma$ is progressively measurable, then $\vec \Gamma$ is predictable.
\end{lemma}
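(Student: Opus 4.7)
The plan is to apply Lemma~\ref{lem:vecCl} to reduce the statement to a property of the operator $\vec\cl$ on subsets of $\Omega\times[0,T]$. Since $\vec\Gamma_t$ is defined as a set-theoretic liminf, it is closed-valued, so both measurability and predictability of $\vec\Gamma$ can be checked on inverse images of closed sets (e.g., by \cite[Theorem~14.3]{rw98}). Lemma~\ref{lem:vecCl} expresses each such preimage $\vec\Gamma^{-1}(A)$ as a countable Boolean combination of sets $\vec\cl C_{\nu,n,a}$ and $\vec\cl C_{\nu,n,a}^c$, where each $C_{\nu,n,a}$ is the preimage under $\Gamma$ of an open subset of $\reals^d$. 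Measurability (resp.\ progressive measurability) of $\Gamma$ therefore transfers to each $C_{\nu,n,a}$, and it suffices to show that $\vec\cl C$ is measurable when $C$ is measurable, and predictable when $C$ is progressively measurable. The measurability statement is then immediate from Lemma~\ref{lem:variantIV32} after countable Boolean combinations.

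For predictability, the key step is to show that if $C\subseteq\Omega\times[0,T]$ is progressively measurable, then $\vec\cl C$ is predictable. I would write
\[
\vec\cl C=\bigcap_{\nu\in\naturals}E_\nu,\qquad E_\nu:=\{(t,\omega)\mid ((t-1/\nu)^+,t)\cap C_\omega\neq\emptyset\},
\]
and use density of $\rationals$ in $[0,T]$ to rewrite $E_\nu$ via a pair of rational witnesses straddling the uncountable witness. Explicitly, $(t,\omega)\in E_\nu$ if and only if there exist rationals $s<q$ in $[0,T]$ with $(t-1/\nu)^+<s$, $q<t$ and $(s,q)\cap C_\omega\neq\emptyset$, whence
\[
E_\nu=\bigcup_{\substack{s,q\in\rationals\cap[0,T]\\ 0\le s<q<s+1/\nu}}(q,s+1/\nu)\times F_{s,q},\qquad F_{s,q}:=\{\omega\mid (s,q)\cap C_\omega\neq\emptyset\}.
\]
Progressive measurability of $C$ gives $C\cap((s,q)\times\Omega)\in\B([0,q])\otimes\F_q$, so the projection theorem (valid under the usual hypotheses) yields $F_{s,q}\in\F_q$; hence each rectangle $(q,s+1/\nu)\times F_{s,q}$ is predictable. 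The countable union and intersection then give predictability of $\vec\cl C$, and plugging back into Lemma~\ref{lem:vecCl} finishes the proof.

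The main obstacle is precisely this last step: the pathwise definition of $\vec\cl C$ involves an uncountable family of witnesses $s\in((t-1/\nu)^+,t)$ and it is not a priori clear that the resulting set is predictable. The trick of replacing the single uncountable witness $s$ by a rational pair $s<q$ both converts the description into a countable union of cells and, crucially, makes the $\omega$-section $\F_q$-measurable (rather than only $\F$-measurable) thanks to progressive measurability and the projection theorem; this is where the usual conditions on the filtration enter. Once this is in place, the reduction via Lemma~\ref{lem:vecCl} upgrades the result to $\vec\Gamma$.
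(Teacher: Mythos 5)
Your proof is correct, and its overall skeleton is the same as the paper's: reduce via Lemma~\ref{lem:vecCl} to showing that $\vec\cl C$ is measurable (resp.\ predictable) when $C$ is measurable (resp.\ progressively measurable), then conclude with \cite[Theorem 14.3]{rw98} since $\vec\Gamma$ is closed-valued. The difference lies in how the predictability step is handled: the paper simply cites \cite[Theorem IV.89]{dm78} for the fact that the left-accumulation set of a progressively measurable set is predictable, whereas you prove this from scratch. Your representation $\vec\cl C=\bigcap_\nu E_\nu$ is exactly the one underlying the paper's Lemma~\ref{lem:variantIV32} (there phrased via the d\'ebut-type process $D_s$), and your rewriting of $E_\nu$ as a countable union of cells $(q,s+1/\nu)\times F_{s,q}$ with rational endpoints is correct: the inclusion $\supseteq$ holds because $t<s+1/\nu$ and $s\ge 0$ force $(s,q)\subset((t-1/\nu)^+,t)$, and $\subseteq$ follows by squeezing a rational pair around a witness. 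The measurable projection step does give $F_{s,q}\in\F_q$ because the usual hypotheses make $(\Omega,\F_q,P)$ complete, and a set $(q,r)\times F$ with $F\in\F_q$ is predictable (write it as $\bigcup_n F\times(q,r-1/n]$). Complements are covered since progressively measurable sets form a $\sigma$-algebra, so $\vec\cl C_{\nu,n,a}^c$ is handled by the same argument. What your route buys is a self-contained, elementary proof of the Dellacherie--Meyer ingredient, making explicit where completeness of the filtration enters; what the paper's citation buys is brevity. Both yield the same lemma.
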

\begin{proof}
For a closed set $A \subset \reals^d$,  Lemma~\ref{lem:vecCl} gives 
\[
\vec\Gamma^{-1}(A)= \bigcup_{N \in \naturals} \bigcap_{\nu \in \naturals} \bigcap_{n \in \naturals}  \bigcup_{a \in \rationals^d \cap N \uball} \left[ \vec\cl C_{\nu,n,a}   / \vec\cl C_{\nu,n,a}^c\right],
\]
where the sets on the right hand side are measurable by Lemma \ref{lem:variantIV32}. Hence, $\vec \Gamma^{-1}(A)$ is a measurable set. Then, $\vec \Gamma$ is a measurable mapping  by \cite[Theorem 14.3 (b)]{rw98}. When $\Gamma$ is progressively measurable, the sets on the right are predictable by \cite[Theorem IV.89]{dm78}.
%
\end{proof}

Combining results of this section, we get sufficient conditions for the predictability assumption in Theorem~\ref{thm:cifcpcor}.
\begin{theorem}\label{thm:vectilde}
Let $h$ be an optional normal integrand and $S_t(\omega)=\cl\dom h_t(\omega)$. Then the set-valued mapping defined by
\[
\tilde S_t(\omega)=\cl\{y_{t-}\mid y\in D(S)(\omega)\}
\] 
is predictable and coincides with $\vec S$ under either of the following conditions:
\begin{enumerate}
\item $S$ is $\cadlag$,
\item $S$ and $\vec S$ are solid.
\end{enumerate}
\end{theorem}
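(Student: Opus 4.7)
The strategy is to reduce the theorem to (a) a pathwise application of Theorems~\ref{thm:cadlagGamma} and \ref{thm:solidGamma}, giving the identification $\tilde S = \vec S$ under each of the two stated conditions, and (b) an application of Lemma~\ref{lem:vecGpredictability} to obtain the predictability of $\vec S$ directly from the optional structure of $S$. Once these two pieces are in hand, predictability of $\tilde S$ follows immediately from $\tilde S = \vec S$ together with the predictability of $\vec S$.

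For the measurability part, I observe that since $h$ is an optional normal integrand, its epigraph $\epi h$ is optional and closed convex-valued, whence its projection $\dom h$ is an optional set and the pathwise closure $S = \cl\dom h$ is an optional closed convex-valued mapping. As optional implies progressively measurable, Lemma~\ref{lem:vecGpredictability} yields predictability of $\vec S$. The only subtle point here is the measurability of the projection, which is standard for closed-valued optional mappings via the measurable projection theorem on $\Omega\times[0,T]$ equipped with the optional $\sigma$-algebra.

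To establish $\tilde S_t(\omega) = \vec S_t(\omega)$ pathwise, I fix $\omega$ and split into cases. Under condition~(1), $S(\omega)$ is nonempty closed convex-valued and \cadlag, so Theorem~\ref{thm:cadlagGamma} applies directly and yields $\vec S_t(\omega) = \cl\{y_{t-}\mid y \in D(S(\omega))\} = \tilde S_t(\omega)$. Under condition~(2), the solidity of both $S(\omega)$ and $\vec S(\omega)$, combined with right-\emph{isc} of $S(\omega)$ inherited from the Michael-type representation $S_t = \cl\{y_t \mid y \in D(S)\}$ built into Assumption~\ref{ass:clS} of Section~\ref{sec:main}, brings $S(\omega)$ into the framework of Theorem~\ref{thm:solidGamma}, which yields the same identity. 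The main obstacle in the argument is precisely this last point: solidity of $S(\omega)$ and $\vec S(\omega)$ alone does \emph{not} suffice to invoke Theorem~\ref{thm:solidGamma}, since the left-sided Michael representation of $\vec S$ is derived there under the right-\emph{isc} assumption. In the intended application to the predictability hypothesis of Theorem~\ref{thm:cifcpcor}, however, the ambient Assumption~\ref{ass:clS} is in force and supplies exactly this property, because under solidity the Michael representation $S_t = \cl\{y_t\mid y\in D(S)\}$ is equivalent to right-\emph{isc} by Theorem~\ref{thm:solidGamma}.
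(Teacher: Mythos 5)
Your proposal follows the paper's own proof essentially verbatim: Theorems~\ref{thm:cadlagGamma} and \ref{thm:solidGamma} give $\tilde S=\vec S$ pathwise under conditions (1) and (2) respectively, and Lemma~\ref{lem:vecGpredictability} (with optional $\Rightarrow$ progressively measurable) gives predictability of $\vec S$. Your caveat that under condition (2) solidity alone does not activate the left-sided representation in Theorem~\ref{thm:solidGamma} and that right-inner semicontinuity (equivalently the Michael representation of Assumption~\ref{ass:clS}) must be supplied is accurate --- the paper's one-line proof passes over this silently, and in the intended application (Theorem~\ref{thm:cifcpcor}, where Assumption~\ref{ass:ifocp} is in force) the missing hypothesis is available exactly as you describe, so your reading matches the paper's intent.
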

\begin{proof}
By Theorems~\ref{thm:cadlagGamma} and \ref{thm:solidGamma}, $\tilde S=\vec S$ under either condition, so the claim follows from Lemma~\ref{lem:vecGpredictability}.
\end{proof}


\section{Applications}\label{sec:applications}
In this section we demonstrate how Corollary~\ref{cor:S} together with Theorem~\ref{thm:vectilde} lead to well-known models in mathematical finance. The article \cite{pp18c} applies our results to optimal stopping while partial hedging of American options will be studied in a forthcoming article by the authors. Further applications to finance and to singular stochastic control  will be presented elsewhere. We assume throughout the section the conditions of Theorem \ref{thm:cifcpcor} for $\D$. That is, $\D$, is a solid max-stable space of adapted $\cadlag$ processes of class $(D)$ containing $\D^\infty$. 

 For a stochastic process $b$, 
\[
\vec b_{t}:=\limsup_{s\upuparrows t} b_s
\]
is its {\em left upper semicontinuous regularization}. A process is {\em left-usc} if $b \ge \vec{b}$. By \cite[Theorem~IV.90]{dm78}, $\vec b$ is predictable whenever $b$ is optional. For a \cadlag\ $b$, $\vec b$ is \caglad\ and $\vec b=b_-$. Likewise, $b$ is  said to be {\em right-usc} if $b\ge \cev b$, where 
\[
\cev b_{t}:=\limsup_{s\downdownarrows t} b_s
\]
is the {\em right-upper semicontinuous regularization} of $b$.  These regularizations appear in the context of optimal stopping as properties of reward processes, e.g., in \cite{Bismutskallitempsarret,elk81}, or more recently in and \cite{bb18a,pp18c}.

\begin{example}\label{ex:1}
Let $b$ be optional and right-usc dominated by some $\check y\in\D$ and 
\begin{align*}
S_t(\omega) &:=\{y\in\reals\mid y\ge b_t(\omega)\}.
\end{align*} 
Then $\D(S)$ is nonempty closed convex with
\[
\sigma_{\D(S)}(u,\tilde u)= E\left[\int b du+\int \vec b d\tilde u\right]+\delta_{\hat \M_-}(u,\tilde u).
\]
Moreover, $\sigma_{\D(S)}$ is the lower semicontinuous hull of
\[
(u,\tilde u)\mapsto E\int b du+\delta_{\M_-\times\{0\}}(u,\tilde u).
\]
\end{example}
\begin{proof}
Let
\[
\tilde S_t(\omega) :=\{y\in\reals\mid y\ge \vec b_t(\omega)\}.
\]	
The result follows from Corollary~\ref{cor:S} once we verify that $\tilde S_t =\cl\{ y_{t-}\mid y\in D(S)\}$. In the following argument, $\omega$ is fixed.

	
If $y\in D(S)$, then  $ y_- =\vec y \ge \vec b$, so $\tilde S_t \supseteq \cl\{ y_{t-}\mid y\in D(S)\}$. To prove the converse,  it suffices to show that $\bar y:=\vec b_{\bar t} \in \cl\{ y_{\bar t-}\mid y\in D(S)\}$ for a given $\bar t\in (0,T]$. Fix $\epsilon>0$. Since $\vec b$ is left-usc, the set $\{s\mid \vec b_s < \bar y +\epsilon\}$ contains  a $\tau_l$-neighborhood of $\bar t$. Thus $b_s < \bar y + \epsilon$ for all $s \in [u, \bar t)$ for some $u < \bar t$.   We have $z \ge b$ and $z_{\bar t-}= \vec b_{\bar t}+\epsilon$, where 
\[
z_t := (\bar y + \epsilon)\one_{[u,\bar t)}(t) + \check y_t\one_{[u,\bar t)^C}(t).
\]
Since $\epsilon>0$ was arbitrary, $\vec b_{\bar t}\in \cl\{y_{t-}\mid y\in D(S)\}$.
\end{proof}

Let $a$ be a stochastic process and $\undervec a_{t}:=\liminf_{s\upuparrows t} a_s$.  By construction, $\undervec a$ is left-lsc. By \cite[Theorem~IV.90]{dm78}, $\undervec a$ is predictable whenever $a$ is predictable. In the next example, the set-valued mapping $S$  describes {\em bid-ask spreads} in proportional transaction cost models; see \cite{cpsy18} and references therein. Our example allows for general bid and ask prices given by right-usc and right-lsc processes. 

\begin{example}\label{ex:2}[Bid-ask spreads]
Let $b$ be optional and right-usc and $a$ be optional and right-lsc such that there exists $\bar y\in\D$ with $b<\bar y<a$ and $\vec b < \bar y_{-}  < \undervec a$ a.s.e. Let
\begin{align*}
S_t(\omega)&:=\{y\in\reals\mid b_t(\omega)\le y \le a_t(\omega)\}
\end{align*} 
Then $\D(S)$ is nonempty closed convex with 
\[
\sigma_{\D(S)}(u,\tilde u)= E\left[\int a du^+-\int b du^-+\int \vec a d\tilde u^- - \int \undervec bd\tilde u^-\right]. 
\]
Moreover, $\sigma_{\D(S)}$ is the lower semicontinuous hull of
\[
(u,\tilde u)\mapsto E\left[\int a du^+ - \int b du^-\right]+\delta_{\{0\}}(\tilde u).
\]
\end{example}
\begin{proof}
Let
\[
\tilde S_t(\omega):=\{y\in\reals\mid \vec b_t(\omega)\le y \le \undervec a_t(\omega)\}.
\]
The result follows from Corollary~\ref{cor:S} provided that $\tilde S_t=\cl\{y_{t-}\mid y\in D(S)\}$. To this end,  one may proceed as in Example~\ref{ex:1} using $\bar y$ as a bound.
%
\end{proof}
 
In the next example, $\C$ describes the set of {\em self-financing portfolio processes} in a currency market of $d$ different currencies, and the martingales $y\in \D$ with $y\in\D(S)$ and $y_-\in\D_l(\tilde S)$ are {\em consistent price systems}. We refer to \cite{ks9} for a detailed treatment of the subject. The claims in the example follow immediately from Theorem~\ref{thm:vectilde} and Corollary~\ref{cor:S}.
\begin{example}[Currency markets]
Assume that $S$ is an optional right-isc solid convex cone-valued mapping and that $\vec S$ is solid. Then $\D(S)$ is a closed convex cone and its polar cone is
\[
\C=\{(u,\tilde u)\in \hat \M\mid (du/d|u|)_t\in S^*_t, (d\tilde u/d|\tilde u|)_t\in \vec S^*_t\},
\]
where $S_t^*(\omega)=\{x\mid x\cdot y \le 0\ \forall\ y\in S_t(\omega)\}$.
\end{example}
We finish this section by showing that the assumptions in the above currency market model are satisfied by the general model in \cite[Section 3.6.6]{ks9}.
 \begin{example}[Campi-Schachermayer model]
Let $G$ (resp. $\tilde G$) be an optional (resp. predictable) closed convex cone-valued mapping. We assume
\begin{itemize}
\item ``Efficient friction:'' $G_t\cap(-G_t)=\{0\}$ and $\tilde G_t\cap(- \tilde G_t)=\{0\}$ for all $t$;
\item ``Regularity hypotheses'': $G_{t,t+}=G_t$ and $G_{t-,t}=\tilde G_{t}$, where
\begin{align*}
G_{s,t}(\omega) &:=\cl \cone \{G_r(\omega)\mid r\in[s,t)\},\\
G_{s,t+}(\omega)&:=\bigcap_{\epsilon>0} G_{s,t+\epsilon}(\omega),\\
G_{s-,t}(\omega)&:=\bigcap_{\epsilon>0} G_{s-\epsilon,t}(\omega),
\end{align*}
and the last is defined as $G_{0,t}(\omega)$ for $s=0$.
\end{itemize}
 Then $S=G^*$ is right-isc solid-valued and $\vec S=\tilde G^*$ is solid.
\end{example}
\begin{proof}
The regularity assumptions $G_{t,t+}=G_t$ and $G_{t-,t}=\tilde G_{t}$ imply that $G$ is right-osc and that $\tilde G$ is left-osc; we show the latter, the former being simpler. Let $t^\nu\nearrow t$, $y^\nu\in \tilde G_{t^\nu}$ with $y^\nu\to y$. We need to show $y\in \tilde G_t$ to which end we may assume that $t^\nu<t$ for each $\nu$, otherwise the claim is trivial. For any $\epsilon>0$, $t^\nu\in(t-\epsilon,t)$ for $\nu$ large enough. Since $y_{t^\nu}\in \tilde G_{t^\nu}=G_{t^\nu-,t^\nu}$, there are sequences $t_k^\nu\nearrow t^\nu$ and $y_{t^\nu_k}\to y_{t^\nu}$ with $t_k^\nu>t-\epsilon$ and $y_{t^\nu_k}\in G_{t^\nu_k}$. By diagonalization, there exists $k(\nu)$ such that $t^\nu_{k(\nu)}\nearrow t$ and $y_{t^\nu_{k(\nu)}}\to y$. Thus $y\in G_{t-\epsilon,t}$ and, since $\epsilon>0$ was arbitrary, $y\in G_{t-,t}=\tilde G_{t}$.

 By \cite[Corollary 11.35]{rw98}, $G^*$ and $\tilde G^*$ are thus right- and left-isc, respectively while $G_{t-,t}=\tilde G_{t}$ then means that $\tilde G^*_t =\vec G^*_t$. By \cite[Corollary 13.4.2]{roc70a}, a convex cone $K$ is solid if and only if $K\cap(-K)=\{0\}$, so the efficient friction assumption implies that $G^*$ and $\tilde G^*$ are solid-valued.
\end{proof}

\section{Appendix}
The following extends \cite[Lemma~4.2]{pp18a} that was formulated for bounded $w$.

\begin{lemma}[Jensen's inequality]\label{lem:jin}
Assume that $h$ is an optional convex normal integrand, $\mu$ is an optional nonnegative  random measure,
\[
h(x)\ge x\cdot v-\alpha
\] 
for some optional $v$ and nonnegative $\alpha$ such that $\int |v|d\mu$ and $\int\alpha d\mu$ are integrable, and that $w$ is a raw  measurable process with $E\int |w||v|d\mu<\infty$. If $w$ has an optional projection, then
\[
EI_h(w)\ge EI_h(\op w).
\]
If $h$, $\mu$ and $v$ are predictable and $w$ has a predictable projection, then
\[
EI_h(w)\ge EI_h(\pp w).
\]
\end{lemma}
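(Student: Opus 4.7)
The plan is to reduce to a nonnegative integrand via the affine lower bound and then combine the pull-out identity characterizing the optional measure with pointwise conditional Jensen. I may assume $EI_h(w)<\infty$, as otherwise the inequality is trivial.

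First, set $g(\omega,t,x) := h(\omega,t,x) - v(\omega,t)\cdot x + \alpha(\omega,t)$, which is an optional nonnegative convex normal integrand by the lower bound hypothesis. Because $v$ and $\mu$ are optional and $v\cdot w$ is absolutely $\mu$-integrable, the identity defining optionality of $\mu$ extends from bounded processes to absolutely $\mu$-integrable measurable processes (by splitting into positive and negative parts and monotone approximation), so $E\int v\cdot w\, d\mu = E\int v\cdot \op w\, d\mu$. Combining with the trivial identity for $\alpha$ yields $EI_g(w) - EI_g(\op w) = EI_h(w) - EI_h(\op w)$, so it suffices to prove $EI_g(w)\ge EI_g(\op w)$.

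Next, set $F:=g(w)\ge 0$, a raw measurable process whose optional projection $\op F$ exists (e.g.\ \cite[VI.43]{dm82}). I would extend the optional pull-out identity from bounded to nonnegative measurable processes by truncation: applying the bounded case to $F\wedge n$, using $\op{(F\wedge n)}_\tau = E[(F\wedge n)_\tau \mid \F_\tau] \uparrow E[F_\tau \mid \F_\tau] = \op F_\tau$ at stopping times together with the optional section theorem to deduce $\op{(F\wedge n)}\uparrow \op F$ outside an evanescent set, and then monotone convergence under $E\int\cdot\, d\mu$ on both sides, one obtains $E\int F\, d\mu = E\int \op F\, d\mu$.

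Finally, apply conditional Jensen pointwise at stopping times. Since $h$ is an optional convex normal integrand, $h_\tau$ is an $\F_\tau$-convex normal integrand for every stopping time $\tau$ (cf.\ the discussion after Theorem~2 in \cite{kp17}), and hence so is $g_\tau$. Conditional Jensen therefore gives
\[
(\op F)_\tau = E[g_\tau(w_\tau)\mid \F_\tau] \ge g_\tau(E[w_\tau\mid \F_\tau]) = g(\op w)_\tau
\]
almost surely, and the optional section theorem promotes this to $\op F \ge g(\op w)$ outside an evanescent set. Integrating and combining with the previous step gives $EI_g(w) = E\int F\, d\mu = E\int \op F\, d\mu \ge E\int g(\op w)\, d\mu = EI_g(\op w)$, completing the optional case. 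The predictable variant follows by the same scheme with $\op$ replaced by $\pp$, $\F_\tau$ by $\F_{\tau-}$ at predictable times $\tau$, and invoking the predictable section theorem. I expect the main subtlety to be the measurability required by conditional Jensen: one must verify that $g_\tau$ is an $\F_\tau$- (respectively $\F_{\tau-}$-)convex normal integrand so that Jensen applies pathwise in $\omega$.
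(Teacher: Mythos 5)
Your argument is essentially correct, but it is a genuinely different route from the paper's. The paper never invokes a pointwise conditional Jensen inequality: it normalizes by $\beta=\op{(1/(1+\int d\mu))}$ to pass to a bounded optional measure $\hat\eta$ on $\Omega\times[0,T]$, represents $EI_h(\op w)$ via the interchange rule \cite[Theorem 14.60]{rw98} as a supremum of affine functionals $E\int[\op w\cdot v-\hat h^*(v)]d\hat\mu$ over \emph{optional} test processes $v$, observes that each such functional takes the same value at $w$ and $\op w$ (this is exactly the defining identity of an optional measure), and then enlarges the class of test processes to all $\F\otimes\B([0,T])$-measurable ones and applies the interchange rule a second time to recover $EI_h(w)$; the lower bound $h(x)\ge x\cdot v-\alpha$ only serves to make $E\int\hat h^*(v/\beta)d\hat\mu$ finite so that the dual representations are valid. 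Your route is the direct probabilistic one: reduce to a nonnegative integrand, use the projection identity $E\int F\,d\mu=E\int\op F\,d\mu$ for nonnegative raw $F$, and apply conditional Jensen for the normal integrand at stopping times together with the section theorem. What the paper's approach buys is that the only nontrivial ingredient is the (scalar) interchange rule, which is already in its toolkit; what your approach buys is a shorter, more transparent argument, but its crux --- the inequality $E[g_\tau(w_\tau)\mid\F_\tau]\ge g_\tau(E[w_\tau\mid\F_\tau])$ for an $\F_\tau$-normal integrand under generalized conditional expectations --- is precisely the kind of statement the duality argument is designed to avoid, and you should either prove it (e.g.\ via a Castaing-type representation of $g_\tau$ by countably many $\F_\tau$-measurable affine minorants, localizing so the coefficients are bounded) or cite it explicitly (it is in the spirit of \cite[Theorem 9]{kp17}); as you note, the measurability of $g_\tau$ with respect to $\F_\tau$ (resp.\ $\F_{\tau-}$) must also be justified, which the paper does elsewhere by referring to the discussion after Theorem~2 in \cite{kp17}.

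One small repair is needed in your reduction: in the lemma (and in Assumption~\ref{ass:ifocp}) $\alpha$ is only a nonnegative \emph{measurable} process, so $g=h-v\cdot x+\alpha$ need not be an optional integrand. Replace $\alpha$ by $(h^*(v))^+$, which is optional, satisfies $(h^*(v))^+\le\alpha$ (hence is $\mu$-integrable in expectation) and still gives $h(x)\ge x\cdot v-(h^*(v))^+$; with this choice your $g$ is optional and nonnegative and the rest of your argument, including the finiteness of the correction terms via $|v||\op w|\le\op{(|v||w|)}$, goes through.
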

\begin{proof}
Let $\hat\mu\ll \mu$ be defined by $d\hat\mu/d\mu=\beta:=\op(1/(1+\int d\mu))$. Then  $\hat \eta(A)=E\int\one_A d\hat\mu$ defines  an optional bounded measure  on $\Omega\times[0,T]$. Moreover, $EI_h(w)=E\int \hat h(w)d\hat\mu$, where $\hat h(w)=h(w)/\beta$ is an optional convex normal integrand. We have
\[
\hat h^*(v)=h^*(\beta v)/\beta,
\]
so the lower bound implies that $E\int \hat h^*(v/\beta)d\hat\mu$ is finite. Thus we may apply the interchange of integration and minimization on $(\Omega\times[0,T],\O,\hat\eta)$ and on $(\Omega\times[0,T],\F\otimes\B([0,T]),\hat\eta)$ (see \cite[Theorem 14.60]{rw98}) to get
\begin{align*}
EI_h(\op w) &=E\int \hat h(\op w)d\hat\mu\\
 &=\sup_{v\in \LL^1(\Omega\times[0,T],\O,\hat\eta)}E\int[\op w\cdot v-\hat h^*(v)]d\hat\mu\\
&=\sup_{v\in \LL^1(\Omega\times[0,T],\O,\hat\eta)}E\int[w\cdot v-\hat h^*(v)]d\hat\mu\\
&\le \sup_{v\in \LL^1(\Omega\times[0,T],\F\otimes\B([0,T]),\hat\eta)}E\int[w\cdot v-\hat h^*(v)]d\hat\mu\\
&=E\int \hat h(w)d\hat\mu\\
&=EI_h(w).
\end{align*}
The predictable case is proved similarly.
\end{proof}

The next lemma was used in the proof of Theorem \ref{thm:ircp}.

\begin{lemma}\label{lablemmaAlternativeannouncing}
Let $\tau$ be a predictable time such that $[ \tau ]  \subset ( m,m+1 ]$ for a given $m \in \{0,1,\ldots,\}$. Let $\{\epsilon_n\}_{n \in  \nn}$ be a decreasing sequence of positive real numbers converging  to zero.  Then, there exists a nondecreasing sequence of  stopping times $\{\sigma^n\}_{n \in \nn}$ converging to $\tau$ with  $[ \sigma^n ] \subset ( m,m+1 ) $ for every $n$, and with the following properties: $\{\tau <\infty\} \subset \{\sigma^n < \tau\}$   and
\begin{equation}\label{labEqFastlabelingAnnSeqInfiniteonInfinite}
\left| \rpr(\{\sigma^{n} = \infty\}) -\rpr(\{\tau=\infty\})\right|  \leq \epsilon_{n}.
\end{equation}
\end{lemma}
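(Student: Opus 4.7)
The plan is to start from an announcing sequence of the predictable time $\tau$ and modify it through a pathwise index shift followed by a truncation so as to enforce $[\sigma^n] \subset (m,m+1)$ while preserving monotonicity. Let $(\rho^k)_{k \in \nn}$ be an announcing sequence for $\tau$, i.e.\ stopping times with $\rho^k \nearrow \tau$ and $\rho^k < \tau$ on $\{\tau > 0\}$. Since $[\tau] \subset (m,m+1]$ forces $\tau > m$ everywhere, the sequence $\rho^k$ eventually exceeds $m$ pathwise almost surely.

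Define the almost surely finite, $\F_m$-measurable random index $K(\omega) := \inf\{k \geq 1 \mid \rho^k(\omega) > m\}$ and set $\sigma^n := \rho^{K+n}$. Then $\sigma^n$ is a stopping time, as $\{\sigma^n \leq t\} = \bigcup_k \{K = k\} \cap \{\rho^{k+n} \leq t\}$ with each $\{K = k\} \in \F_m$; it is pathwise nondecreasing in $n$, since $K(\omega)$ is a fixed integer for each $\omega$; and it satisfies $\sigma^n > m$ everywhere, $\sigma^n < \tau$ on $\{\tau > 0\}$, and $\sigma^n \to \tau$ almost surely. To enforce the upper endpoint, truncate by setting
\[
\tilde \sigma^n := \sigma^n \one_{\{\sigma^n < m+1\}} + \infty \cdot \one_{\{\sigma^n \geq m+1\}}.
\]
This is again a stopping time, as $\{\tilde \sigma^n \leq t\} = \{\sigma^n \leq t\}$ for $t < m+1$ and $\{\tilde\sigma^n \leq t\} = \{\sigma^n < m+1\}$ for $t \geq m+1$, both in $\F_t$. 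Monotonicity is preserved: the event $\{\sigma^n \geq m+1\}$ is nondecreasing in $n$, so once $\tilde \sigma^n = \infty$ it stays so. On $\{\tau < \infty\}$ we have $\sigma^n < \tau \leq m+1$, hence $\tilde \sigma^n = \sigma^n \nearrow \tau$ and $\tilde\sigma^n < \tau$; on $\{\tau = \infty\}$, $\sigma^n \to \infty$, so $\tilde \sigma^n = \infty$ eventually.

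In particular, $\{\tilde\sigma^n = \infty\} \subset \{\tau = \infty\}$ with $\{\tilde\sigma^n = \infty\} \uparrow \{\tau = \infty\}$ up to a null set, so $P(\tilde\sigma^n = \infty) \nearrow P(\tau = \infty)$. Extract a subsequence $n(k) \uparrow \infty$ with $P(\tau = \infty) - P(\tilde \sigma^{n(k)} = \infty) \leq \epsilon_k$ and take $(\tilde\sigma^{n(k)})_{k\in\nn}$ as the desired sequence. The main obstacle is preserving monotonicity through the two-sided truncation; the pathwise index shift $K$ handles the lower cutoff (replacing $\rho^k$ by $\infty$ on $\{\rho^k \leq m\}$ would instead break monotonicity on the transition to $\{\rho^k > m\}$), while the upper cutoff is compatible with monotonicity precisely because $\sigma^n \geq m+1$ propagates to all larger indices.
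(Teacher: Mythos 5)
Your proof is correct, and it shares the paper's core mechanism: truncate an announcing sequence of $\tau$ at the level $m+1$ (sending the offending values to $+\infty$), observe that the events $\{\tilde\sigma^n=\infty\}$ increase, up to a null set, to $\{\tau=\infty\}$, and pass to a subsequence to obtain \eqref{labEqFastlabelingAnnSeqInfiniteonInfinite}. Where you genuinely deviate is the lower endpoint constraint $\sigma^n>m$: the paper gets it in one stroke by citing \cite[Theorem IV.11]{Dellacherie1972} for an announcing sequence whose graphs already lie in $\llparenthesis m,m+1\rrparenthesis$, so that only the upper restriction $\tau^n_{\{\tau^n\le m+1\}}$ and the measure-continuity argument remain, whereas you start from an arbitrary announcing sequence $(\rho^k)$ and enforce $\sigma^n>m$ via the pathwise index shift $K=\inf\{k\mid\rho^k>m\}$. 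That device is legitimate: $\{K=k\}\in\F_m$, so $\{\rho^{K+n}\le t\}=\bigcup_k\left(\{K=k\}\cap\{\rho^{k+n}\le t\}\right)\in\F_t$ for $t\ge m$, while for $t<m$ the event is empty because $\rho^{K+n}\ge\rho^K>m$; and, as you rightly stress, the shift (unlike killing $\rho^k$ on $\{\rho^k\le m\}$) preserves monotonicity, while the upper truncation is compatible with monotonicity because $\{\sigma^n\ge m+1\}$ is nondecreasing in $n$. The inclusion $\{\tilde\sigma^n=\infty\}\subset\{\tau=\infty\}$ follows, as in the paper, from $\sigma^n<\tau\le m+1$ on $\{\tau<\infty\}$, and the relabeled subsequence retains all the required properties. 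What your route buys is self-containedness: it uses only the defining property of predictable times (existence of some announcing sequence) plus elementary manipulations, at the cost of a slightly longer construction; the paper's route is shorter but leans on the cited refinement of the announcing sequence.
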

\begin{proof}
By \cite[Theorem IV.11]{Dellacherie1972}, there exists an announcing sequence $\{ \tau^n \}_{n \in \nn}$ of $\tau$ with graphs on $\llparenthesis m,m+1 \rrparenthesis$ and  $\{\tau <\infty\}\subset\{\tau^n < \tau\}$. Note that 
\[
\{\tau=\infty\}= \bigcup_{n \in \nn} \{\tau^n  >m+1 \}.
\]
Then, for any  sequence $\{\epsilon_n\}_{n \in \nn}$ decreasing to zero, we might assume, by taking a subsequence if necessary, that
\begin{equation}\label{labEqFastlabelingAnnSeq}
\left| \rpr(\{\tau^{n} > m+1\})-\rpr(\{\tau=\infty\})\right|  < \epsilon_{n}.
\end{equation}
For $n \in \nn$,  let $\sigma^n$ be the  stopping time $\tau^n_{\{\tau^n   \leq  m+1 \}}$. The sequence $\{\sigma^n \}_{n\in \nn}$ is non-decreasing and converges to $\tau$. Moreover
\[
\rpr(\sigma^n=\infty)=\rpr(\tau^n > m+1)   \geq \rpr(\tau = \infty)- \epsilon_n,
\]
which proves the claim.
\end{proof}
\textbf{Acknowledgements} Erick Trevi\~no gratefully acknowledges financial support from Alexander von Humboldt Foundation  while visiting the Technical University of Berlin.

\bibliographystyle{plain}
\bibliography{sp}

\end{document}